\numberwithin{equation}{section}
\theoremstyle{plain}
\newtheorem{theorem}{Theorem}[section]
\newtheorem{proposition}[theorem]{Proposition}
\newtheorem{lemma}[theorem]{Lemma}
\newtheorem{corollary}[theorem]{Corollary}
\theoremstyle{definition}
\newtheorem{definition}[theorem]{Definition}
\newtheorem{remark}[theorem]{Remark}
\newtheorem{example}[theorem]{Example}
\newtheorem{notation}[theorem]{Notation}
\newlist{case}{enumerate}{3}
\setlist[case, 1]{label={Case (\arabic*).},leftmargin=*}
\title[Dimensions of automorphism group schemes of $F$-crystals]{Dimensions of automorphism group schemes of finite level truncations of $F$-cyclic $F$-crystals}
\author[Z. Ding]{Zeyu Ding}
\address{Department of Computer Science and Engineering, Pennsylvania State University, University Park, PA 16802}
\email{zyding@cse.psu.edu}
\author[X. Xiao]{Xiao Xiao}
\address{Mathematics Department, Utica College, 1600 Burrstone Road, Utica, NY 13502}
\email{xixiao@utica.edu}
\begin{document}
\begin{abstract}
Let $\mathcal{M}_{\pi}$ be an $F$-cyclic $F$-crystal $\mathcal{M}_{\pi}$ over an algebraically closed field defined by a permutation $\pi$ and a set of prescribed Hodge slopes. We prove combinatorial formulas for the dimension $\gamma_{\mathcal{M}_{\pi}}(m)$ of the automorphism group scheme of $\mathcal{M}_{\pi}$ at finite level $m$ and the number of connected components of the endomorphism group scheme of $\mathcal{M}_{\pi}$ at finite level $m$. As an application, we show that if $\mathcal{M}_{\pi}$ is a nonordinary Dieudonn\'e module defined by a cycle $\pi$, then $\gamma_{\mathcal{M}_{\pi}}(m+1) - \gamma_{\mathcal{M}_{\pi}}(m) < \gamma_{\mathcal{M}_{\pi}}(m) - \gamma_{\mathcal{M}_{\pi}}(m-1)$ for all $1 \leq m \leq n_{\mathcal{M}_{\pi}}$, where $n_{\mathcal{M}_{\pi}}$ is the isomorphism number of $\mathcal{M}_{\pi}$.
\end{abstract}

\maketitle

\section{Introduction}
Fix an integer $m \geq 1$, a prime number $p$ and an algebraically closed field $k$ of characteristic $p$ throughout this paper. Let $D$ be a $p$-divisible group over $k$ of codimension $c$ and dimension $d$. The isomorphism number $n_D$ of $D$ is the smallest nonnegative integer such that for every $p$-divisible group $C$ over $k$ of the same codimension and dimension as $D$, if $C[p^{n_D}]$ is isomorphic to $D[p^{n_D}]$, then $C$ is isomorphic to $D$. For every integer $n \geq 0$, let $\mathbf{Aut}(D[p^n])$ be the smooth affine group scheme over $k$ of automorphisms of $D[p^n]$ and let $\gamma_D(n) = \text{dim}(\mathbf{Aut}(D[p^n]))$. Hence $\gamma_D(0) = 0$. Moreover, $\gamma_D(1) = 0$ if and only if $D$ ordinary (i.e., $D \cong (\mathbb{Q}_p/\mathbb{Z}_p)^c \oplus \mathbf{\mu}_{p^{\infty}}^d$). For every integer $l > 0$, let 
\[\mathcal{S}_D(l) := (\gamma_D(n+l) - \gamma_D(n))_{n \geq 0}\] be an infinite sequence of nonnegative integers. Gabber and Vasiu proved that:

\begin{theorem}{\cite[Theorem 1]{Vasiu:dimensions}} \label{theorem:vasiugabberbasic}
Let $D$ be a nonordinary $p$-divisible group over $k$. For every integer $l > 0$, the sequence $\mathcal{S}_D(l)$ is nonincreasing and we have
\[0 < \gamma_D(1) < \gamma_D(2) < \cdots < \gamma_D(n_D) = \gamma_D(n_D+1) = \cdots.\]
\end{theorem}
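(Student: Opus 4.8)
The plan is to translate everything into semilinear algebra over $W=W(k)$ via Dieudonn\'e theory and then study the tower of truncated automorphism group schemes. Let $(M,F,V)$ be the contravariant Dieudonn\'e module of $D$: a free $W$-module of rank $h=c+d$ with $\sigma$-linear $F$, $\sigma^{-1}$-linear $V$, $FV=VF=p$ and $\dim_k(M/FM)=d$. Via Zink's theory of displays one has, for a $k$-algebra $R$, a functorial identification
\[
\mathbf{Aut}(D[p^m])(R)=\bigl\{\,g\in\mathrm{Aut}_{W_m(R)}\bigl(M/p^mM\otimes_{W_m(k)}W_m(R)\bigr):gF=Fg,\ gV=Vg\,\bigr\},
\]
exhibiting $\mathbf{Aut}(D[p^m])$ as a closed subgroup scheme of $\mathrm{Res}_{W_m(k)/k}\mathrm{GL}_h$; it is smooth (a theorem on truncated Barsotti--Tate group schemes), so $\gamma_D(m)=\dim_k\mathrm{Lie}\,\mathbf{Aut}(D[p^m])$. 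Consider the reduction homomorphisms $r_m\colon\mathbf{Aut}(D[p^{m+1}])\to\mathbf{Aut}(D[p^m])$. For $m\geq 1$ an element of $\ker r_m$ has the form $1+p^mX$ with $X\in\mathrm{End}_k(M/pM)$; the group law is additive (since $p^{2m}=0$ in $W_{m+1}(k)$), and commutation with $F$ and $V$ forces exactly $[X,\bar F]=[X,\bar V]=0$ for the reductions modulo $p$. Thus, for every $m\geq 1$, $\ker r_m$ is the same vector group, of dimension $e:=\dim_k\{X\in\mathrm{End}_k(M/pM):[X,\bar F]=[X,\bar V]=0\}$. By smoothness $\mathbf{Aut}(D[p^{m+1}])\to\mathrm{im}(r_m)$ is a torsor under $\ker r_m$, so $\gamma_D(m+1)=e+\dim\mathrm{im}(r_m)$ and hence
\[
\gamma_D(m+1)-\gamma_D(m)=e-c_m,\qquad c_m:=\mathrm{codim}\bigl(\mathrm{im}(r_m)\subseteq\mathbf{Aut}(D[p^m])\bigr)\geq 0\quad(m\geq 1),
\]
where $c_m$ records the failure of automorphisms of $D[p^m]$ to lift to $D[p^{m+1}]$.

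Granting this, Theorem \ref{theorem:vasiugabberbasic} reduces to three assertions. (i) The sequence $(c_m)_{m\geq 1}$ is nondecreasing and $\gamma_D(1)\geq\gamma_D(2)-\gamma_D(1)$; then $\mathcal{S}_D(1)$ is nonincreasing, and each $\mathcal{S}_D(l)$ is nonincreasing because its $n$-th term is $\sum_{j=0}^{l-1}\mathcal{S}_D(1)_{n+j}$. (ii) $c_m=e$ if and only if $m\geq n_D$, and $c_m<e$ for $1\leq m<n_D$; with $\gamma_D(0)=0$ this yields the strict chain $0<\gamma_D(1)<\cdots<\gamma_D(n_D)$, the stabilization $\gamma_D(n_D)=\gamma_D(n_D+1)=\cdots$, and identifies the stabilization level as exactly $n_D$. (iii) $\gamma_D(1)>0$ for $D$ nonordinary: here $\gamma_D(1)=0$ means $\mathbf{Aut}(D[p])$ is finite, which by the classification of truncated Barsotti--Tate group schemes of level $1$ over $k$ forces $D[p]\cong(\mathbb{Z}/p)^c\oplus\mu_p^d$, and over the algebraically closed $k$ this in turn forces $D\cong(\mathbb{Q}_p/\mathbb{Z}_p)^c\oplus\mu_{p^\infty}^d$ (the connected part of $D$ is then of multiplicative type and the resulting extension by an \'etale $p$-divisible group splits), contrary to hypothesis. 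So (iii) is standard and the content lies in (i) and (ii).

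For (i) the mechanism is that a lifting obstruction at level $m+1$ forces one at level $m$. Using the isogeny $p\colon D\to D$, which identifies $D[p^m]$ with $D[p^{m+1}]/D[p]$, together with the interplay of $F$, $V$ and the reduction maps, one constructs a natural $k$-linear map from the obstruction space governing $c_{m+1}$ (the cokernel of $\mathrm{Lie}(r_{m+1})$) onto the one governing $c_m$, whence $c_m\leq c_{m+1}$; the boundary step $\gamma_D(1)\geq\gamma_D(2)-\gamma_D(1)$ is handled by the same analysis, which moreover yields $\gamma_D(1)=e$. For (ii) one invokes Vasiu's comparison of $\gamma_D$ with the level-$m$ stratification: in the versal deformation $\mathcal{D}$ of $D$ the reduced level-$m$ loci $N_m=\{x:D_x[p^m]\cong D[p^m]\}$ through $D$ satisfy $\dim_D N_m=cd-\gamma_D(m)$ and form a descending chain $N_1\supseteq N_2\supseteq\cdots$ with $N_m=N_{n_D}=\{x:D_x\cong D\}$ for all $m\geq n_D$, directly from the definition of the isomorphism number; this already forces $\gamma_D$ to stabilize at level $n_D$, and the remaining point is the strictness $c_m<e$ (equivalently $\dim_D N_{m+1}<\dim_D N_m$) for $m<n_D$, extracted from the same linear-algebra mechanism together with the finiteness and characterization of $n_D$.

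I expect the crux to be precisely (i) and the sharpness half of (ii): controlling how the lifting-obstruction spaces propagate down the truncation tower so that the codimensions $c_m$ are nondecreasing, and pinning down as $n_D$ the first level at which they become maximal. This is the technical core of the Gabber--Vasiu argument; it is also exactly the quantity that the combinatorial machinery developed in this paper computes explicitly in the $F$-cyclic setting, where the $c_m$ — hence the successive differences $\gamma_{\mathcal{M}_\pi}(m+1)-\gamma_{\mathcal{M}_\pi}(m)$ — can be read off from the permutation $\pi$ and the prescribed Hodge slopes, which is what makes the refined strict-concavity statement of the abstract accessible.
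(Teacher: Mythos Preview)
The paper does not prove this theorem at all: it is quoted as \cite[Theorem 1]{Vasiu:dimensions} and attributed in the surrounding text to Gabber and Vasiu, and no proof or proof sketch appears anywhere in the paper. There is therefore nothing in the paper to compare your proposal against; the result is used purely as a black box to motivate the finer question (strict decrease of $\mathcal{S}^*_D(1)$) that the paper actually addresses.

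As a standalone sketch, your outline points in a reasonable direction but several of the steps you flag as ``standard'' or ``the same analysis'' are exactly where the content of the cited theorem lives. In particular: your identification of $\ker r_m$ ignores the $\sigma$-semilinearity of $F$ (the condition on $X$ is a twisted commutation, not $[X,\bar F]=0$), so the claim that $\ker r_m$ is literally the same vector group for all $m\geq 1$, and that $\gamma_D(1)=e$, does not follow as written; and the asserted surjection between obstruction spaces that would give $c_m\leq c_{m+1}$ is precisely the nontrivial step, not something one ``constructs naturally'' from the isogeny $p$. Your point (ii) also leans on the dimension formula $\dim_D N_m=cd-\gamma_D(m)$ and on strictness of the chain of level strata below $n_D$, both of which are themselves results of the Gabber--Vasiu circle of ideas rather than independent inputs. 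So the proposal is more a restatement of what has to be shown than a proof.
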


For every integer $l>0$, as $\gamma_D(n_D+l) = \gamma_D(n_D+l-1) = \gamma_D(n_D)$ and $\gamma_D(n_D) > \gamma_D(n_D-1)$, we get that
\[0 = \gamma_D(n_D+l) - \gamma_D(n_D) < \gamma_D(n_D+l-1) - \gamma_D(n_D-1),\]
which is the only strictly decreasing part of $\mathcal{S}_D(l)$ guaranteed by Theorem \ref{theorem:vasiugabberbasic}. We want to study whether the finite sequence $\mathcal{S}^*_D(l) := (\gamma_D(n+l) - \gamma_D(n))_{0 \leq n < n_D}$ is strictly decreasing. If $\mathcal{S}^*_D(1)$ is strictly decreasing, then $\mathcal{S}^*_D(l)$ is strictly decreasing for every integer $l \geq 2$ because
\[\gamma_D(n+l) - \gamma_D(n) = \sum_{i=1}^l (\gamma_D(n+i)-\gamma_D(n+i-1)).\]
In this paper, we show that for a certain family of $p$-divisible groups $D_{\pi}$ that are defined by a cycle $\pi$ and a set of prescribed Hodge slopes, the sequence $\mathcal{S}^*_{D_{\pi}}(1)$ is strictly decreasing and hence $\mathcal{S}^*_{D_{\pi}}(l)$ is strictly decreasing for every integer $l >0$.

\subsection{$F$-cyclic $p$-divisible Groups}
Let $W(k)$ be the ring of $p$-typical Witt vectors with coefficients in $k$. Let $W_m(k) = W(k)/(p^m)$ be the ring of truncated $p$-typical Witt vectors of length $m$ with coefficients in $k$. Let $B(k) = W(k)[1/p]$ be the field of fractions of $W(k)$. Let $\sigma$ be the Frobenius automorphism of $k$, $W(k)$, $W_m(k)$ and $B(k)$. 

An $F$-crystal over $k$ is a pair $\mathcal{M} = (M, \varphi)$, where $M$ is a free $W(k)$-module of finite rank $r$ and $\sigma: M \to M$ is a $\sigma$-linear monomorphism. If $pM \subset \varphi(M) \subset M$, then $\mathcal{M}$ is called a (contravariant) Dieudonn\'e module. For every $F$-crystal $\mathcal{M}$, the isomorphism number $n_{\mathcal{M}}$ is the smallest nonnegative integer such that for every $W(k)$-linear automorphism $g$ of $M$, if $g \equiv 1$ modulo $p^{n_{\mathcal{M}}}$, then the $F$-crystal $(M, g\varphi)$ is isomorphic to $\mathcal{M}$; see \cite[Main Theorem A]{Vasiu:CBP} for the existence of $n_{\mathcal{M}}$. Let $\mathbf{Aut}_m(\mathcal{M})$ (resp. $\mathbf{End}_m(\mathcal{M})$) be the smooth affine group scheme over $k$ whose $k$-valued points is the group of automorphisms (resp. group of endomorphisms) of $\mathcal{M}$ modulo $p^m$; see Subsection \ref{subsection:basicsetup} for precise definitions. Let $\gamma_{\mathcal{M}}(m)$ be the dimension of $\mathbf{Aut}_m(\mathcal{M})$ and $\gamma_{\mathcal{M}}(0) = 0$. 

It is well-known that the category of $p$-divisible groups over $k$ is anti-equivalent to the category of Dieudonn\'e modules over $k$.  If $\mathcal{M}$ is the Dieudonn\'e module of some $p$-divisible group $D$, then $n_{\mathcal{M}} = n_{D}$ and $\gamma_{\mathcal{M}}(n) = \gamma_D(n)$ for every integer $n \geq 0$. 

We recall the following definition from \cite[Definition 1.5.1]{Vasiu:reconstructing}.

\begin{definition}
Recall that $c$ and $d$ are nonnegative integers such that $r:=c+d$. Let $B = (v_1, v_2, \dots, v_r)$ be an ordered $W(k)$-basis of $M$ and let $\pi$ be a permutation of the set $I_r := \{1,2,\dots,r\}$. Let $\mathcal{M}_{c, d, B, \pi} = (M, \varphi_{c,d,B, \pi})$ be the Dieudonn\'e module over $k$ with the property that $\varphi_{c,d,B,\pi}(v_i) = v_{\pi(i)}$ if $i \in \{1, \dots, c\}$ and $\varphi_{c,d,B,\pi}(v_i) = pv_{\pi(i)}$ if $i \in \{c+1, \dots, r\}$. A Dieudonn\'e module $\mathcal{M}$ of codimension $c$ and dimension $d$ is said to be \emph{$F$-cyclic} (resp. \emph{$F$-circular}) if there exist a permutation (resp. an $r$-cycle permutation) $\pi$ on $I_r$ and an ordered $W(k)$-basis $B$ of $M$ such that $\mathcal{M}$ is isomorphic to $\mathcal{M}_{c, d, B, \pi}$. A $p$-divisible group $D$ is said to be \emph{$F$-cyclic} (resp. \emph{$F$-circular}) if the Dieudonn\'e module of $D$ is $F$-cyclic (resp. $F$-circular). When $c$, $d$ and $B$ are understood, we let $D_{\pi}$ be the $p$-divisible group of $\mathcal{M}_{\pi} := \mathcal{M}_{c, d, B, \pi}$.
\end{definition}

Kraft's work \cite{Kraft1} on the classification of finite group schemes over $k$ that are annihilated by $p$ implicitly implies that for every $p$-divisible group $D$ over $k$, there exists a permutation $\pi$ such that $D_{\pi}[p] \cong D[p]$. In \cite{Vasiu:modp}, $F$-cyclic $p$-divisible groups are studied using the language of Weyl groups. Vasiu proved that if $\pi_1, \pi_2$ are two permutations on $I_r$, then the $p$-divisible groups $D_{\pi_1}$ and $D_{\pi_2}$ are isomorphic if and only if $D_{\pi_1}[p]$ and $D_{\pi_2}[p]$ are isomorphic; see \cite[1.3 Basic Theorem B (a)]{Vasiu:modp}. 

As an application of the classification of $D$-truncations mod $p$ of the so-called Shimura $F$-crystals over $k$, Vasiu provides a formula for $\gamma_{D_{\pi}}(1)$; see \cite[1.2 Basic Theorem A]{Vasiu:modp}. In this paper, we prove an explicit formula for $\gamma_{\mathcal{M}_{\pi}}(m)$ for all $F$-cyclic $F$-crystals $\mathcal{M}_{\pi}$.

\subsection{Main Results}
Our main results pertain to all $F$-cyclic $F$-crystals (see Definition \ref{definition:fcyclicfcrystal}) but for the sake of simplicity, here we only state their version for $F$-cyclic Dieudonn\'e modules. Let $\mathcal{M}_{\pi}$ be an $F$-cyclic Dieudonn\'e module of rank $r=c+d$, codimension $c$, and dimension $d$. Let $\mathcal{B}_{\pi \times \pi}$ be the set of orbits of $\pi \times \pi$ on $I_r^2$. For every orbit $\mathcal{O} = \{(i_1,j_1),(i_2,j_2), \dots, (i_s,j_s)\}$ in $\mathcal{B}_{\pi \times \pi}$, i.e., $(\pi \times \pi)(i_t,j_t)=(i_{t+1},j_{t+1})$ for $t \in I_{s-1}$ and $(\pi \times \pi)(i_{t+1},j_{t+1})=(i_1,j_1)$, define
\[\epsilon_{\mathcal{O}} = (\epsilon_1, \epsilon_2, \dots, \epsilon_s) := (e_{i_1}-e_{j_1}, e_{i_2}-e_{j_2}, \dots, e_{i_s}-e_{j_s}),\]
where $e_1=e_2 = \cdots = e_c = 0$ and $e_{c+1} = e_{c+2} = \cdots = e_{r} = 1$ are the Hodge slopes of $\mathcal{M}_{\pi}$. Note that $\epsilon_i \in \{0,\pm1\}$ for all $1 \leq i \leq s$. Let $|\mathcal{O}| = s \geq 1$ denote the length of $\mathcal{O}$. For every positive integer $\lambda$, let $a_{\lambda}(\epsilon_{\mathcal{O}})$ be the number of free linear segments of level $\lambda$ in $\epsilon_{\mathcal{O}}$ as introduced in Definition \ref{definition:freelinearsegment}. For every nonnegative integer $\lambda$, let $\mathcal{C}_{\pi}(\lambda) \subset \mathcal{B}_{\pi \times \pi}$ be the set of orbits $\mathcal{O}$ such that $\epsilon_{\mathcal{O}}$ is a circular sequence of level $\lambda$ as introduced in Definition \ref{definition:circularsequencelevel}.

\begin{theorem}[Main Result, Dieudonn\'e Module Case] \label{theorem:mainintro}
Let $\mathcal{M}_{\pi}$ be an $F$-cyclic Dieudonn\'e module over $k$. Using the above notation, for every integer $m \geq 1$, the dimension of $\mathbf{Aut}_m(\mathcal{M}_{\pi})$ is equal to
\[\sum_{\mathcal{O} \in \mathcal{B}_{\pi \times \pi}} \sum_{\lambda=1}^m a_{\lambda}(\epsilon_{\mathcal{O}}),\]
and the number of connected components of $\mathbf{End}_m(\mathcal{M}_{\pi})$ is equal to $p^b$, where
\[b = \sum_{\lambda = 0}^{m-1} \sum_{\mathcal{O} \in \mathcal{C}_{\pi}(\lambda)} (m-\lambda)|\mathcal{O}|.\qedhere\]
\end{theorem}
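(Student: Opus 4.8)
The plan is to reduce both statements to a per-orbit linear-algebra computation indexed by $\mathcal B_{\pi\times\pi}$, and then to sum the dimension contributions (resp.\ multiply the component counts). The starting point is the explicit description of the finite-level group schemes from Subsection~\ref{subsection:basicsetup}: unwinding the condition ``$X$ commutes with $\varphi_\pi$ modulo $p^m$'' for $X=(x_{ij})$ written in the basis $B$, and using $\varphi_\pi(v_i)=p^{e_i}v_{\pi(i)}$, one finds that $\mathbf{End}_m(\mathcal M_\pi)(R)$ is the set of matrices over $W_m(R)$ satisfying $p^{e_{j}}\,x_{\pi(i),\pi(j)}=p^{e_{i}}\,\sigma(x_{ij})$ for all $i,j\in I_r$, with $\sigma$ the Witt-vector Frobenius of $W_m(R)$, and $\mathbf{Aut}_m(\mathcal M_\pi)$ is the open subfunctor where $X$ is invertible. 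Since these relations only ever couple the entry indexed by $(i,j)$ to the one indexed by $(\pi(i),\pi(j))$, the scheme $\mathbf{End}_m(\mathcal M_\pi)$ is literally the product $\prod_{\mathcal O\in\mathcal B_{\pi\times\pi}}Z_{\mathcal O}$ of per-orbit schemes $Z_{\mathcal O}$, and — because $\mathbf{Aut}_m(\mathcal M_\pi)$ is smooth and contains the identity — $\gamma_{\mathcal M_\pi}(m)=\dim_k\operatorname{Lie}\mathbf{Aut}_m(\mathcal M_\pi)=\dim_k\operatorname{Lie}\mathbf{End}_m(\mathcal M_\pi)=\sum_{\mathcal O}\dim_k\operatorname{Lie}Z_{\mathcal O}$, while $\pi_0\bigl(\mathbf{End}_m(\mathcal M_\pi)\bigr)=\prod_{\mathcal O}\pi_0(Z_{\mathcal O})$. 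This structural reduction is routine; the content is in analyzing a single $Z_{\mathcal O}$.

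Fix an orbit $\mathcal O=\{(i_1,j_1),\dots,(i_s,j_s)\}$ with sign pattern $\epsilon_{\mathcal O}=(\epsilon_1,\dots,\epsilon_s)$, $\epsilon_t=e_{i_t}-e_{j_t}\in\{0,\pm1\}$. Writing $x_t$ for the entry $x_{i_t,j_t}$, the defining equations of $Z_{\mathcal O}$ become the cyclic chain
\[
p^{\max(-\epsilon_t,0)}\,x_{t+1}=p^{\max(\epsilon_t,0)}\,\sigma(x_t)\qquad(t\in\mathbb Z/s),
\]
a system of congruences in $W_m(R)$. Each step with $\epsilon_t=-1$ forces $x_t$ (hence $\sigma(x_t)$) into $pW_m$ and then determines $x_{t+1}$ uniquely; each step with $\epsilon_t\in\{0,1\}$ determines $x_{t+1}$ from $x_t$ outright. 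Tracking these forced divisibilities around the chain is exactly what the notions of \emph{free linear segment of level $\lambda$} (Definition~\ref{definition:freelinearsegment}) and \emph{circular sequence of level $\lambda$} (Definition~\ref{definition:circularsequencelevel}) are built to record. I would prove by induction on $m$ that $\dim_k\operatorname{Lie}Z_{\mathcal O}=\sum_{\lambda=1}^m a_\lambda(\epsilon_{\mathcal O})$: the additional Witt coordinate available at level $m$ (as opposed to level $m-1$) contributes one new free parameter for each free linear segment of $\epsilon_{\mathcal O}$ of level $\le m$, the segments that ``saturate'' by depth $m$ ceasing to contribute. Summing over $\mathcal O\in\mathcal B_{\pi\times\pi}$ then gives the dimension formula.

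For the component count, the cases split according to whether $\epsilon_{\mathcal O}$ is circular. If $\epsilon_{\mathcal O}$ is not circular of any level, then — by Definition~\ref{definition:circularsequencelevel} — the $p$-valuations around the chain cannot be balanced, so iterating the chain forces every $x_t$ to vanish; hence $Z_{\mathcal O}$ is a single reduced point and contributes the factor $1=p^0$. If $\epsilon_{\mathcal O}$ is circular of level $\lambda$, the net $p$-power around the chain vanishes, every $x_t$ is forced into $p^{\lambda}W_m$, and after cancelling the forced $p$-powers the chain closes up into a single $\sigma^{s}$-fixed-point equation on the surviving $W_{m-\lambda}(k)$-worth of coordinates, whose solution set $W_{m-\lambda}(\mathbb F_{p^s})$ is a reduced, $0$-dimensional scheme with $p^{(m-\lambda)s}$ points; as $s=|\mathcal O|$, this contributes $p^{(m-\lambda)|\mathcal O|}$. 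Multiplying over all orbits — only circular ones matter, and those of level $\ge m$ contribute trivially, matching the range $0\le\lambda\le m-1$ — yields $p^{b}$ with $b$ as stated.

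The main obstacle I anticipate is the inductive per-orbit dimension count: one must define free linear segments and their levels so that they are stable under truncation modulo $p^m$ and so that every increment of $\dim_k\operatorname{Lie}Z_{\mathcal O}$ from level $m-1$ to level $m$ is matched bijectively with a free linear segment of level exactly $m$. This is delicate precisely because runs of $\epsilon_t=0$ propagate the Frobenius and interact with the cyclicity, and because on a circular orbit the ``free'' coordinate must be reconciled with $\sigma^{s}$-semilinearity rather than genuine freeness. The component computation runs in parallel and is milder, once the structure of $Z_{\mathcal O}$ as a disjoint union of affine spaces has been made explicit. Finally, the general $F$-crystal version (arbitrary nonnegative Hodge slopes, so $\epsilon_t\in\mathbb Z$) is handled by the same argument, the only change being that the forced $p$-divisibilities may have arbitrary depth.
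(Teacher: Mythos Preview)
Your orbit decomposition of $\mathbf{End}_m(\mathcal M_\pi)$ into a product $\prod_{\mathcal O}Z_{\mathcal O}$ is correct and matches the paper. The gap is in your per-orbit analysis of $Z_{\mathcal O}$. You assert that for a non-circular orbit ``iterating the chain forces every $x_t$ to vanish,'' and that for a circular orbit of level $\lambda$ the scheme $Z_{\mathcal O}$ is the $0$-dimensional set $W_{m-\lambda}(\mathbb F_{p^s})$. Both are false. For $\epsilon_{\mathcal O}=(-1,1,1)$ (non-circular, $\sum\epsilon_t=1$) and $m=2$ one finds $Z_{\mathcal O}\cong\mathbb A^1_k$, not a point: the segment $(-1,1)$ is free linear of level $1$, so $a_1=1$. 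For $\epsilon_{\mathcal O}=(-1,1)$ (circular of level $1$) and $m=2$, $Z_{\mathcal O}$ is $p^2$ copies of $\mathbb A^1_k$, not a finite set. The issue is that you treat each $x_t\in W_m$ as an indivisible unknown, whereas the correct structure only emerges at the level of individual Witt components $x_{l,t}$: within a single orbit some components are forced to zero, some are genuinely free, and some (exactly $m-\lambda_\epsilon$ of them, when the orbit is circular of level $\lambda_\epsilon<m$) satisfy a $\sigma^s$-fixed-point equation. Your component counts happen to land on the right numbers, but the reasoning is inconsistent: you claim $\dim Z_{\mathcal O}=\sum_\lambda a_\lambda(\epsilon_{\mathcal O})$ in one paragraph and $\dim Z_{\mathcal O}=0$ in the next. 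Note also that your displayed chain equation has the wrong index: the paper's \eqref{equation:breakdown3} has $\mu_{t+1}=\max(0,-\epsilon_{t+1})$ on the $x_{t+1}$ side, not $\max(0,-\epsilon_t)$.

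The paper avoids this by working component-by-component from the outset: it encodes the system \eqref{equation:breakdown3} as a weighted digraph $\Gamma^m_{\epsilon_{\mathcal O}}$ on the $m|\mathcal O|$ Witt components, so that $k[Z_{\mathcal O}]^{\mathrm{perf}}$ factors over the connected components of $\Gamma^m_{\epsilon_{\mathcal O}}$ (Proposition~\ref{proposition:firstcalc}), each contributing an affine line, a point, or a finite $\sigma$-cycle. The combinatorics of Sections~\ref{tworeductions} and~\ref{section:dm} then translate counts of these pieces into the $a_\lambda$ and $\lambda_\epsilon$ language. Your induction-on-$m$ plan could in principle be executed, but the increment you describe (``one new free parameter for each free linear segment of level $\le m$'') is wrong: the increment $\dim Z_{\mathcal O}^{(m)}-\dim Z_{\mathcal O}^{(m-1)}$ is $a_m(\epsilon_{\mathcal O})$, the number of segments of level \emph{exactly} $m$. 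Proving that bijectively, without the digraph picture, amounts to reproving Lemmas~\ref{lemma:pdivbasic1}--\ref{lemma:pdivbasic3}.
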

See Theorem \ref{theorem:main} for the more general $F$-crystal case, which will imply the Dieudonn\'e module case stated above. We state one application of the main result.


\begin{corollary}[Theorem \ref{theorem:strictinequalitycircular}]
If $\mathcal{M}$ is a nonordinary $F$-circular Dieudonn\'e module over $k$ of rank $r \geq 2$ (and thus $n_{\mathcal{M}} \geq 1$), then the sequence $\mathcal{S}^*_{\mathcal{M}}(1)$ is strictly decreasing, i.e., we have
\[\gamma_{\mathcal{M}}(1) - \gamma_{\mathcal{M}}(0) > \gamma_{\mathcal{M}}(2) - \gamma_{\mathcal{M}}(1) > \cdots > \gamma_{\mathcal{M}}(n_{\mathcal{M}}) - \gamma_{\mathcal{M}}(n_{\mathcal{M}}-1)>0.\]
\end{corollary}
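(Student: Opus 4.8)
The plan is to translate the claim, via the combinatorial formula of Theorem \ref{theorem:mainintro}, into a purely combinatorial statement about the orbit-sequences $\epsilon_{\mathcal{O}}$, and then to settle that statement using the $r$-cycle structure. Write $\Delta(m) := \gamma_{\mathcal{M}}(m) - \gamma_{\mathcal{M}}(m-1)$ for $m \geq 1$, so that $\mathcal{S}^*_{\mathcal{M}}(1) = (\Delta(1), \dots, \Delta(n_{\mathcal{M}}))$ and the assertion is exactly $\Delta(1) > \Delta(2) > \cdots > \Delta(n_{\mathcal{M}}) > 0$. By Theorem \ref{theorem:vasiugabberbasic}, $(\Delta(m))_{m \ge 1}$ is nonincreasing, $\Delta(m) > 0$ for $1 \le m \le n_{\mathcal{M}}$ (for $m=1$ this is where nonordinariness is used, as $\Delta(1) = \gamma_{\mathcal{M}}(1)$), and $\Delta(m) = 0$ for $m > n_{\mathcal{M}}$; thus $\Delta(n_{\mathcal{M}}) > 0$ is already known, $n_{\mathcal{M}} = \max\{m : \Delta(m) > 0\}$, and only the strict inequalities $\Delta(m) > \Delta(m+1)$ for $1 \le m \le n_{\mathcal{M}} - 1$ remain.

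By Theorem \ref{theorem:mainintro}, $\Delta(m) = \sum_{\mathcal{O} \in \mathcal{B}_{\pi \times \pi}} a_m(\epsilon_{\mathcal{O}})$, and therefore
\[\Delta(m) - \Delta(m+1) = \sum_{\mathcal{O} \in \mathcal{B}_{\pi \times \pi}} \bigl( a_m(\epsilon_{\mathcal{O}}) - a_{m+1}(\epsilon_{\mathcal{O}}) \bigr).\]
From Definition \ref{definition:freelinearsegment} one checks that for each fixed sequence $\epsilon$ the count $a_\lambda(\epsilon)$ is nonincreasing in $\lambda$ (a free linear segment of level $\lambda+1$ is contained in a unique one of level $\lambda$), so each summand is a nonnegative integer; hence it suffices, for every $m$ in range, to produce one orbit $\mathcal{O}$ with $a_m(\epsilon_{\mathcal{O}}) > a_{m+1}(\epsilon_{\mathcal{O}})$. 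Moreover, writing $\mu(\mathcal{O})$ for the largest $\lambda$ with $a_\lambda(\epsilon_{\mathcal{O}}) > 0$, monotonicity gives $n_{\mathcal{M}} = \max\{m : \Delta(m) > 0\} = \max_{\mathcal{O}} \mu(\mathcal{O})$.

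Now use that $\mathcal{M}$ is $F$-circular, i.e. $\pi$ is an $r$-cycle. Then every orbit of $\pi \times \pi$ on $I_r^2$ has length exactly $r$, and since both coordinates of its points run over all of $I_r$ the entries of $\epsilon_{\mathcal{O}}$ sum to $0$ — i.e. its partial-sum path is closed. Identifying $I_r$ with $\mathbb{Z}/r$ through the cycle $\pi$, the orbits are indexed by an offset $\delta \in \mathbb{Z}/r$, and the path of $\epsilon_{\mathcal{O}_\delta}$ has the closed form $t \mapsto N(0) - N(t)$ where $N(t)$ counts the Hodge-slope-$1$ indices in the length-$\delta$ window $[t, t+\delta)$. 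The key combinatorial input I would establish is that for these circular orbit-sequences $\lambda \mapsto a_\lambda(\epsilon_{\mathcal{O}})$ is \emph{strictly} decreasing until it vanishes, i.e. $a_1(\epsilon_{\mathcal{O}}) > \cdots > a_{\mu(\mathcal{O})}(\epsilon_{\mathcal{O}}) > 0$; a weaker statement that would already suffice is that the set $\{\mu(\mathcal{O}) : \mathcal{O} \in \mathcal{B}_{\pi\times\pi}\}$ contains every integer in $\{1,\dots,n_{\mathcal{M}}\}$. Granting either: for $1 \le m \le n_{\mathcal{M}} - 1$, choose an orbit with $\mu(\mathcal{O}) \ge m$ — say one attaining $\max_{\mathcal{O}}\mu(\mathcal{O}) = n_{\mathcal{M}}$, or in the weaker form one with $\mu(\mathcal{O}) = m$; in either case $a_m(\epsilon_{\mathcal{O}}) > a_{m+1}(\epsilon_{\mathcal{O}})$, so $\Delta(m) > \Delta(m+1)$, and the corollary follows.

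The nonformal step — and the expected main obstacle — is this combinatorial lemma. Circularity is essential: for an arbitrary $\{0,\pm1\}$ sequence the analogue fails, since a free linear segment can plateau over several consecutive levels (for instance when the path carries two bumps of unequal height), so one must exploit that $\epsilon_{\mathcal{O}}$ is simultaneously closed and of the sliding-window shape $N(0) - N(t)$ forced by being a $\pi \times \pi$-orbit sequence of a single $r$-cycle. Intuitively, in a closed sequence a free linear segment of top level has no boundary to lean against and so must strictly contract when one passes up a level; to convert this into the ``no skipped levels'' statement, the natural tool is a discrete intermediate-value argument — deform the offset $\delta$ one cell at a time from $\delta = 0$ (the trivial orbit, $\mu = 0$) to an orbit with $\mu = n_{\mathcal{M}}$, and show $\mu(\mathcal{O}_\delta)$ moves by at most $1$ at each step. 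Making this precise means tracking carefully how free linear segments are created and destroyed in terms of the window counts $N(t)$ as $\delta$ varies; everything else is the bookkeeping above layered on Theorems \ref{theorem:vasiugabberbasic} and \ref{theorem:mainintro}.
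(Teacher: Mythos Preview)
Your reduction is correct and matches the paper: Theorem~\ref{theorem:mainintro} gives $\Delta(m) = \sum_{\mathcal{O}} a_m(\epsilon_{\mathcal{O}})$, the termwise monotonicity $a_{m+1}(\epsilon_{\mathcal{O}}) \le a_m(\epsilon_{\mathcal{O}})$ is Corollary~\ref{corollary:freelinearsegmentcontains}, and it remains to exhibit, for each $1 \le m \le n_{\mathcal{M}}-1$, one orbit where the inequality is strict. Your option (b) --- that $\{\mu(\mathcal{O}_\delta)\}$ hits every value in $\{1,\dots,n_{\mathcal{M}}\}$, proved by a one-step Lipschitz bound on $\delta \mapsto \mu(\mathcal{O}_\delta)$ --- is exactly the paper's route.

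Discard option (a): the claim that $\lambda \mapsto a_\lambda(\epsilon_{\mathcal{O}})$ is \emph{strictly} decreasing on each individual orbit is false even for $r$-cycle Dieudonn\'e modules. With $r=4$, Hodge slopes $(0,0,1,1)$ and offset $2$, one gets $\epsilon_{\mathcal{O}} = (-1,-1,1,1)$ and $a_1 = a_2 = 1$.

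The step you label ``nonformal'' --- the Lipschitz bound --- is in fact a two-line telescoping estimate, and this is the entire content of the paper's proof. In the paper's indexing $\epsilon_{\mathcal{O}_l} = (e_i - e_{i+l-1})_i$, one has
\[\sum_{t=0}^w (e_{u+t} - e_{u+l+t}) \;=\; \sum_{t=0}^w (e_{u+t} - e_{u+l-1+t}) \;+\; e_{u+l-1} - e_{u+l+w},\]
so if every partial sum for $\mathcal{O}_l$ exceeds $-(n+1)$, then every partial sum for $\mathcal{O}_{l+1}$ exceeds $-(n+1) + 0 - 1 = -(n+2)$; only $e_i \in \{0,1\}$ is used, which is precisely the Dieudonn\'e hypothesis. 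Thus $\mu(\mathcal{O}_{l+1}) \le \mu(\mathcal{O}_l) + 1$, and since $\mu(\mathcal{O}_1)=0$, the minimal $l_0$ with $\mu(\mathcal{O}_{l_0}) \ge n+1$ satisfies $\mu(\mathcal{O}_{l_0}) = n+1$, giving $a_{n+1}(\epsilon_{\mathcal{O}_{l_0}}) > 0 = a_{n+2}(\epsilon_{\mathcal{O}_{l_0}})$. So your outline is right, but the ``tracking carefully'' you anticipate is just this identity.
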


\begin{remark}
Although Theorem \ref{theorem:vasiugabberbasic} can be generalized to the $F$-crystal case (see \cite[Proposition 2.11, Theorem 3.15]{Xiao:vasiuconjecture}) and Theorem \ref{theorem:mainintro} applies to $F$-cyclic $F$-crystal as well, it is not true that the sequence $\mathcal{S}^*_{\mathcal{M}}(l)$ is always strictly decreasing if $\mathcal{M}$ is an $F$-circular $F$-crystal. We construct an $F$-circular $F$-crystal $\mathcal{M}$ of rank $2$ in Example \ref{example:counterexamplefcrystal} such that $\mathcal{S}^*_{\mathcal{M}}(1)$ is a constant sequence of arbitrary finite length.
\end{remark}

\begin{notation}
All $p$-divisible groups, Dieudonn\'e modules, and $F$-crystals in this paper are over $k$.
\end{notation}

\section{Endomorphism Group Schemes at Finite Level}

\subsection{Basic Setup} \label{subsection:basicsetup}
Let $\mathcal{M} = (M, \varphi)$ be an $F$-crystal of rank $r>0$. For each integer $m \geq 1$, there is a smooth affine group scheme $\mathbf{Aut}_m(\mathcal{M})$ over $k$ such that its group of $k$-valued points is the group of automorphisms of $\mathcal{M}$ modulo $p^m$. More precisely, we have
\begin{equation*}
\begin{gathered}
\mathbf{Aut}_m(\mathcal{M})(k) = \{\bar{f} \in \mathrm{GL}_{M/p^mM}(W_m(k)) \; | \; \exists \; f \in \mathrm{GL}_M(W(k)) \\ 
\textrm{ such that } P(f) = \bar{f}  \textrm{ and } \varphi f \varphi^{-1} \in  f +p^m \mathrm{End}_M(W(k)\},
\end{gathered}
\end{equation*}
where $P: \mathrm{GL}_M(W(k)) \to \mathrm{GL}_{M/p^mM}(W_m(k))$ is the restriction modulo $p^m$ epimorphism. Similarly, there exists a smooth affine group scheme $\mathbf{End}_m(\mathcal{M})$ over $k$ such that its group of $k$-valued points is the additive group of endomorphisms of $\mathcal{M}$ modulo $p^m$. More precisely, we have
\begin{equation*}
\begin{gathered}
\mathbf{End}_m(\mathcal{M})(k) = \{\bar{f} \in \mathrm{End}_{M/p^mM}(W_m(k)) \; | \; \exists \; f \in \mathrm{End}_M(W(k))\\
\textrm{ such that } P'(f) = \bar{f} \textrm{ and } \varphi f \varphi^{-1} \in f + p^m \mathrm{End}_M(W(k)) \},
\end{gathered}
\end{equation*}
where $P': \mathrm{End}_M(W(k)) \to \mathrm{End}_{M/p^mM}(W_m(k))$ is the restriction modulo $p^m$ epimorphism. We refer to \cite[Section 2.4]{Xiao:vasiuconjecture} for the detail construction of $\mathbf{Aut}_m(\mathcal{M})$ and $\mathbf{End}_m(\mathcal{M})$. Note that $\mathbf{Aut}_m(\mathcal{M})$ is an open subscheme of $\mathbf{End}_m(\mathcal{M})$, thus the dimension of $\mathbf{Aut}_m(\mathcal{M})$ and the dimension of $\mathbf{End}_m(\mathcal{M})$ are equal.

\begin{definition} \label{definition:fcyclicfcrystal}
Let $\mathcal{M}$ be an $F$-crystal, and $\mathcal{E} = (e_1, e_2, \cdots, e_r)$ be the sequence of Hodge slopes of $\mathcal{M}$. For every ordered $W(k)$-basis $B = (v_1, v_2, \dots, v_r)$ of $M$ and every permutation $\pi$ on $I_r = \{1, 2, \dots, r\}$, we define an $F$-crystal $\mathcal{M}_{\mathcal{E}, B, \pi} = (M, \varphi_{\mathcal{E}, B, \pi})$ by the formula $\varphi_{\mathcal{E}, B, \pi}(v_i) = p^{e_i}v_{\pi(i)}$ for all $i \in I_r$. We say that $\mathcal{M}$ with Hodge slopes $\mathcal{E}$ is \emph{$F$-cyclic} (resp. $F$-circular) if there exists an ordered $W(k)$-basis $B$ of $M$ and a permutation (resp. an $r$-cycle permutation) $\pi$ on $I_r$ such that $\mathcal{M}$ is isomorphic to $\mathcal{M}_{\mathcal{E}, B, \pi}$. When $\mathcal{E}$ and $B$ are understood, we let $\mathcal{M}_{\mathcal{E}, B, \pi} = \mathcal{M}_{\pi} = (M, \varphi_{\pi})$.
\end{definition}

Let $\mathcal{M}_{\pi} = (M, \varphi_{\pi})$ be an $F$-cyclic $F$-crystal of rank $r > 0$. For $(i,j) \in I_r^2$, let $v_{i,j}: M \to M$ be the $W(k)$-linear map such that for each $l \in I_r$, $v_{i,j}(v_l) = \delta_{j,l} v_i$. Hence $\{v_{i,j} | (i,j) \in I_r^2\}$ is a $W(k)$-basis of $\mathrm{End}_M(W(k))$, the $W(k)$-algebra of all the $W(k)$-linear endomorphisms of $M$. We denote also by $\varphi_{\pi}$ the $\sigma$-linear endomorphism of $\mathrm{End}_M(W(k))[1/p]$ given by the rule: for every $f\in \mathrm{End}_M(W(k))[1/p]$, we have $\varphi_{\pi}(f) = \varphi_{\pi} \circ f \circ \varphi^{-1}_{\pi}$. Therefore, for every $(i,j) \in I_r^2$, we have
\[\varphi_{\pi}(v_{i,j}) = p^{e_i-e_j}v_{\pi(i),\pi(j)}.\]
If we let $\epsilon_{i,j} := e_i-e_j$, then we have
\[\varphi_{\pi}(v_{i,j}) = p^{\epsilon_{i,j}}v_{\pi(i),\pi(j)}.\]
If we let $\mu_{i,j} := \mathrm{max}\{0,-\epsilon_{i,j}\}$, then for every 
\[f = \sum_{(i,j) \in I_r^2} \underline{y}_{i,j}v_{i,j} \in \mathrm{End}_M(W(k)) , \qquad \underline{y}_{i,j} \in W(k),\]  
we have 
\[\varphi_{\pi}(f)  = \sum_{(i,j) \in I_r^2} \sigma(\underline{y}_{i,j})p^{\epsilon_{i,j}}v_{\pi(i),\pi(j)} \in \mathrm{End}_M(W(k))\] 
if and only if for all $(i,j) \in I_r^2$, we have $\underline{y}_{i,j} = p^{\mu_{i,j}}\underline{x}_{i,j}$ for some $\underline{x}_{i,j} \in W(k)$.

If $\varphi_{\pi}(f) \in f + p^m\mathrm{End}_M(W(k))$, then
\begin{equation} \label{equation:breakdown1}
\sum_{(i,j) \in I_r^2} p^{\mu_{i,j}+\epsilon_{i,j}}\sigma(\underline{x}_{i,j})v_{\pi(i),\pi(j)} \equiv \sum_{(i,j) \in I_r^2}p^{\mu_{i,j}}\underline{x}_{i,j}v_{i,j} \quad \textrm{mod}\; \; p^m,
\end{equation}
whence
\begin{equation} \label{equation:breakdown2}
p^{\mu_{i,j}+\epsilon_{i,j}}\sigma(\underline{x}_{i,j})\equiv p^{\mu_{\pi(i),\pi(j)}}\underline{x}_{\pi(i),\pi(j)} \quad \textrm{mod} \; \; p^m,
\end{equation}
for all $(i,j) \in I_r^2$. Let $\mathcal{B}_{\pi \times \pi}$ be the set of orbits of $\pi \times \pi$ on $I_r^2$ and let $\mathcal{O} = \{(i_1,j_1),(i_2,j_2),\dots,(i_s,j_s)\}$ be an orbit of length $s$, i.e., $(\pi \times \pi)(i_t,j_t) = (i_{t+1},j_{t+1})$ for $t \in I_s$, where the subscripts are taken modulo $s$. For simplicity, set $x_t := x_{i_t,j_t}$, $\epsilon_t := \epsilon_{i_t,j_t} = e_{i_t}-e_{j_t}$, $\mu_t:=\mu_{i_t,j_t}$, and $\epsilon_{\mathcal{O}} := (\epsilon_1, \dots, \epsilon_s)$. Equation \eqref{equation:breakdown2} is equivalent to
\begin{equation} \label{equation:breakdown3}
p^{\mu_t+\epsilon_t}\sigma(\underline{x}_t)\equiv p^{\mu_{t+1}}\underline{x}_{t+1} \quad \textrm{mod} \; \; p^m,
\end{equation}
for all $t \in I_s$. Let $(x_{0,t}, x_{1,t}, \dots, x_{m-1,t}) \in W_m(k)$ be the reduction of $\underline{x}_t$ modulo $p^m$, where $x_{i,t} \in k$ for all $i \in \{0, 1, \dots, m-1\}$. 

\subsection{Digraphs} \label{subsection:weighteddigraphs}
Assume that $|\mathcal{O}| \geq 2$. We describe Equations \eqref{equation:breakdown3} using weighted directed graphs (or just digraphs for short in this paper), where each vertex represents a variable and the weight $w$ of each directed edge means that the equation ``$\textrm{source}^{p^{w+1}} = \textrm{target}$'' holds. We consider different possible values of $\epsilon_t$ and $ \epsilon_{t+1}$  in eleven mutually exclusive cases:

\begin{case}
\item Suppose $(\epsilon_t, \epsilon_{t+1}) \in S_{1,m}$, where 
\[S_{1,m} :=\{(x,y) \in \mathbb{Z}^2 \; | \; 0< x=-y<m\}.\]
Then \eqref{equation:breakdown3} is equivalent to
\begin{equation*} 
p^{\epsilon_t}(x_{0,t}^p,x_{1,t}^p,\dots,x_{m-1,t}^p) \equiv p^{\epsilon_t}(x_{0,t+1},x_{1,t+1},\dots,x_{m-1,t+1}) \textrm{ mod } p^m.
\end{equation*}
This is further equivalent to the system of equations\footnote{As we are only considering smooth affine group schemes over $k$ and their $k$-valued points, here and in what follows, each equation of the form $x_{i,t}^{p^a} = x_{j,t+1}^{p^b}$ is replaced by the equation $x_{i,t}^{p^{a-b}} = x_{j,t+1}$, in which $a-b$ could be negative.}:
\begin{equation} \label{equation:breakdown4}
x_{0,t}^p = x_{0,t+1},\; x_{1,t}^p = x_{1, t+1},\; \dots,\; x_{m-\epsilon_t-1, t}^p = x_{m-\epsilon_t-1,t+1}.
\end{equation} 	
We use Figure \ref{fig:case1} to represent \eqref{equation:breakdown4}.

\begin{figure}[ht]
\begin{minipage}[b]{.45\textwidth}
\centering
\begin{tikzpicture}
\tikzset{vertex/.style = {shape=circle,draw,minimum size=1.5em}}
\tikzset{edge/.style = {->,> = latex'}}
\node (a) at (0,0) {$x_{0,t}$};
\node (b) at (0,-0.5) {$x_{1,t}$};
\node (c) at (0,-1) {$\vdots$};
\node (d) at (0,-1.5) {$x_{m-\epsilon_t-1,t}$};
\node (e) at (0,-2) {$x_{m-\epsilon_t,t}$};
\node (f) at (0,-2.5) {$\vdots$};
\node (g) at (0,-3) {$x_{m-1,t}$};
\node (h) at (4,0) {$x_{0,t+1}$};
\node (i) at (4,-0.5) {$x_{1,t+1}$};
\node (j) at (4,-1) {$\vdots$};
\node (k) at (4,-1.5) {$x_{m-\epsilon_t-1,t+1}$};
\node (l) at (4,-2) {$x_{m-\epsilon_t,t+1}$};
\node (m) at (4,-2.5) {$\vdots$};
\node (n) at (4,-3) {$x_{m-1,t+1}$};
\draw[edge] (a) -- (h) node[midway, above] {\tiny $0$};
\draw[edge] (b) -- (i) node[midway, above] {\tiny $0$};
\draw[edge] (d) -- (k) node[midway, above] {\tiny $0$};
\end{tikzpicture}
\caption{Digraph for Case (1)} \label{fig:case1}
\end{minipage}
\hfill
\begin{minipage}[b]{.45\textwidth}
\centering
\begin{tikzpicture}
\tikzset{vertex/.style = {shape=circle,draw,minimum size=1.5em}}
\tikzset{edge/.style = {->,> = latex'}}
\node (a) at (0,0) {$x_{0,t}$};
\node (b) at (0,-0.5) {$x_{1,t}$};
\node (c) at (0,-1) {$\vdots$};
\node (d) at (0,-1.5) {$x_{m-\epsilon_t-1,t}$};
\node (e) at (0,-2) {$x_{m-\epsilon_t,t}$};
\node (f) at (0,-2.5) {$\vdots$};
\node (g) at (0,-3) {$x_{m-1,t}$};
\node (h) at (4,0) {$x_{0,t+1}$};
\node (i) at (4,-0.5) {$x_{1,t+1}$};
\node (j) at (4,-1) {$\vdots$};
\node (k) at (4,-1.5) {$x_{m-\epsilon_t-1,t+1}$};
\node (l) at (4,-2) {$x_{m-\epsilon_t,t+1}$};
\node (m) at (4,-2.5) {$\vdots$};
\node (n) at (4,-3) {$x_{m-1,t+1}$};
\draw[edge] (a) -- (h) node[midway, above] {\tiny $0$};
\draw[edge] (b) -- (i) node[midway, above] {\tiny $0$};
\draw[edge] (d) -- (k) node[midway, above] {\tiny $0$};
\draw[edge] (e) -- (l) node[midway, above] {\tiny $0$};
\draw[edge] (g) -- (n) node[midway, above] {\tiny $0$};
\end{tikzpicture}
\caption{Digraph for Case (2)} \label{fig:case2}
\end{minipage}
\end{figure}

\item Suppose $(\epsilon_t, \epsilon_{t+1}) \in S_{2}$, where 
\[S_{2} = S_{2,m} := \{(x,y) \in \mathbb{Z}^2 \; | \; x \leq 0 \leq y\}.\] 
Then \eqref{equation:breakdown3} is equivalent to
\begin{equation*} 
(x_{0,t}^p,x_{1,t}^p,\dots,x_{m-1,t}^p) \equiv (x_{0,t+1},x_{1,t+1},\dots,x_{m-1,t+1}) \textrm{ mod } p^m.
\end{equation*}
This is further equivalent to the system of equations:
\begin{equation} \label{equation:breakdown7}
x_{0,t}^p = x_{0,t+1},\; x_{1,t}^p = x_{1,t+1},\; \dots,\; x_{m-1,t}^p = x_{m-1,t+1}.
\end{equation}
We use Figure \ref{fig:case2} to represent \eqref{equation:breakdown7}.

\item Suppose $(\epsilon_t, \epsilon_{t+1}) \in S_{3,m}$, where 
\[S_{3,m} :=\{(x,y) \in \mathbb{Z}^2 \; | \; 0 \leq x<-y < m\}.\] 
Then \eqref{equation:breakdown3} is equivalent to
\begin{equation*} 
p^{\epsilon_s}(x_{0,t}^p,x_{1,t}^p,\dots,x_{m-1,t}^p) \equiv p^{-\epsilon_{t+1}}(x_{0,t+1},x_{1,t+1},\dots,x_{m-1,t+1}) \textrm{ mod } p^m.
\end{equation*}
This is further equivalent to the system of equations:
\begin{equation} \label{equation:breakdown5}
\begin{aligned}
x_{0,t} = x_{1,t} = & \cdots = x_{-\epsilon_{t+1}-\epsilon_t-1,t} = 0, \\
x_{-\epsilon_{t+1}-\epsilon_t,t}^{p^{\epsilon_{t+1}+\epsilon_t+1}} = x_{0,t+1},\; & \dots,\; x_{m-\epsilon_t-1,t}^{p^{\epsilon_{t+1}+\epsilon_t+1}} = x_{m+\epsilon_{t+1}-1,t+1}.
\end{aligned}
\end{equation}
We use Figure \ref{fig:case3} to represent \eqref{equation:breakdown5}.

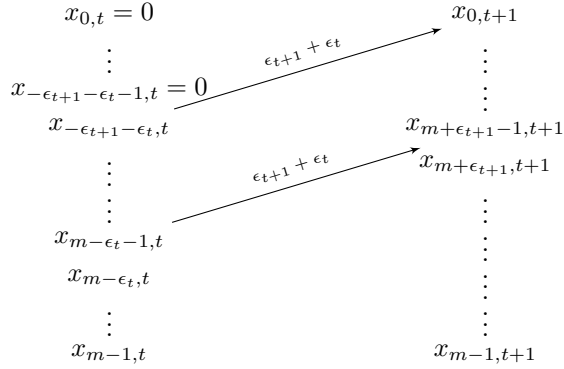
\begin{figure}
\centering
\begin{tikzpicture}
\tikzset{vertex/.style = {shape=circle,draw,minimum size=1.5em}}
\tikzset{edge/.style = {->,> = latex'}}
\node (a) at (0,0) {$x_{0,t}=0$};
\node (b) at (0,-0.5) {$\vdots$};
\node (c) at (0,-1) {$x_{-\epsilon_{t+1}-\epsilon_t-1,t}=0$};
\node (d) at (0,-1.5) {$x_{-\epsilon_{t+1}-\epsilon_t,t}$};
\node (e) at (0,-2) {$\vdots$};
\node (f) at (0,-2.5) {$\vdots$};
\node (g) at (0,-3) {$x_{m-\epsilon_t-1,t}$};
\node (h) at (0,-3.5) {$x_{m-\epsilon_t,t}$};
\node (i) at (0,-4) {$\vdots$};
\node (j) at (0,-4.5) {$x_{m-1,t}$};
\node (k) at (5,0) {$x_{0,t+1}$};
\node (l) at (5,-0.5) {$\vdots$};
\node (m) at (5,-1) {$\vdots$};
\node (n) at (5,-1.5) {$x_{m+\epsilon_{t+1}-1,t+1}$};
\node (o) at (5,-2) {$x_{m+\epsilon_{t+1},t+1}$};
\node (p) at (5,-2.5) {$\vdots$};
\node (q) at (5,-3) {$\vdots$};
\node (r) at (5,-3.5) {$\vdots$};
\node (s) at (5,-4) {$\vdots$};
\node (t) at (5,-4.5) {$x_{m-1,t+1}$};
\draw[edge] (d) -- (k) node[sloped,midway,anchor=center, above] {\tiny $\epsilon_{t+1}+\epsilon_t$};
\draw[edge] (g) -- (n) node[sloped,midway,anchor=center, above] {\tiny $\epsilon_{t+1}+\epsilon_t$};
\end{tikzpicture}
\caption{Digraph for Case (3) with negative weights} \label{fig:case3}
\end{figure}

\item Suppose $(\epsilon_t, \epsilon_{t+1}) \in S_{4,m}$, where 
\[S_{4,m} :=\{(x,y) \in \mathbb{Z}^2 \; | \; 0 \leq x < m \leq -y\}.\] 
Then \eqref{equation:breakdown3} is equivalent to the system of equations:
\begin{equation} \label{equation:breakdown12}
x_{0,t} = \cdots = x_{m-\epsilon_t-1,t} = 0.
\end{equation}
The digraph representing \eqref{equation:breakdown12} contains no edges and two columns of vertices, i.e., $x_{i,t}$ on the left and $x_{i,t+1}$ on the right for $i \in \{0,\dots,m-1\}$, with the first $m-\epsilon_t$ top left vertices marked to be $0$.

\item Suppose $(\epsilon_t, \epsilon_{t+1}) \in S_{5,m}$, where 
\[S_{5,m} :=\{(x,y) \in \mathbb{Z}^2 \; | \; 0 \leq -y < x < m\}.\] 
Then \eqref{equation:breakdown3} is, as in Case (3), equivalent to 
\begin{equation*} 
p^{\epsilon_s}(x_{0,t}^p,x_{1,t}^p,\dots,x_{m-1,t}^p) \equiv p^{-\epsilon_{t+1}}(x_{0,t+1},x_{1,t+1},\dots,x_{m-1,t+1}) \textrm{ mod } p^m.
\end{equation*}
This is further equivalent to the system of equations:
\begin{equation} \label{equation:breakdown6}
\begin{aligned}
x_{0,t+1} = x_{1,t+1} = & \cdots = x_{\epsilon_t+\epsilon_{t+1}-1,t+1} = 0,\\
x_{0,t}^{p^{\epsilon_{t+1}+\epsilon_t+1}} = x_{\epsilon_{t+1}+\epsilon_t,t+1},\; & \dots,\; x_{m-\epsilon_t-1,t}^{p^{\epsilon_{t+1}+\epsilon_t+1}} = x_{m+\epsilon_{t+1}-1,t+1}.
\end{aligned}
\end{equation}
We use Figure \ref{fig:case4} to represent \eqref{equation:breakdown6}.

\begin{figure}
\centering
\begin{tikzpicture}
\tikzset{vertex/.style = {shape=circle,draw,minimum size=1.5em}}
\tikzset{edge/.style = {->,> = latex'}}
\node (a) at (0,0) {$x_{0,t}$};
\node (b) at (0,-0.5) {$\vdots$};
\node (c) at (0,-1) {$\vdots$};
\node (d) at (0,-1.5) {$x_{m-\epsilon_t-1,t}$};
\node (e) at (0,-2) {$x_{m-\epsilon_t,t}$};
\node (f) at (0,-2.5) {$\vdots$};
\node (g) at (0,-3) {$\vdots$};
\node (h) at (0,-3.5) {$\vdots$};
\node (i) at (0,-4) {$\vdots$};
\node (j) at (0,-4.5) {$x_{m-1,t}$};
\node (k) at (5,0) {$x_{0,t+1}=0$};
\node (l) at (5,-0.5) {$\vdots$};
\node (m) at (5,-1) {$x_{\epsilon_t+\epsilon_{t+1}-1,t+1}=0$};
\node (n) at (5,-1.5) {$x_{\epsilon_t+\epsilon_{t+1},t+1}$};
\node (o) at (5,-2) {$\vdots$};
\node (p) at (5,-2.5) {$\vdots$};
\node (q) at (5,-3) {$x_{m+\epsilon_{t+1}-1,t+1}$};
\node (r) at (5,-3.5) {$x_{m+\epsilon_{t+1},t+1}$};
\node (s) at (5,-4) {$\vdots$};
\node (t) at (5,-4.5) {$x_{m-1,t+1}$};
\draw[edge] (a) -- (n) node[sloped,midway,anchor=center, above] {\tiny $\epsilon_{t+1}+\epsilon_t$};
\draw[edge] (d) -- (q) node[sloped,midway,anchor=center, above] {\tiny $\epsilon_{t+1}+\epsilon_t$};
\end{tikzpicture}
\caption{Digraph for Case (5) with positive weights} \label{fig:case4}
\end{figure}
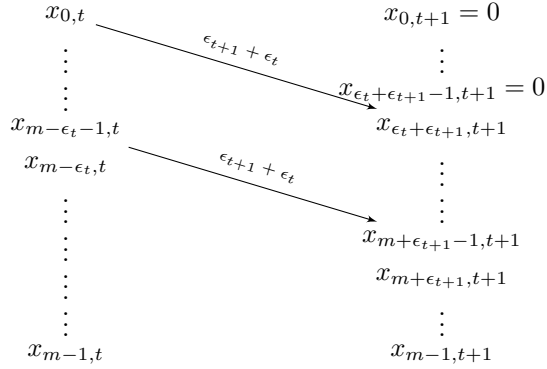

\item Suppose $(\epsilon_t, \epsilon_{t+1}) \in S_{6,m}$, where 
\[S_{6,m} :=\{(x,y) \in \mathbb{Z}^2 \; | \; 0 \leq -y < m \leq x \}.\] 
Then \eqref{equation:breakdown3} is equivalent to the system of equations:
\begin{equation} \label{equation:breakdown13}
x_{0,t+1} = \cdots = x_{m+\epsilon_{t+1}-1,t+1} = 0.
\end{equation}
The digraph representing \eqref{equation:breakdown13} contains no edges and two columns of vertices, i.e., $x_{i,t}$ on the left and $x_{i,t+1}$ on the right for $i \in \{0, \dots, m-1\}$, with the first $m+\epsilon_{t+1}$ top right vertices marked to be $0$.

\item Suppose $(\epsilon_t, \epsilon_{t+1}) \in S_{7,m}$, where \[S_{7,m} :=\{(x,y) \in \mathbb{Z}^2 \; | \; x \geq m, \; y \leq -m\}.\] 
In this case, \eqref{equation:breakdown3} always holds and hence the digraph contains no edges and just two columns of vertices, i.e., $x_{i,t}$ on the left and $x_{i,t+1}$ on the right for $i \in \{0, \dots, m-1\}$ (so no vertices are marked to be $0$).

\item Suppose $(\epsilon_t, \epsilon_{t+1}) \in S_{8,m}$, where \[S_{8,m} := \{(x,y) \in \mathbb{Z}^2 \; | \; x < 0 < -y < m\}.\]
Then \eqref{equation:breakdown3} is equivalent to
\begin{equation*} 
(x_{0,t}^p,x_{1,t}^p,\dots,x_{m-1,t}^p) \equiv p^{-\epsilon_{t+1}}(x_{0,t+1},x_{1,t+1},\dots,x_{m-1,t+1}) \textrm{ mod } p^m.
\end{equation*}
This is further equivalent to the system of equations:
\begin{equation} \label{equation:breakdown8}
\begin{aligned}
x_{0,t} = x_{1,t} = &\cdots = x_{-\epsilon_{t+1}-1,t} = 0, \\
x_{-\epsilon_{t+1},t}^{p^{\epsilon_{t+1}+1}} = x_{0,t+1},\; & \dots,\; x_{m-1,t}^{p^{\epsilon_{t+1}+1}} = x_{m+\epsilon_{t+1}-1,t+1}.
\end{aligned}
\end{equation}
We use Figure \ref{fig:case5} to represent \eqref{equation:breakdown8}.

\begin{figure}
\centering
\begin{tikzpicture}
\tikzset{vertex/.style = {shape=circle,draw,minimum size=1.5em}}
\tikzset{edge/.style = {->,> = latex'}}
\node (a) at (0,0) {$x_{0,t}=0$};
\node (b) at (0,-0.5) {$\vdots$};
\node at (0,-1) {$x_{-\epsilon_{t+1}-1,t}=0$};
\node (c) at (0,-1.5) {$x_{-\epsilon_{t+1},t}$};
\node (d) at (0,-2) {$\vdots$};
\node (e) at (0,-2.5) {$x_{m-1,t}$};
\node (f) at (5,0) {$x_{0,t+1}$};
\node (g) at (5,-0.5) {$\vdots$};
\node (h) at (5,-1) {$x_{m+\epsilon_{t+1}-1,t+1}$};
\node at (5,-1.5) {$x_{m+\epsilon_{t+1},t+1}$};
\node (i) at (5,-2) {$\vdots$};
\node (j) at (5,-2.5) {$x_{m-1,t+1}$};
\draw[edge] (c) -- (f) node[sloped,midway,anchor=center, above] {\tiny $\epsilon_{t+1}$};
\draw[edge] (e) -- (h) node[sloped,midway,anchor=center, above] {\tiny $\epsilon_{t+1}$};
\end{tikzpicture}
\caption{Digraph for Case (8) with negative weights} \label{fig:case5}
\end{figure}
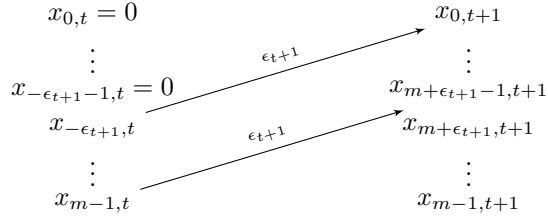

\item Suppose $(\epsilon_t, \epsilon_{t+1}) \in S_{9,m}$, where \[S_{9,m} := \{(x,y) \in \mathbb{Z}^2 \; | \; x < 0 < m \leq -y\}.\] 
Then \eqref{equation:breakdown3} is equivalent to the system of equations:
\begin{equation} \label{equation:breakdown10}
x_{0	,t} = x_{1,t} = \cdots  = x_{m-1,t} = 0.
\end{equation}
The digraph representing \eqref{equation:breakdown10} contains no edges, $m$ zero vertices on the left, and $x_{i,t+1}$ for $i \in \{0, \dots, m-1\}$ as vertices on the right. 

\item Suppose $(\epsilon_t, \epsilon_{t+1}) \in S_{10,m}$, where \[S_{10,m} := \{(x,y) \in \mathbb{Z}^2 \; | \; 0 < x < m, \; y > 0 \}.\]
Then \eqref{equation:breakdown3} is equivalent to
\begin{equation*} 
p^{\epsilon_t}(x_{1,t}^p,x_{2,t}^p,\dots,x_{m,t}^p) \equiv (x_{1,t+1},x_{2,t+1},\dots,x_{m,t+1}) \quad \textrm{mod} \; \; p^m.
\end{equation*}
This is further equivalent to the system of equations:
\begin{equation} \label{equation:breakdown9}
\begin{aligned}
x_{0,t+1} = x_{1,t+1} = & \cdots = x_{\epsilon_t-1,t+1} = 0, \\
x_{0,t}^{p^{\epsilon_t+1}} = x_{\epsilon_t,t+1},\; & \dots,\; x_{m-\epsilon_t-1,t}^{p^{\epsilon_t+1}} = x_{m-1,t+1}.
\end{aligned}
\end{equation}
We use Figure \ref{fig:case6} to represent \eqref{equation:breakdown9}.

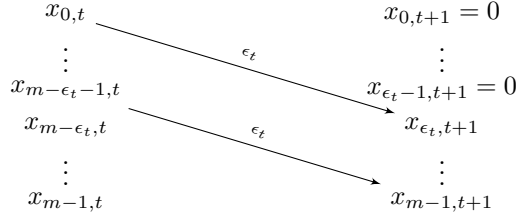
\begin{figure}
\centering
\begin{tikzpicture}
\tikzset{vertex/.style = {shape=circle,draw,minimum size=1.5em}}
\tikzset{edge/.style = {->,> = latex'}}
\node (a) at (0,0) {$x_{0,t}$};
\node (b) at (0,-0.5) {$\vdots$};
\node (c) at (0,-1) {$x_{m-\epsilon_t-1,t}$};
\node at (0,-1.5) {$x_{m-\epsilon_t,t}$};
\node (d) at (0,-2) {$\vdots$};
\node (e) at (0,-2.5) {$x_{m-1,t}$};
\node (f) at (5,0) {$x_{0,t+1}=0$};
\node (g) at (5,-0.5) {$\vdots$};
\node at (5,-1) {$x_{\epsilon_t-1,t+1}=0$};
\node (h) at (5,-1.5) {$x_{\epsilon_t,t+1}$};
\node (i) at (5,-2) {$\vdots$};
\node (j) at (5,-2.5) {$x_{m-1,t+1}$};
\draw[edge] (a) -- (h) node[sloped,midway,anchor=center, above] {\tiny $\epsilon_t$};
\draw[edge] (c) -- (j) node[sloped,midway,anchor=center, above] {\tiny $\epsilon_t$};
\end{tikzpicture}
\caption{Digraph for Case (10) with positive weights} \label{fig:case6}
\end{figure}

\item Suppose $(\epsilon_t, \epsilon_{t+1}) \in S_{11,m}$, where \[S_{11,m} := \{(x,y) \in \mathbb{Z}^2 \; | \; x \geq m, \; y > 0 \}.\]
Then \eqref{equation:breakdown3} is equivalent to the system of equations:
\begin{equation} \label{equation:breakdown11}
x_{0,t+1} = x_{1,t+1} = \cdots  = x_{m-1,t+1} = 0.
\end{equation}
The digraph representing \eqref{equation:breakdown11} contains no edges, $m$ zero vertices on the right, and $x_{i,t}$ for $i \in \{0,\dots,m-1\}$ as vertices on the left.
\end{case}
The above eleven cases cover all possible values of $(\epsilon_t, \epsilon_{t+1})$ as 
\[\mathbb{Z}^2 = \bigsqcup_{i=1}^{11} S_{i,m}.\] 
If $x_{i,t}x_{j,t+1}$ is an edge in any one of the eleven digraphs above, then its weight only depends on the values of $\epsilon_t$ and $\epsilon_{t+1}$ as it is equal to $\textrm{max}(\epsilon_t, 0) + \textrm{min}(\epsilon_{t+1},0)$. Furthermore, we note that the weight of $x_{i,t}x_{j,t+1}$ is equal to $j-i$. Hence we have
\begin{equation} \label{equation:changeofweight}
\mathrm{lv}(\epsilon_t, \epsilon_{t+1}) := \textrm{max}(\epsilon_t, 0) + \textrm{min}(\epsilon_{t+1},0) = j-i.
\end{equation}

\begin{remark}
If $(\epsilon_t, \epsilon_{t+1}) \in \mathbb{Z}^2$ with $|\epsilon_t| > m$ (resp. $|\epsilon_{t+1}| > m$), then the constructed digraph does not change if we replace $\epsilon_t$ (resp. $\epsilon_{t+1}$) by $\text{sgn}(\epsilon_t) \cdot m$ (resp. $\text{sgn}(\epsilon_{t+1}) \cdot m$).
\end{remark}

\begin{remark} \label{remark:digraph1}
If $|\mathcal{O}| = 1$, then \eqref{equation:breakdown3} is equivalent to
\begin{equation} \label{equation:breakdown14}
p^{\mu_1+\epsilon_1}\sigma(\underline{x}_1) = p^{\mu_1}\underline{x}_1 \textrm{ mod } p^m.
\end{equation}
If $\epsilon_1 > 0$ (resp. $\epsilon_1<0$), then \eqref{equation:breakdown14} is equivalent to $p^{\epsilon_1}\sigma(\underline{x}_1) = \underline{x}_1$ mod $p^m$ (resp. $\sigma(\underline{x}_1) = p^{\mu_1}\underline{x}_1$ mod $p^m$). In either case, we know that
\begin{equation} \label{equation:breakdown16}
x_{0,1} = x_{1,1} = \cdots = x_{m-1,t} = 0.
\end{equation}
The digraph representing \eqref{equation:breakdown16} contains $m$ zero vertices and no edges.

If $\epsilon_1 = 0$, then \eqref{equation:breakdown14} is equivalent to
\[\sigma(\underline{x}_1) = \underline{x}_1 \textrm{ mod } p^m.\]
This is equivalent to
\begin{equation} \label{equation:breakdown15}
x_{0,1}^p = x_{0,1}, \dots, x_{m-1,t}^p = x_{m-1,t}.
\end{equation}
The digraph representing \eqref{equation:breakdown15} contains $m$ vertices, i.e. $x_{0,1}, \dots, x_{m-1,1}$. For each $i \in \{0, \dots, m-1\}$, there is an edge whose source and target are both $x_{i,1}$ and whose weight is $0$.
\end{remark}

To end this subsection, we construct the digraph $\Gamma_{\mathbf{End}_m(\mathcal{M}_{\pi})}$ as follows. For each $\mathcal{O} \in \mathcal{B}_{\pi \times \pi}$, we have a circular sequence of integer $\epsilon_{\mathcal{O}} = (\epsilon_1, \dots, \epsilon_{|\mathcal{O}|})$ such that for each $t \in I_{|\mathcal{O}|}$, $|\epsilon_t|$ is less than or equal to the difference of the largest and the smallest Hodge slope of $\mathcal{M}_{\pi}$. If $|\mathcal{O}| = 1$, then $\Gamma^m_{\epsilon_{\mathcal{O}}}$ is given by Remark \ref{remark:digraph1}. Suppose $|\mathcal{O}| \geq 2$. For each pair $(\epsilon_t, \epsilon_{t+1})$, we have constructed a digraph $\Gamma^m_{\epsilon_t,\epsilon_{t+1}} = (V^m_{\epsilon_t, \epsilon_{t+1}}, E^m_{\epsilon_t, \epsilon_{t+1}}, w^m_{\epsilon_t, \epsilon_{t+1}})$, where $V^m_{\epsilon_t, \epsilon_{t+1}}$ is the vertex set, $E^m_{\epsilon_t, \epsilon_{t+1}}$ is the edge set, and $w^m_{\epsilon_t, \epsilon_{t+1}} : E^m_{\epsilon_t, \epsilon_{t+1}} \to \mathbb{Z}$ is the weight function. The vertex set $V^m_{\epsilon_t, \epsilon_{t+1}}$ has $2m$ vertices. We define $\Gamma^m_{\epsilon_{\mathcal{O}}}$ to be the union of all $\Gamma^m_{\epsilon_t,\epsilon_{t+1}}$ as $t \in I_{|\mathcal{O}|}$ with the understanding that $\epsilon_{|\mathcal{O}|+1} := \epsilon_1$. If $|\mathcal{O}| = 2$, then $V^m_{\epsilon_1, \epsilon_2} = V^m_{\epsilon_2,\epsilon_1}$ and thus the union of the vertex sets is a nondisjoint union. If $|\mathcal{O}| \geq 3$, then the union of the vertex sets is also a nondisjoint union as $V^m_{\epsilon_{t-1},\epsilon_t} \cap V^m_{\epsilon_t, \epsilon_{t+1}} = \{x_{0,t}, \dots, x_{m-1,t}\}$. The edge set of $\Gamma^m_{\epsilon_{\mathcal{O}}}$ is a disjoint union as $E^m_{\epsilon_{t-1},\epsilon_t} \cap E^m_{\epsilon_t, \epsilon_{t+1}} = \emptyset$. The weight function of $\Gamma^m_{\epsilon_{\mathcal{O}}}$ is well-defined as the edge set  of $\Gamma^m_{\epsilon_{\mathcal{O}}}$ is a disjoint union. We define 
\[\Gamma_{\mathbf{End}_m(\mathcal{M})} = \bigsqcup_{\mathcal{O} \in \mathcal{B}_{\pi \times \pi}} \Gamma^m_{\epsilon_{\mathcal{O}}}.\]

\subsection{Connected Components and Dimension of $\mathbf{End}_m(\mathcal{M})$}
We recall two basic definitions from graph theory. By a circular graph, we mean a connected graph that has exactly one cycle and that every vertex has degree $2$ \footnote{In a circular graph with one vertex $v$ and one edge, the degree of $v$ is still $2$.}. A (weighted) circular digraph is a weighted circular directed graph. By a linear graph, we mean a connected graph that either has one vertex with no edges or has at least two vertices of degree $1$ while other vertices (if any) have degree $2$. A (weighted) linear digraph is a weighted linear directed graph. 

We want to understand how zero vertices affect other vertices in $\Gamma_{\mathbf{End}_m(\mathcal{M})}$. During the construction of $\Gamma_{\mathbf{End}_m(\mathcal{M})}$, each vertex that has already been regarded as zero has degree either $0$ or $1$, and thus cannot be in a circular digraph. If a vertex of degree $1$ has been regarded as zero, then it must be either the origin or the terminal of a linear digraph. Due to the relation $\textrm{source}^{p^{w+1}} = \textrm{target}$, all other vertices in that linear digraph will be regarded as zero vertices as well. In other words, if there is one vertex that has been regarded as zero in a linear digraph, then all vertices in that linear digraph will be regarded as zero.

\begin{definition}
We call a (weighted) linear digraph that does not contain any zero vertex a (weighted) \emph{free linear digraph}.
\end{definition}

There are two types of connected components of $\Gamma_{\mathbf{End}_m(\mathcal{M})}$ which have no vertex regarded as zero: free linear digraphs and circular digraphs. We denote by $\ell(\Gamma_{\mathbf{End}_m(\mathcal{M})})$ the number of free linear digraphs in $\Gamma_{\mathbf{End}_m(\mathcal{M})}$. For each edge $x_{i,t}x_{j,t+1}$ of a circular digraph of weight $w$, we recall that $w = j-i$. As the origin and the terminal of a circular digraph are the same, we know that the sum of weights of all edges of every circular digraph is equal to $0$. We denote by $w(\Gamma_{\mathbf{End}_m(\mathcal{M})})$ the total number of edges in all circular digraphs in $\Gamma_{\mathbf{End}_m(\mathcal{M})}$.

\begin{proposition} \label{proposition:firstcalc}
Let $\mathcal{M} \cong \mathcal{M}_{\pi}$ be an $F$-cyclic $F$-crystal over $k$.
\begin{enumerate}
\item The number of connected components of $\mathbf{End}_m(\mathcal{M})$ is equal to $p^{w(\Gamma_{\mathbf{End}_m(\mathcal{M})})}$.
\item  The dimension of $\mathbf{End}_m(\mathcal{M})$ is equal to $\ell(\Gamma_{\mathbf{End}_m(\mathcal{M})})$.
\end{enumerate} 
\end{proposition}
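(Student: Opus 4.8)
The plan is to analyze the variety of $k$-points of $\mathbf{End}_m(\mathcal{M})$ cut out by the equations \eqref{equation:breakdown3} as reorganized via the digraph $\Gamma_{\mathbf{End}_m(\mathcal{M})}$, and to compute its dimension and connected component count by decomposing into connected components of the digraph. First I would observe that since the orbit decomposition of $\pi \times \pi$ on $I_r^2$ partitions the variables $\{x_{i,t}\}$ into independent blocks, and since $\Gamma_{\mathbf{End}_m(\mathcal{M})} = \bigsqcup_{\mathcal{O}} \Gamma^m_{\epsilon_{\mathcal{O}}}$, the scheme $\mathbf{End}_m(\mathcal{M})$ is a product over $\mathcal{O} \in \mathcal{B}_{\pi\times\pi}$ of the affine schemes defined by the equations attached to each $\Gamma^m_{\epsilon_{\mathcal{O}}}$. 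Dimension is additive and the number of connected components is multiplicative over such a product, so it suffices to treat a single digraph $\Gamma^m_{\epsilon_{\mathcal{O}}}$, and then (by the structure of the eleven cases, plus Remark \ref{remark:digraph1}) a single connected component of it — which the preceding discussion shows is either a set of isolated zero vertices, a free linear digraph, or a circular digraph.

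Next I would dispatch the zero components and the free linear components. A connected component containing a vertex forced to be zero contributes a single point in that block of coordinates (the discussion before the Definition of free linear digraph shows every vertex in it is then zero), hence contributes $0$ to the dimension and a factor of $1$ to the component count. For a free linear digraph with vertex set $\{z_0, z_1, \dots, z_N\}$ and edges encoding relations $\text{source}^{p^{w+1}} = \text{target}$ (with $w$ possibly negative, interpreted as in the footnote to \eqref{equation:breakdown4}), one traverses the unique path: choosing a value for one endpoint determines, via the relations $z^{p^{e}} = z'$ or equivalently $z = (z')^{p^{-e}}$ over the perfect field $k$, all the other coordinates as actual functions (since $k$ is perfect, $p$-th power and $p$-th root are both bijective endomorphisms of $k$ as a set of points, so each such relation is an isomorphism of affine lines on $k$-points). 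Thus each free linear digraph contributes exactly one free parameter, i.e., dimension $1$ and a connected (indeed irreducible) factor $\mathbb{A}^1$; summing gives $\dim = \ell(\Gamma_{\mathbf{End}_m(\mathcal{M})})$ and no contribution to the component count. Here I would be careful to phrase everything at the level of $k$-points of smooth group schemes as the paper does, so that raising to $p$-th powers is harmless.

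The main obstacle is the circular digraphs, and this is where I would spend the bulk of the argument. A circular digraph on vertices $z_0, \dots, z_{N-1}$ arranged in a cycle carries relations that, composing all the way around, force $z_0^{p^{E}} = z_0$ where $E = \sum w_i + N = N$ is the total number of edges — using that the sum of the weights around the cycle is $0$ (the origin and terminal coincide, and $w = j - i$ telescopes), as noted just before the proposition. The equation $z_0^{p^{N}} = z_0$ over $k$ has exactly $p^{N}$ solutions, namely the elements of $\mathbb{F}_{p^{N}}$, and each choice of $z_0$ propagates uniquely (again using perfectness of $k$ to invert the intermediate $p$-power maps) to a unique assignment of all remaining vertices; so this block is a finite étale $k$-scheme of $p^{N}$ points, i.e., $p^{N}$ connected components and dimension $0$. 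Taking the product over all circular digraphs multiplies the component counts, giving $p^{w(\Gamma_{\mathbf{End}_m(\mathcal{M})})}$ where $w(\Gamma_{\mathbf{End}_m(\mathcal{M})})$ is the total edge count of all circular digraphs, while contributing nothing to the dimension. The delicate points to get right are: (i) verifying that the composite relation around a cycle really is $z_0^{p^N} = z_0$ and not some twist, which hinges on correctly tracking the negative-weight conventions and the fact that in our digraphs every edge goes from column $t$ to column $t+1$ so the cyclic structure matches the orbit structure; (ii) confirming that an edge with negative weight still yields an isomorphism on $k$-points (true since $k$ is perfect, but worth stating); and (iii) checking that the scheme structure — not just the set of $k$-points — is reduced/étale in the circular case and smooth of the claimed dimension in the linear case, which follows from the explicit description of $\mathbf{End}_m(\mathcal{M})$ as a smooth affine group scheme in \cite[Section 2.4]{Xiao:vasiuconjecture} together with the fact that a smooth scheme whose $k$-points form a finite set is étale. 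Assembling these per-component computations and multiplying/adding over the product decomposition yields both assertions.
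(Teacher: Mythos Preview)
Your proposal is correct and follows essentially the same approach as the paper: decompose along orbits and then along connected components of the digraph, handle zero/free-linear/circular components separately (contributing $k$, $k[x]^{\mathrm{perf}}$, and $k^{p^N}$ respectively), and assemble by additivity of dimension and multiplicativity of component counts. The only cosmetic difference is that the paper carries this out explicitly at the level of the perfection $k[\mathbf{E}]^{\mathrm{perf}}$ of the coordinate ring rather than arguing via $k$-points plus smoothness, but the substance is identical.
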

\begin{proof}
For any $k$-algebra $R$, let $R^{\textrm{perf}}$ be the perfection of $R$. If $R$ is smooth, let $c(R) = c(R^\textrm{perf})$ be the number of connected components of $\mathrm{Spec}\,R$ or $\mathrm{Spec}\,R^{\textrm{perf}}$. Let $r$ be the rank of $\mathcal{M}$ and let $\mathbf{E} := \mathbf{End}_m(\mathcal{M})$. Let
\[X := \{x_{t,i,j} \mid t = 0, 1, \dots, m-1, (i,j) \in I_r^2\},\]
and $\mathfrak{I}$ be the ideal of $k[X]$ generated by the relations defined by \eqref{equation:breakdown2} for all $(i,j) \in I_r^2$, where $\underline{x}_{i,j} \equiv (x_{0,i,j}, \dots, x_{m-1,i,j}) \in W_m(k)$ modulo $p^m$. For every $l \in I_m$ and $(i,j) \in I_r^2$, if $x_{l,i,j}$ is regarded as a zero vertex in $\Gamma_{\mathbf{E}}$, then $x_{l,i,j} \in \mathfrak{I}$. The perfection of the representing $k$-algebra of $\mathbf{E}$ is 
\[k[\mathbf{E}]^{\textrm{perf}} \cong k[X]^{\textrm{perf}}/\sqrt{\mathfrak{I}}.\]
For every orbit $\mathcal{O} \in B_{\pi \times \pi}$, let
\[X_{\mathcal{O}} := \{x_{t,i,j} \mid t = 0, 1, \dots, m-1, (i,j) \in \mathcal{O}\},\]
and $\mathfrak{I}_{\mathcal{O}}$ be the ideal of $k[X_{\mathcal{O}}]$ generated by the relations defined by \eqref{equation:breakdown2} for all $(i,j) \in \mathcal{O}$. Let
\[k[\mathcal{O}] := k[X_{\mathcal{O}}]^{\textrm{perf}}/\sqrt{\mathfrak{I}_{\mathcal{O}}},\]
and thus we have
\[k[\mathbf{E}]^{\textrm{perf}} \cong \prod_{\mathcal{O} \in B_{\pi \times \pi}} k[\mathcal{O}].\]
For each $\mathcal{O} \in B_{\pi \times \pi}$, let $C_{\mathcal{O}}$ be the set of connected components of $\Gamma^m_{\epsilon_{\mathcal{O}}}$. For each $\mathcal{C} \in C_{\mathcal{O}}$, let $V_{\mathcal{C}}$ be the set of vertices of $\mathcal{C}$ and $E_{\mathcal{C}}$ be the set of edges of $\mathcal{C}$. Let $\mathfrak{I}_{\mathcal{C}}$ be the ideal of $k[V_{\mathcal{C}}]$ generated by the relations defined by the edge set $E_{\mathcal{C}}$. Let
\[k[\mathcal{C}] := k[V_{\mathcal{C}}]^{\text{perf}}/\sqrt{\mathfrak{I}_{\mathcal{C}}},\]
and thus we have
\begin{equation} \label{equation:decompositionrepresentingalgebra}
k[\mathbf{E}]^{\textrm{perf}} \cong \prod_{\mathcal{O} \in B_{\pi \times \pi}} k[\mathcal{O}] \cong \prod_{\mathcal{O} \in B_{\pi \times \pi}} \prod_{\mathcal{C} \in C_{\mathcal{O}}} k[\mathcal{C}].
\end{equation}

The first type of connected components $\mathcal{C}_1 \in C_{\mathcal{O}}$ is a linear digraph. If $\mathcal{C}_1$ contains a zero vertex, then the entire linear digraph contains just zero vertices. Hence $k[\mathcal{C}_1] \cong k$ has dimension $0$ and has exactly one connected component. Suppose $\mathcal{C}_1$ does not contain a zero vertex (and thus it is a free linear digraph) and the vertices (in order) are $x_{l_1,1}, x_{l_2,2}, \dots, x_{l_t,t}$, where $x_{l_1,1}$ is the origin and $x_{l_t,t}$ is the terminal. We have $x_{l_{j+1},j+1} = x_{l_j,j}^{p^{w_j}}$, where $w_j$ is the weight of the edge $x_{l_j,j}x_{l_{j+1},j+1}$ for all $j \in I_{t-1}$. In this case, $x_{l_1,1}$ can be considered as a free variable in $k[\mathcal{C}_1]$ and $x_{l_2,2}, \dots, x_{l_t,t}$ depend on the choice of $x_{l_1,1}$. We easily get that  $k[\mathcal{C}_1] \cong k[x_{l_1,1}]^{\text{perf}}$ has dimension $1$ and exactly one connected component.

The second type of connected components $\mathcal{C}_2 \in C_{\mathcal{O}}$ is a circular digraph. Recall that no vertices in a circular digraph can be zero because zero vertices have degree either $0$ or $1$. Let $w = w(\mathcal{C}_2)$ be the number of edges in $\mathcal{C}_2$. Let $x_{l,1}$ be a vertex of $\mathcal{C}_2$. Because the sum of all weights of edges in $\mathcal{C}_2$ is equal to $0$, we have $x_{l,1} = x_{l,1}^{p^w}$. There are $p^w$ solutions to this equation in $k$ and once the value of $x_{l,1}$ is fixed (among these $p^w$ solutions), the values of each vertex of $\mathcal{C}_2$ is uniquely determined. Hence $k[\mathcal{C}_2] \cong k^{p^w}$ has dimension $0$ and exactly $p^w$ connected components.

Based on \eqref{equation:decompositionrepresentingalgebra}, we conclude that the dimension of $\mathbf{E}$ is equal to
\[\textrm{dim}(k[\mathbf{E}]^{\textrm{perf}}) = \sum_{\mathcal{O} \in B_{\pi \times \pi}} \sum_{\mathcal{C} \in C_{\mathcal{O}}} \textrm{dim}(k[\mathcal{C}]) = \sum_{\substack{\textrm{free linear digraph} \\ \mathcal{C}_1 \textrm{ in } \Gamma_{\mathbf{E}}}} 1 = \ell(\Gamma_{\mathbf{E}}).\]
Similarly, the number of connected components of $\mathbf{E}$ is equal to 
\begin{align*}
c(k[\mathbf{E}]^{\textrm{perf}}) &= \prod_{\mathcal{O} \in B_{\pi \times \pi}} \prod_{\mathcal{C} \in C_{\mathcal{O}}} c(k[\mathcal{C}]) = \prod_{\substack{\textrm{circular digraph} \\ \mathcal{C}_2 \textrm{ in } \Gamma_{\mathbf{E}}}} c(k[\mathcal{C}_2]) \\
&= \prod_{\substack{\textrm{circular digraph} \\ \mathcal{C}_2 \textrm{ in } \Gamma_{\mathbf{E}}}} p^{w(\mathcal{C}_2)} = p^{w(\Gamma_{\mathbf{E}})} \qedhere.
\end{align*}
\end{proof}

\section{Two Reductions} \label{tworeductions}

In this section, we first generalize the construction of weighted digraphs in Subsection \ref{subsection:weighteddigraphs} to any circular sequence of integers $\epsilon$. Then we develop two reductions that allow us to modify the sequence $\epsilon$ without changing the number of free linear digraphs and the number of circular digraphs in the associated weighted digraph. Using these two reductions, we will able to get combinatorial formulas for the dimension and the number of connected components of $\mathbf{End}_m(\mathcal{M})$ in the next section.

\subsection{Generalization}
\begin{definition}
Let $\epsilon = (\epsilon_1, \epsilon_2, \dots, \epsilon_s)$ be a circular sequence of integers. If $s \geq 2$, for every $t \in I_s$, there is a digraph $\Gamma^m_{\epsilon_t, \epsilon_{t+1}}$ (as constructed in Subsection \ref{subsection:weighteddigraphs}) associated to the pair of integers $(\epsilon_t, \epsilon_{t+1})$. Here $\epsilon_{s+1} := \epsilon_1$. The vertex set of $\Gamma^m_{\epsilon_t, \epsilon_{t+1}}$ is 
\[\{x_{0,t}, \dots, x_{m-1,t}, x_{0,t+1}, \dots, x_{m-1,t+1}\},\]
and each edge (if any) of $\Gamma^m_{\epsilon_t, \epsilon_{t+1}}$ has source $x_{i,t}$ and target $x_{j,t+1}$ for some $i, j \in \{0, \dots, m-1\}$. Let $\Gamma^m_{\epsilon}$ be the nondisjoint union of $\Gamma^m_{\epsilon_t, \epsilon_{t+1}}$ for all $i \in I_s$. If $s = 1$, then let $\Gamma^m_{\epsilon}$ be the digraph constructed in the same way as in Remark \ref{remark:digraph1}. Let $\ell(\Gamma^m_{\epsilon})$ be the number of free linear digraphs in $\Gamma^m_{\epsilon}$, $c(\Gamma^m_{\epsilon})$ be the number of circular digraphs in $\Gamma^m_{\epsilon}$, and $w(\Gamma^m_{\epsilon})$ be the total number of edges in all circular digraphs in $\Gamma^m_{\epsilon}$.
\end{definition}

\begin{remark} \label{remark:negativereverse}
Let
\[\eta = (\eta_s, \eta_{s-1}, \dots, \eta_1) := (-\epsilon_s, -\epsilon_{s-1}, \dots, -\epsilon_1).\]
For every pair of consecutive integers $(\eta_{t+1}, \eta_{t})$ in $\eta$, let $\Gamma^m_{\eta_{t+1}, \eta_t}$ be the digraph (as in Subsection \ref{subsection:weighteddigraphs}) with the vertex set  
\[\{x_{0,t+1}, \dots, x_{m-1,t+1}, x_{0,t}, \dots, x_{m-1,t}\},\]
and each edge (if any) of $\Gamma^m_{\eta_{t+1}, \eta_t}$ has source $x_{j,t+1}$ and target $x_{i,t}$ for some $i, j \in \{0, \dots, m-1\}$. Let $\Gamma^m_{\eta}$ be the (nondisjoint if $s \geq 2$) union of $\Gamma^m_{\eta_{t+1}, \eta_{t}}$. 

The digraph $\Gamma^m_{\eta_{t+1}, \eta_t}$ can be obtained from $\Gamma^m_{\epsilon_t, \epsilon_{t+1}}$ by first reversing the direction of all edges (if any), and then taking additive inverses of all the weights. As a result, $\Gamma^m_{\eta}$ can be obtained from $\Gamma^m_{\epsilon}$ by first reversing the direction of all edges (if any), and then taking additive inverses of all the weights.
\end{remark}

\begin{proposition} \label{proposition:circularzero}
For every circular sequence of integers $\epsilon = (\epsilon_1, \epsilon_2, \dots, \epsilon_s)$, if $\Gamma^m_{\epsilon}$ contains a circular digraph, then each circular digraph of $\Gamma^m_{\epsilon}$ contains exactly $s$ vertices and $\sum_{i=1}^s \epsilon_i = 0$.
\end{proposition}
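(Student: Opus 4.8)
The plan is to exploit the rigid ``layered'' shape of $\Gamma^m_\epsilon$. First I would dispose of the case $s=1$ directly from Remark \ref{remark:digraph1}: if $\epsilon_1\neq 0$ the digraph has no edges and hence no circular digraph, so there is nothing to prove; if $\epsilon_1=0$ the circular digraphs are exactly the $m$ weight-$0$ loops, each of which has $1=s$ vertex and satisfies $\sum_i\epsilon_i=\epsilon_1=0$. So assume $s\geq 2$. I would partition the vertex set of $\Gamma^m_\epsilon$ into the $s$ \emph{columns} $V_t=\{x_{0,t},\dots,x_{m-1,t}\}$, $t\in I_s$ (indices mod $s$); these are pairwise disjoint. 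Inspecting Cases (1)--(11), every edge of the piece $\Gamma^m_{\epsilon_t,\epsilon_{t+1}}$ runs from some $x_{i,t}\in V_t$ to some $x_{j,t+1}\in V_{t+1}$, and these edges are ``parallel'': by \eqref{equation:changeofweight} each of them has the common weight $w_t:=\mathrm{lv}(\epsilon_t,\epsilon_{t+1})=\max(\epsilon_t,0)+\min(\epsilon_{t+1},0)$ and satisfies $j=i+w_t$ (so the set of edges of $\Gamma^m_{\epsilon_t,\epsilon_{t+1}}$ realizes the injective partial shift $i\mapsto i+w_t$ from $V_t$ to $V_{t+1}$). Consequently each vertex of $\Gamma^m_\epsilon$ lies in exactly one column, has at most one outgoing edge (into the next column) and at most one incoming edge (from the previous column); in particular its degree in $\Gamma^m_\epsilon$ is at most $2$.

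Next I would analyze a circular digraph $\mathcal{C}\subseteq\Gamma^m_\epsilon$. Since $\mathcal{C}$ is connected and every vertex has degree $2$ in it, while the in- and out-degrees are each at most $1$, every vertex of $\mathcal{C}$ has in-degree and out-degree exactly $1$; hence $\mathcal{C}$ is a single directed cycle $v_1\to v_2\to\cdots\to v_k\to v_1$ with $v_1,\dots,v_k$ distinct. Writing $v_a\in V_{t_a}$, each edge raises the column index by $1$ mod $s$, so $t_{a+1}\equiv t_a+1\pmod s$ and, closing the cycle, $s\mid k$; write $k=qs$ with $q\geq 1$. Because the edges out of column $t$ all shift the index by the fixed amount $w_t$, following $\mathcal{C}$ from $v_1=x_{i_1,t_1}$ for one full turn of $s$ steps lands at $x_{i_1+W,t_1}$, where $W:=\sum_{t=1}^s w_t$, and after the full $qs$ steps at $x_{i_1+qW,t_1}$ (here the indices are honest integers in $\{0,\dots,m-1\}$, no wrap-around, so these equalities are equalities of integers). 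Since $\mathcal{C}$ returns to $v_1$ we get $i_1+qW=i_1$, hence $W=0$. If $q\geq 2$ then $s+1\leq 2s\leq k$, so $v_{s+1}$ is among the distinct vertices of $\mathcal{C}$; but following $s$ steps from $v_1$ gives $v_{s+1}=x_{i_1+W,t_1}=x_{i_1,t_1}=v_1$, a contradiction. Therefore $q=1$, i.e.\ $k=s$, so $\mathcal{C}$ has exactly $s$ vertices.

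Finally I would convert $W=0$ into the asserted identity. Reindexing the second sum by circularity ($\epsilon_{s+1}=\epsilon_1$) and using $\max(x,0)+\min(x,0)=x$,
\[
W=\sum_{t=1}^s\mathrm{lv}(\epsilon_t,\epsilon_{t+1})=\sum_{t=1}^s\max(\epsilon_t,0)+\sum_{t=1}^s\min(\epsilon_{t+1},0)=\sum_{t=1}^s\bigl(\max(\epsilon_t,0)+\min(\epsilon_t,0)\bigr)=\sum_{t=1}^s\epsilon_t,
\]
so $\sum_{t=1}^s\epsilon_t=W=0$. Since the argument applies to every circular digraph of $\Gamma^m_\epsilon$, this proves the proposition.

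I expect the only delicate point to be the first paragraph: one must actually verify, case by case, that the edges of each $\Gamma^m_{\epsilon_t,\epsilon_{t+1}}$ form a (single-weight) injective partial matching between consecutive columns — everything afterwards is then essentially forced. A minor bookkeeping caveat is that a circular digraph can degenerate to a $2$-cycle (when $s=2$) or a loop (when $s=1$), but ``$k=s$ vertices'' and ``$\sum_i\epsilon_i=0$'' hold verbatim in those cases as well.
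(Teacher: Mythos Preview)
Your proof is correct and follows essentially the same approach as the paper's: both observe that edges advance the column index by one and shift the row index by the constant $w_t=\mathrm{lv}(\epsilon_t,\epsilon_{t+1})$, deduce that a cycle has length $qs$ with total row-shift $qW=0$, conclude $W=0$ and hence $q=1$, and finish with the identity $\sum_t\mathrm{lv}(\epsilon_t,\epsilon_{t+1})=\sum_t\epsilon_t$. Your write-up is slightly more explicit (handling $s=1$ separately and spelling out the in/out-degree $\leq 1$ structure), but there is no substantive difference in strategy.
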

\begin{proof}
If $\Gamma^m_{\epsilon}$ contains a circular digraph, then the number of vertices of the circular digraph is a multiple of $s$ because the origin and the terminal of a circular digraph must be the same. Suppose that there is a circular digraph that contains $ts$ vertices. Let 
\[x_{l_{11},1}, x_{l_{12}, 2}, \dots, x_{l_{1s},s}, x_{l_{21},1}, x_{l_{22},2}, \dots, x_{l_{2s},s}, \dots,  x_{l_{t1},1}, x_{l_{t2},2}, \dots, x_{l_{ts},s}\]
denote the sequence of its vertices. By considering the difference
\begin{align*}
l_{(j+1)1} - l_{j1} &= (l_{j2} - l_{j1}) + (l_{j3}-l_{j2}) + \cdots + (l_{js} - l_{j(s-1)}) + (l_{(j+1)1} - l_{js}), \\
&= \mathrm{lv}(\epsilon_1,\epsilon_2)+ \mathrm{lv}(\epsilon_2,\epsilon_3) + \cdots + \mathrm{lv}(\epsilon_{s-1},\epsilon_s) + \mathrm{lv}(\epsilon_s,\epsilon_1)
\end{align*}
for all $j \in I_{t-1}$, we have
\[l_{21} - l_{11} = l_{31} - l_{21} = \cdots =  l_{t1} - l_{(t-1)1} = l_{11} - l_{t1} =: \iota.\]
As $t\iota = (l_{21}-l_{11})+(l_{31}-l_{21})+ \cdots + (l_{t1}-l_{(t-1)1})+(l_{11}-l_{t1}) = 0,$
we know that $\iota=0$ and thus $l_{21} = l_{11}$. This implies that $t=1$ and $\sum_{i=1}^s \mathrm{lv}(\epsilon_i, \epsilon_{i+1})=0$, where $\epsilon_{s+1} := \epsilon_1$.

To show that $\sum_{i=1}^s \epsilon_i = 0$, note that for every $x \in \mathbb{R}$, we have 
\begin{equation} \label{equation:basicmaxminrule}
\textrm{max}\{x,0\}+\textrm{min}\{x,0\}=x.
\end{equation}
We have
\begin{align*}
\sum_{i=1}^s \epsilon_i &= \sum_{i=1}^s (\textrm{max}\{\epsilon_i,0\} + \textrm{min}\{\epsilon_i,0\}) & \text{as } \eqref{equation:basicmaxminrule}\\
&= \sum_{i=1}^s (\textrm{max}\{\epsilon_i,0\} + \textrm{min}\{\epsilon_{i+1},0\}) & \text{as } \epsilon_{s+1}:=\epsilon_1 \\
&= \sum_{i=1}^s \mathrm{lv}(\epsilon_i, \epsilon_{i+1})=0. & \text{as } \eqref{equation:changeofweight} & \qedhere\\
\end{align*}
\end{proof}

\begin{corollary} \label{corollary:connectedcomponentfirstcalc}
Let $\epsilon$ be a circular sequence of $s$ integers (thus $|\epsilon|=s$). We have $w(\Gamma^m_{\epsilon}) = c(\Gamma^m_{\epsilon}) \times s$.
\end{corollary}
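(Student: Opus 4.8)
The plan is to count the vertices of $\Gamma^m_\epsilon$ in two ways. By Proposition~\ref{proposition:circularzero}, every circular digraph occurring in $\Gamma^m_\epsilon$ has exactly $s$ vertices; hence the circular digraphs together contain exactly $c(\Gamma^m_\epsilon)\cdot s$ vertices. On the other hand, in a circular digraph every vertex has degree $2$, so each vertex is the source of exactly one edge and the target of exactly one edge. This sets up a bijection between the vertex set of a circular digraph and its edge set, so a circular digraph with $s$ vertices has exactly $s$ edges. Summing over all circular digraphs of $\Gamma^m_\epsilon$ gives that the total number of edges in all circular digraphs is $c(\Gamma^m_\epsilon)\cdot s$, which is precisely $w(\Gamma^m_\epsilon)$ by definition.

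Concretely, I would argue as follows. First handle the trivial case $c(\Gamma^m_\epsilon)=0$, where both sides are $0$. Otherwise, fix a circular digraph $\mathcal{C}$ in $\Gamma^m_\epsilon$. By Proposition~\ref{proposition:circularzero}, $|V_{\mathcal{C}}|=s$. Since $\mathcal{C}$ is (by the definition of a circular digraph recalled before Proposition~\ref{proposition:firstcalc}) a connected graph with exactly one cycle in which every vertex has degree $2$, it is itself a single cycle, so $|E_{\mathcal{C}}|=|V_{\mathcal{C}}|=s$; this is the standard fact that a finite connected graph with $|E|=|V|$ is unicyclic, or directly that a $2$-regular connected graph on $s$ vertices is an $s$-cycle. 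Then
\[
w(\Gamma^m_\epsilon)=\sum_{\mathcal{C}\text{ circular in }\Gamma^m_\epsilon}|E_{\mathcal{C}}|=\sum_{\mathcal{C}\text{ circular in }\Gamma^m_\epsilon}s=c(\Gamma^m_\epsilon)\cdot s.
\]

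There is essentially no obstacle here: the statement is a direct bookkeeping consequence of the structural description in Proposition~\ref{proposition:circularzero} (each circular digraph has exactly $s$ vertices) together with the elementary graph-theoretic fact that a connected $2$-regular graph on $s$ vertices has $s$ edges. The only point that needs a word of care is making sure the footnote convention about a one-vertex, one-edge circular graph (where the sole vertex still has ``degree $2$'') is consistent with the count $|E_{\mathcal{C}}|=|V_{\mathcal{C}}|$, i.e. the degenerate loop case $s=1$ still has one edge. After that the corollary follows immediately.
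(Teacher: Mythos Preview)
Your argument is correct and follows the same approach as the paper: invoke Proposition~\ref{proposition:circularzero} to conclude each circular digraph has exactly $s$ vertices, observe that a circular digraph therefore has exactly $s$ edges, and sum over all circular digraphs. The paper's proof is just a terser version of what you wrote.
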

\begin{proof}
By Proposition \ref{proposition:circularzero}, each circular digraph has $s$ vertices and thus $s$ edges. Hence $w(\Gamma^m_{\epsilon}) = c(\Gamma^m_{\epsilon}) \times s$.
\end{proof}

\subsection{First Reduction} \label{subs:firstreduction}

\begin{theorem}[First Reduction] \label{firstreduction}
Let $\epsilon = (\epsilon_1, \epsilon_2, \dots, \epsilon_s)$ be a circular sequence of $s \geq 2$ integers. Let $\epsilon' = (\epsilon_1+\epsilon_2, \epsilon_3, \dots, \epsilon_s)$. If $\epsilon_1 \epsilon_{2} > 0$, then $c(\Gamma^m_{\epsilon}) = c(\Gamma^m_{\epsilon'})$ and $\ell(\Gamma^m_{\epsilon}) = \ell(\Gamma^m_{\epsilon'})$.
\end{theorem}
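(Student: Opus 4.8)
The plan is to analyze the local structure of $\Gamma^m_\epsilon$ around the ``middle'' column of vertices $\{x_{0,2},\dots,x_{m-1,2}\}$ associated to the index being eliminated, and to show that passing from $\epsilon$ to $\epsilon'=(\epsilon_1+\epsilon_2,\epsilon_3,\dots,\epsilon_s)$ amounts to contracting the two-edge-layer subgraph $\Gamma^m_{\epsilon_1,\epsilon_2}\cup\Gamma^m_{\epsilon_2,\epsilon_3}$ onto the single-layer subgraph $\Gamma^m_{\epsilon_1+\epsilon_2,\epsilon_3}$ in a way that preserves, componentwise, the property of being a free linear digraph and the property of being a circular digraph. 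By Remark~\ref{remark:negativereverse} (reversing the cycle and negating all entries fixes $\ell$, $c$, $w$, and sends the hypothesis $\epsilon_1\epsilon_2>0$ to itself), it suffices to treat the case $\epsilon_1,\epsilon_2>0$; the case $\epsilon_1,\epsilon_2<0$ then follows. So from now on assume $\epsilon_1>0$ and $\epsilon_2>0$.

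First I would isolate the relevant sub-digraph. The vertices $x_{\ast,2}$ appear in exactly two of the layer-digraphs, namely $\Gamma^m_{\epsilon_1,\epsilon_2}$ (where they are targets) and $\Gamma^m_{\epsilon_2,\epsilon_3}$ (where they are sources); no other layer touches them. Using \eqref{equation:changeofweight}, the edges into $x_{j,2}$ from column $1$ are ``$x_{i,1}\to x_{j,2}$ with $j-i=\mathrm{lv}(\epsilon_1,\epsilon_2)=\epsilon_1$'' (since $\epsilon_1>0$ and, in every case with $\epsilon_2$, $\min(\epsilon_2,0)=0$ once $\epsilon_2\ge 0$), and the edges out of $x_{i,2}$ to column $3$ are ``$x_{i,2}\to x_{j,3}$ with $j-i=\mathrm{lv}(\epsilon_2,\epsilon_3)=\epsilon_2+\min(\epsilon_3,0)$''. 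Crucially, because $\epsilon_2>0$, the source index $i$ in the layer $(\epsilon_2,\epsilon_3)$ ranges over exactly the same set of indices that are hit as targets in the layer $(\epsilon_1,\epsilon_2)$ — one checks directly from the case-by-case formulas (Cases (2),(5),(8),(10),(11) are the ones with $\epsilon_1>0$, and within each, the set of valid targets in column $2$ coincides with the set of valid sources in column $2$ for the next layer, any mismatch producing only zero-vertices). Hence each $x_{j,2}$ has in-degree $\le 1$ and out-degree $\le 1$, so composing the two edges gives a well-defined bijection-like correspondence: an edge $x_{i,1}\to x_{j,2}$ followed by $x_{j,2}\to x_{l,3}$ collapses to a single edge $x_{i,1}\to x_{l,3}$ of weight $(j-i)+(l-j)=l-i=\mathrm{lv}(\epsilon_1,\epsilon_2)+\mathrm{lv}(\epsilon_2,\epsilon_3)$. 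The second step is to verify that this collapsed weight equals $\mathrm{lv}(\epsilon_1+\epsilon_2,\epsilon_3)$: since $\epsilon_1,\epsilon_2>0$ we have $\epsilon_1+\epsilon_2>0$, so $\mathrm{lv}(\epsilon_1+\epsilon_2,\epsilon_3)=\epsilon_1+\epsilon_2+\min(\epsilon_3,0)=\mathrm{lv}(\epsilon_1,\epsilon_2)+\mathrm{lv}(\epsilon_2,\epsilon_3)$, as wanted; one must also confirm the index ranges match, i.e. that the set of sources $i$ in column $1$ that survive (are not forced to zero) through the two-layer composition is exactly the set of sources in the single layer $\Gamma^m_{\epsilon_1+\epsilon_2,\epsilon_3}$, and likewise for surviving targets in column $3$, again by comparing the explicit case descriptions (the relevant cases for $(\epsilon_1+\epsilon_2,\epsilon_3)$ being Cases (2),(5),(6),(7),(10),(11) depending on the sign and size of $\epsilon_3$).

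The third step packages this into the topological statement. Define a graph map from $\Gamma^m_\epsilon$ to $\Gamma^m_{\epsilon'}$ that is the identity on all columns $x_{\ast,t}$ with $t\neq 2$, deletes the column $x_{\ast,2}$, and sends each length-two path through column $2$ to the corresponding single edge in the $(\epsilon_1+\epsilon_2,\epsilon_3)$-layer (all other layers $\Gamma^m_{\epsilon_3,\epsilon_4},\dots,\Gamma^m_{\epsilon_s,\epsilon_1}$ are literally unchanged). Since every $x_{\ast,2}$ vertex has degree $\le 2$ and, when degree $2$, sits on exactly one such path with one edge in and one edge out, this map is a homeomorphism of the underlying topological graphs after contracting the degree-$2$ vertices of column $2$ — and the degree-$0$ or degree-$1$ vertices of column $2$ are precisely vertices forced to zero, which by the discussion preceding the definition of free linear digraph force their whole component to be all-zero, so such a component is neither a free linear digraph nor a circular digraph on either side and contributes $0$ to both $\ell$ and $c$. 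Therefore the map induces a bijection between the free linear digraphs of $\Gamma^m_\epsilon$ and those of $\Gamma^m_{\epsilon'}$, and between the circular digraphs of the two, giving $\ell(\Gamma^m_\epsilon)=\ell(\Gamma^m_{\epsilon'})$ and $c(\Gamma^m_\epsilon)=c(\Gamma^m_{\epsilon'})$.

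I expect the main obstacle to be the bookkeeping in the second step: verifying, uniformly across the eleven cases, that the index set of surviving sources/targets and the weight of the collapsed edge in the two-layer subgraph $\Gamma^m_{\epsilon_1,\epsilon_2}\cup\Gamma^m_{\epsilon_2,\epsilon_3}$ agree exactly with those of the one-layer subgraph $\Gamma^m_{\epsilon_1+\epsilon_2,\epsilon_3}$, including the edge cases where $\epsilon_1$, $\epsilon_2$, or $\epsilon_1+\epsilon_2$ exceeds $m$ (handled via the Remark after \eqref{equation:changeofweight}, which says the digraph is unchanged upon truncating an entry to $\pm m$, so one may assume $\epsilon_1,\epsilon_2\le m$ — though $\epsilon_1+\epsilon_2$ may still exceed $m$, which is exactly the situation Cases (6),(7),(11) are designed for). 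Once that combinatorial identity of layer-digraphs is pinned down, the reduction to $\epsilon_1,\epsilon_2>0$ via Remark~\ref{remark:negativereverse} and the topological contraction argument are routine.
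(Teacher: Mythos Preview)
Your plan follows the paper's approach closely: reduce to $\epsilon_1,\epsilon_2>0$ via Remark~\ref{remark:negativereverse} (the paper's Lemma~\ref{lemma:firstredbasiclemma}), then compare the two-layer subgraph $\Gamma^m_{\epsilon_1,\epsilon_2}\cup\Gamma^m_{\epsilon_2,\epsilon_3}$ with the one-layer $\Gamma^m_{\epsilon_1+\epsilon_2,\epsilon_3}$. The paper carries this out by explicit diagrams in cases depending on $\epsilon_3$ and on whether $\epsilon_1+\epsilon_2<m$ (Lemmas~\ref{lemma:firstredlemma1} and~\ref{lemma:firstredlemma2}, plus the degenerate $s=2$ case in Lemma~\ref{lemma:firstredcase2}); you package the same comparison as a contraction map. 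Your weight identity $\mathrm{lv}(\epsilon_1,\epsilon_2)+\mathrm{lv}(\epsilon_2,\epsilon_3)=\mathrm{lv}(\epsilon_1+\epsilon_2,\epsilon_3)$ is correct, as is the implicit observation that the boundary layer $(\epsilon_s,\epsilon_1)$ coincides with $(\epsilon_s,\epsilon_1+\epsilon_2)$ since its second entry stays positive.

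There is, however, a genuine gap in your third step. You assert that the degree-$0$ or degree-$1$ vertices in column~$2$ are ``precisely vertices forced to zero'', and earlier that the set of targets in column~$2$ from layer $(\epsilon_1,\epsilon_2)$ coincides with the set of sources for layer $(\epsilon_2,\epsilon_3)$ up to zero vertices. Both claims are false. Take $0<\epsilon_1,\epsilon_2$ with $\epsilon_1+\epsilon_2<m$ and $\epsilon_3\ge 0$ (Case~(1) of Lemma~\ref{lemma:firstredlemma1}): the targets in column~$2$ are $x_{\epsilon_1,2},\dots,x_{m-1,2}$ while the sources are $x_{0,2},\dots,x_{m-\epsilon_2-1,2}$, so the vertices $x_{m-\epsilon_2,2},\dots,x_{m-1,2}$ have in-degree~$1$, out-degree~$0$, and are \emph{not} marked zero. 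The correct statement is asymmetric: column-$2$ vertices of in-degree~$0$ are always zero (they are exactly $x_{0,2},\dots,x_{\epsilon_1-1,2}$, by Case~(10) or~(11) applied to $(\epsilon_1,\epsilon_2)$), but those of out-degree~$0$ need not be. Each such non-zero terminal $x_{j,2}$ ends a linear component whose penultimate vertex $x_{j-\epsilon_1,1}$ then has out-degree~$0$ in $\Gamma^m_{\epsilon'}$ (one checks $j-\epsilon_1\ge m-\epsilon_1-\epsilon_2$ places it outside the source range of the $(\epsilon_1+\epsilon_2,\epsilon_3)$-layer), so deleting the edge $x_{j-\epsilon_1,1}\to x_{j,2}$ shortens the component by one vertex without altering whether it is free or circular. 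This is precisely what the paper's diagrams encode, and it is the missing ingredient in your contraction argument; once you insert it, the bijection on free linear and on circular components goes through.
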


We will prove Theorem \ref{firstreduction} in a series of lemmas. The next lemma proves Theorem \ref{firstreduction} when $\epsilon = (\epsilon_1, \epsilon_2)$ is a circular sequence of two integers.

\begin{lemma} \label{lemma:firstredcase2}
Let $\epsilon = (\epsilon_1, \epsilon_2)$ be a circular sequence of two integers with $\epsilon_1\epsilon_2 > 0$. Let $\epsilon' = (\epsilon_1+\epsilon_2)$. We have $c(\Gamma^m_{\epsilon}) = c(\Gamma^m_{\epsilon'})=0$ and $\ell(\Gamma^m_{\epsilon}) = \ell(\Gamma^m_{\epsilon'})=0$. 
\end{lemma}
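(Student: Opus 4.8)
The plan is to unwind the definitions on both sides and check that, for $s=2$, neither digraph can ever contain a circular component or a free linear component. First I would record the hypothesis: since $\epsilon_1\epsilon_2>0$, either both $\epsilon_i$ are strictly positive or both are strictly negative. By Remark \ref{remark:negativereverse}, reversing all edges and negating all weights sends $\Gamma^m_{\epsilon}$ to $\Gamma^m_{\eta}$ with $\eta=(-\epsilon_2,-\epsilon_1)$, and this operation preserves the property of being a circular digraph and the property of being a free linear digraph; similarly it sends $\Gamma^m_{\epsilon'}$ to $\Gamma^m_{(-\epsilon_1-\epsilon_2)}$. Hence it suffices to treat the case $\epsilon_1,\epsilon_2>0$.

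Next I would handle the right-hand side. For a circular sequence of length one with a positive entry, Remark \ref{remark:digraph1} (the case $\epsilon_1>0$) says $\Gamma^m_{(\epsilon_1+\epsilon_2)}$ consists of $m$ zero vertices and no edges, so it has no free linear digraph and no circular digraph: $c(\Gamma^m_{\epsilon'})=\ell(\Gamma^m_{\epsilon'})=0$. For the left-hand side, $\Gamma^m_{\epsilon}$ is the nondisjoint union of $\Gamma^m_{\epsilon_1,\epsilon_2}$ and $\Gamma^m_{\epsilon_2,\epsilon_1}$; since $\epsilon_1>0$ and $\epsilon_2>0$, the pair $(\epsilon_1,\epsilon_2)$ lies in $S_{10,m}$ or $S_{11,m}$ (according to whether $\epsilon_1<m$ or $\epsilon_1\geq m$), and likewise for $(\epsilon_2,\epsilon_1)$. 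In every one of these cases, Equations \eqref{equation:breakdown9} and \eqref{equation:breakdown11} force the entire right column $x_{0,t+1},\dots,x_{m-1,t+1}$ to be zero: indeed in Case (10) the vertices $x_{0,t+1},\dots,x_{\epsilon_t-1,t+1}$ are zero directly, while $x_{\epsilon_t,t+1},\dots,x_{m-1,t+1}$ are the targets of edges from $x_{0,t},\dots,x_{m-\epsilon_t-1,t}$, and those sources are themselves the right column of the other building block $\Gamma^m_{\epsilon_2,\epsilon_1}$, hence zero — so by the propagation principle of Subsection~2.3 every vertex of $\Gamma^m_{\epsilon}$ is a zero vertex. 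Consequently $\Gamma^m_{\epsilon}$ has no free linear digraph, and since a circular digraph cannot contain a zero vertex, it has no circular digraph either, giving $c(\Gamma^m_{\epsilon})=\ell(\Gamma^m_{\epsilon})=0$.

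The only mildly delicate point is the bookkeeping of exactly which vertices are zero and why the propagation closes up: one must observe that in the length-two union the two ``$x_{\bullet,1}$'' columns of $\Gamma^m_{\epsilon_1,\epsilon_2}$ and $\Gamma^m_{\epsilon_2,\epsilon_1}$ are identified, and likewise the two ``$x_{\bullet,2}$'' columns, so a vertex forced to zero as a right-column entry of one block is simultaneously a left-column source in the other block, and the implication ``source zero $\Rightarrow$ target zero'' then sweeps through all $2m$ vertices. I expect this identification-and-propagation argument to be the main (though still routine) obstacle; the rest is a direct appeal to the case analysis of Subsection~2.2 and to Remark \ref{remark:digraph1}. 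I would finish by remarking that the reduction $\epsilon\mapsto\epsilon'$ here collapses a length-two sequence to a length-one sequence, which is why both invariants are identically zero on the two sides rather than merely equal to each other in a nontrivial way.
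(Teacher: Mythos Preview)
Your proposal is correct and follows essentially the same route as the paper: use Cases (10)/(11) (positive case) to see that every vertex of $\Gamma^m_{\epsilon}$ is a zero vertex, and Remark \ref{remark:digraph1} to see the same for $\Gamma^m_{\epsilon'}$; then conclude $c=\ell=0$ on both sides. The only difference is that the paper handles the negative case $\epsilon_1,\epsilon_2<0$ directly via Cases (8)/(9) in one parenthetical sentence, whereas you reduce to the positive case through Remark \ref{remark:negativereverse}; both are equally short. One small wording issue: when you say ``those sources are themselves the right column of the other building block, hence zero,'' note that only the top $\epsilon_2$ entries of that right column are \emph{directly} marked zero, and the rest become zero only after iterating the propagation you describe in your third paragraph --- but since every vertex of in-degree $0$ in $\Gamma^m_{\epsilon}$ is already marked zero and (by $\epsilon_1+\epsilon_2\neq 0$, cf.\ Proposition \ref{proposition:circularzero}) there is no circular component, the sweep does close up and your conclusion stands.
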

\begin{proof}
If $\epsilon_1, \epsilon_2 > 0$ (resp. $\epsilon_1, \epsilon_2 < 0$), then from Cases (10) and (11) (resp. Cases (8) and (9)) in Subsection \ref{subsection:weighteddigraphs}, we know that all vertices of $\Gamma^m_{\epsilon}$ are regarded as zero. As $\epsilon_1+\epsilon_2 \neq 0$, by Remark \ref{remark:digraph1}, we know that all vertices of $\Gamma^m_{\epsilon'}$ are regarded as zero. Thus we have $c(\Gamma^m_{\epsilon}) = c(\Gamma^m_{\epsilon'})=0$ and $\ell(\Gamma^m_{\epsilon}) = \ell(\Gamma^m_{\epsilon'})=0$.
\end{proof}

Next we study the case when the circular sequence $\epsilon$ has more than two integers.

\begin{lemma} \label{lemma:firstredbasiclemma}
To prove Theorem \ref{firstreduction}, it is enough to assume $\epsilon_1, \epsilon_2 > 0$.
\end{lemma}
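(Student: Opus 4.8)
The plan is to reduce the general First Reduction statement to the special case $\epsilon_1,\epsilon_2>0$ by invoking the symmetry of Remark~\ref{remark:negativereverse}. Recall that if $\eta=(-\epsilon_s,\dots,-\epsilon_1)$ is the ``negative reverse'' of $\epsilon$, then $\Gamma^m_{\eta}$ is obtained from $\Gamma^m_{\epsilon}$ by reversing all edges and negating all weights. This operation sends linear digraphs to linear digraphs and circular digraphs to circular digraphs, and it carries zero vertices to zero vertices (the defining relation $\mathrm{source}^{p^{w+1}}=\mathrm{target}$ becomes $\mathrm{target}^{p^{-w+1}}=\mathrm{source}$, which forces the same vertices to vanish). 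Hence $c(\Gamma^m_{\epsilon})=c(\Gamma^m_{\eta})$ and $\ell(\Gamma^m_{\epsilon})=\ell(\Gamma^m_{\eta})$, and likewise for $\epsilon'$ and its negative reverse $\eta'$.

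First I would observe that the hypothesis $\epsilon_1\epsilon_2>0$ splits into two cases: $\epsilon_1,\epsilon_2>0$ or $\epsilon_1,\epsilon_2<0$. Suppose we are in the second case. Form $\eta=(-\epsilon_s,-\epsilon_{s-1},\dots,-\epsilon_2,-\epsilon_1)$; its last two entries (in cyclic order, i.e. the entries $\eta_{s}=-\epsilon_2$ and $\eta_1=-\epsilon_1$ wrapping around, or after a cyclic rotation the first two entries) are $-\epsilon_2,-\epsilon_1$, both positive. Applying the First Reduction in the already-handled positive case to merge these two adjacent positive entries of $\eta$ produces the sequence $\eta'=(-\epsilon_s,\dots,-\epsilon_3,-\epsilon_2-\epsilon_1)$, which is precisely the negative reverse of $\epsilon'=(\epsilon_1+\epsilon_2,\epsilon_3,\dots,\epsilon_s)$. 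By the symmetry of the previous paragraph, $c(\Gamma^m_{\epsilon})=c(\Gamma^m_{\eta})=c(\Gamma^m_{\eta'})=c(\Gamma^m_{\epsilon'})$ and similarly for $\ell$. Thus the negative case follows from the positive case, which is what the lemma asserts.

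The one point requiring care — and the only real content of this lemma — is making sure the cyclic bookkeeping is correct: the First Reduction as stated merges the \emph{first two} entries $\epsilon_1,\epsilon_2$, but in $\eta$ the two positive entries $-\epsilon_2,-\epsilon_1$ sit at the end of the tuple wrapping around to the front. Since $\epsilon$ is a circular sequence, a cyclic rotation of $\eta$ does not change $\Gamma^m_{\eta}$ (the digraph is built from all consecutive pairs including the wrap-around pair $(\eta_s,\eta_1)$), so I may first cyclically rotate $\eta$ to bring $-\epsilon_2,-\epsilon_1$ into the first two positions, apply the positive-case First Reduction there, and then note that the resulting merged sequence agrees with the negative reverse of $\epsilon'$ up to a cyclic rotation as well. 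I expect the main (very minor) obstacle to be stating this rotation-invariance cleanly rather than any substantive difficulty; everything else is a formal consequence of Remark~\ref{remark:negativereverse}.
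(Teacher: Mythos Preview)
Your approach is correct and is essentially the paper's own proof: both arguments reduce the case $\epsilon_1,\epsilon_2<0$ to the case $\epsilon_1,\epsilon_2>0$ via the negative-reverse symmetry of Remark~\ref{remark:negativereverse}, observing that $\eta'$ (the merge of the two positive entries $-\epsilon_1,-\epsilon_2$ in $\eta$) is exactly the negative reverse of $\epsilon'$, and then chaining the equalities $c(\Gamma^m_{\epsilon})=c(\Gamma^m_{\eta})=c(\Gamma^m_{\eta'})=c(\Gamma^m_{\epsilon'})$ (and likewise for $\ell$). One cosmetic point: your labeling ``$\eta_s=-\epsilon_2$, $\eta_1=-\epsilon_1$'' is inconsistent with either natural indexing of $\eta$; the paper avoids your cyclic-rotation worry entirely by indexing $\eta$ as $(\eta_s,\dots,\eta_1)$ with $\eta_i:=-\epsilon_i$, so that $\eta_1,\eta_2$ are already the adjacent positive pair to which Theorem~\ref{firstreduction} applies directly, with $\eta'=(\eta_s,\dots,\eta_3,\eta_1+\eta_2)$.
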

\begin{proof}
Suppose $\epsilon_1, \epsilon_2 < 0$. Let 
\begin{equation*}
\begin{aligned}
\eta    & =   (\eta_s, \dots, \eta_3, \eta_2, \eta_1) & =  (-\epsilon_s, \dots, -\epsilon_3, -\epsilon_2, -\epsilon_1), \\
\eta'   & =  (\eta'_s, \dots, \eta'_3, \eta'_1)   & =  (-\epsilon_s, \dots, -\epsilon_3, -\epsilon_2-\epsilon_1).
\end{aligned}
\end{equation*}
By Remark \ref{remark:negativereverse}, we know that $\Gamma^m_{\eta}$ (resp. $\Gamma^m_{\eta'}$) can be obtained from $\Gamma^m_{\epsilon}$ (resp. $\Gamma^m_{\epsilon'}$) by reversing all the edges and taking additive inverses of all the weights. As a result, we have 
\[c(\Gamma^m_{\epsilon}) = c(\Gamma^m_{\eta}), \; \ell(\Gamma^m_{\epsilon}) = \ell(\Gamma^m_{\eta}), \; c(\Gamma^m_{\epsilon'}) = c(\Gamma^m_{\eta'}), \; \ell(\Gamma^m_{\epsilon'}) = \ell(\Gamma^m_{\eta'}).\]
If Theorem \ref{firstreduction} is true when $\epsilon_1, \epsilon_2 > 0$, then 
\[c(\Gamma^m_{\eta}) = c(\Gamma^m_{\eta'}), \qquad \ell(\Gamma^m_{\eta}) = \ell(\Gamma^m_{\eta'})\]
as $\eta_1, \eta_2 > 0$. Hence
\[c(\Gamma^m_{\epsilon}) = c(\Gamma^m_{\eta}) = c(\Gamma^m_{\eta'}) = c(\Gamma^m_{\epsilon'}), \qquad \ell(\Gamma^m_{\epsilon}) = \ell(\Gamma^m_{\eta}) = \ell(\Gamma^m_{\eta'}) = \ell(\Gamma^m_{\epsilon'}).\qedhere\]
\end{proof}

For the remaining of this subsection, we assume that $\epsilon_1, \epsilon_{2} > 0$. Let 
\[\Gamma^m := \Gamma^m_{\epsilon_1, \epsilon_2} \cup \Gamma^m_{\epsilon_2, \epsilon_3}, \qquad {\Gamma'}^m := \Gamma^m_{\epsilon_1+\epsilon_2, \epsilon_3}.\]
For consistency, we use $\{x_{0,1}, \dots, x_{m-1,1}, x_{0,3}, \dots, x_{m-1,3}\}$ as the vertex set of ${\Gamma'}^m$.  

\begin{lemma} \label{lemma:firstredlemma1}
Let $\epsilon = (\epsilon_1, \epsilon_2, \dots, \epsilon_s)$ be a circular sequence of at least three integers. Let $\epsilon' = (\epsilon_1+\epsilon_2, \epsilon_3, \dots, \epsilon_s)$. If $\epsilon_1, \epsilon_{2} > 0$ and $\epsilon_1+\epsilon_2 < m$, then $c(\Gamma^m_{\epsilon}) = c(\Gamma^m_{\epsilon'})$ and $\ell(\Gamma^m_{\epsilon}) = \ell(\Gamma^m_{\epsilon'})$.
\end{lemma}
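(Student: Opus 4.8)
The plan is to reduce everything to a local comparison of the two digraphs $\Gamma^m$ and ${\Gamma'}^m$ defined just before the lemma statement, since the remaining pieces $\Gamma^m_{\epsilon_3,\epsilon_4}, \dots, \Gamma^m_{\epsilon_s,\epsilon_1}$ appear identically in $\Gamma^m_{\epsilon}$ and $\Gamma^m_{\epsilon'}$. Concretely, $\Gamma^m_{\epsilon}$ is obtained from the common "tail" by gluing in $\Gamma^m_{\epsilon_1,\epsilon_2}\cup\Gamma^m_{\epsilon_2,\epsilon_3}$ along the vertex columns indexed by $1$ and $3$ (the column indexed by $2$ is internal), while $\Gamma^m_{\epsilon'}$ is obtained by gluing in ${\Gamma'}^m=\Gamma^m_{\epsilon_1+\epsilon_2,\epsilon_3}$ along the same two columns. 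So it suffices to exhibit a bijection between the connected components of $\Gamma^m$ and those of ${\Gamma'}^m$ that (i) matches free linear digraphs with free linear digraphs and circular digraphs with circular digraphs, and (ii) is compatible with the identification of the boundary columns $\{x_{i,1}\}$ and $\{x_{i,3}\}$ — i.e.\ preserves which boundary vertices are zero and which boundary vertices lie in the same component. Any such component-level equivalence is stable under the gluing with the shared tail, so it will give $c(\Gamma^m_{\epsilon})=c(\Gamma^m_{\epsilon'})$ and $\ell(\Gamma^m_{\epsilon})=\ell(\Gamma^m_{\epsilon'})$.

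Next I would make the local comparison explicit using the case analysis of Subsection \ref{subsection:weighteddigraphs}. Here $\epsilon_1,\epsilon_2>0$ and $0<\epsilon_1+\epsilon_2<m$. The edges of $\Gamma^m_{\epsilon_1,\epsilon_2}$ are governed by the pair $(\epsilon_1,\epsilon_2)$ with $\epsilon_2>0$, which puts us in Case (10) or Case (11) depending on whether $\epsilon_1<m$ or $\epsilon_1\ge m$; in either case every vertex $x_{i,2}$ in the middle column that receives an edge does so from a unique $x_{i',1}$ with $i-i'=\mathrm{lv}(\epsilon_1,\epsilon_2)=\epsilon_1$ (using $\epsilon_2>0$), and the first $\epsilon_1$ (or all $m$) top vertices of column $2$ are zero. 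The edges out of column $2$ are governed by $(\epsilon_2,\epsilon_3)$. Tracing a two-step path $x_{i',1}\to x_{i,2}\to x_{j,3}$ through the middle column, the composite weight is $\mathrm{lv}(\epsilon_1,\epsilon_2)+\mathrm{lv}(\epsilon_2,\epsilon_3)=j-i'$; since $\epsilon_2>0$ one checks $\mathrm{lv}(\epsilon_1,\epsilon_2)+\mathrm{lv}(\epsilon_2,\epsilon_3)=\epsilon_1+\mathrm{lv}(\epsilon_2,\epsilon_3)$, and because $\epsilon_1+\epsilon_2>0$ one also has $\mathrm{lv}(\epsilon_1+\epsilon_2,\epsilon_3)=\epsilon_1+\epsilon_2+\min(\epsilon_3,0)$ — and since $\epsilon_2>0$, $\min(\epsilon_2,0)=0$, so $\mathrm{lv}(\epsilon_2,\epsilon_3)=\epsilon_2+\min(\epsilon_3,0)=\mathrm{lv}(\epsilon_1+\epsilon_2,\epsilon_3)$ exactly when the relevant indices are in range. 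Thus the composite edge $x_{i',1}\to x_{j,3}$ present in $\Gamma^m$ has precisely the weight and endpoints of the edge $x_{i',1}\to x_{j,3}$ of ${\Gamma'}^m$, and the hypothesis $\epsilon_1+\epsilon_2<m$ is what guarantees that the index ranges really do match up (no spurious truncation). The upshot: contracting the middle column of $\Gamma^m$ — replacing each two-step path by its composite and discarding the internal vertices and the internal zero vertices — yields exactly ${\Gamma'}^m$ as a weighted digraph on the boundary columns.

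Finally I would record why this contraction is component-preserving in the required sense. Each middle vertex $x_{i,2}$ has in-degree at most $1$ and out-degree at most $1$ in $\Gamma^m$ (it sits in a linear or circular digraph), so contracting it either splices two edges into one or, if it is a source/terminal, deletes a dangling edge to/from a zero vertex; in all cases the number of free linear digraphs and the number of circular digraphs is unchanged, and two boundary vertices are connected in $\Gamma^m$ iff they are connected in ${\Gamma'}^m$, and a boundary vertex is zero in $\Gamma^m$ iff it is zero in ${\Gamma'}^m$ (a boundary vertex becomes zero precisely when it connects through the middle column to an original zero vertex, and the zero vertices in column $2$ correspond under the contraction to the "missing" initial edges in Case (10)/(11) for $(\epsilon_1+\epsilon_2,\epsilon_3)$). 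Re-attaching the common tail $\Gamma^m_{\epsilon_3,\epsilon_4}\cup\cdots\cup\Gamma^m_{\epsilon_s,\epsilon_1}$ to both sides then gives the claimed equalities. The main obstacle I anticipate is the bookkeeping in the previous paragraph: one must carefully split into the subcases $\epsilon_1<m$ vs.\ $\epsilon_1\ge m$ and track exactly which top vertices of columns $2$ and $3$ are marked zero, and verify that the hypothesis $\epsilon_1+\epsilon_2<m$ (as opposed to $\epsilon_1+\epsilon_2\ge m$, which is presumably handled in a separate lemma) ensures the index arithmetic in \eqref{equation:changeofweight} closes up without an off-by-one truncation error.
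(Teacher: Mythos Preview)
Your overall strategy coincides with the paper's: both reduce the claim to a local comparison of $\Gamma^m=\Gamma^m_{\epsilon_1,\epsilon_2}\cup\Gamma^m_{\epsilon_2,\epsilon_3}$ with ${\Gamma'}^m=\Gamma^m_{\epsilon_1+\epsilon_2,\epsilon_3}$, observing that the rest of $\Gamma^m_\epsilon$ and $\Gamma^m_{\epsilon'}$ agree. The paper then verifies the local equivalence by a direct six-way case split on $\epsilon_3$ (namely $\epsilon_3\ge 0$; $-\epsilon_2<\epsilon_3<0$; $\epsilon_3=-\epsilon_2$; $-\epsilon_1-\epsilon_2\le\epsilon_3<-\epsilon_2$; $-m<\epsilon_3<-\epsilon_1-\epsilon_2$; $\epsilon_3\le -m$), drawing both digraphs in each case and checking by inspection. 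Your contraction argument is a more uniform way to reach the same conclusion: the identity $\mathrm{lv}(\epsilon_1,\epsilon_2)+\mathrm{lv}(\epsilon_2,\epsilon_3)=\epsilon_1+\epsilon_2+\min(\epsilon_3,0)=\mathrm{lv}(\epsilon_1+\epsilon_2,\epsilon_3)$ does exactly what the six diagrams are encoding, and avoids the explicit pictures. That said, the paper's case analysis makes the zero-vertex bookkeeping trivially visible, whereas in your approach this is precisely the part you still have to work out.

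Two small corrections to your writeup. First, the line ``$\mathrm{lv}(\epsilon_2,\epsilon_3)=\epsilon_2+\min(\epsilon_3,0)=\mathrm{lv}(\epsilon_1+\epsilon_2,\epsilon_3)$'' is off by $\epsilon_1$; what you want (and what you stated just before) is that the \emph{sum} $\mathrm{lv}(\epsilon_1,\epsilon_2)+\mathrm{lv}(\epsilon_2,\epsilon_3)$ equals $\mathrm{lv}(\epsilon_1+\epsilon_2,\epsilon_3)$. Second, under the hypothesis $\epsilon_1+\epsilon_2<m$ with $\epsilon_1,\epsilon_2>0$ you automatically have $\epsilon_1<m$, so the subcase $\epsilon_1\ge m$ you mention never occurs here; the case split that actually matters (and that the paper carries out) is on the sign and magnitude of $\epsilon_3$. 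More substantively, your description of the contraction is not quite right when a middle vertex is a terminal: the vertices $x_{m-\epsilon_2,2},\ldots,x_{m-1,2}$ receive edges from column~$1$ but have no outgoing edge and are \emph{not} zero, so contracting them is not ``deleting a dangling edge to a zero vertex'' but rather trimming one vertex off the end of a free linear piece. This is harmless for the counts $\ell$ and $c$, but you should say so explicitly rather than fold it into the zero-vertex case.
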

\begin{proof}
We construct $\Gamma^m$ and ${\Gamma'}^m$ in six mutually exclusive cases. 

\begin{case}
\item Suppose that $\epsilon_3 \geq 0$.

\begin{figure}[H]
\centering
\begin{tikzpicture}
\tikzset{vertex/.style = {shape=circle,draw,minimum size=1.5em}}
\tikzset{edge/.style = {->,> = latex'}}
\node (a) at (0,0) {$x_{0,1}$};
\node (b) at (0,-0.5) {$\vdots$};
\node (c) at (0,-1) {$x_{m-\epsilon_1-\epsilon_2-1,1}$};
\node (d) at (0,-1.5) {$\vdots$};
\node (e) at (0,-2) {$x_{m-\epsilon_1-1,1}$};
\node (f) at (0,-2.5) {$x_{m-\epsilon_1,1}$};
\node (g) at (0,-3) {$\vdots$};
\node (h) at (0,-3.5) {$x_{m-1,1}$};
\node (i) at (2,0) {$0$};
\node (j) at (2,-0.5) {$\vdots$};
\node (k) at (2,-1) {$0$};
\node (l) at (2,-1.5) {$x_{\epsilon_1,2}$};
\node (m) at (2,-2) {$\vdots$};
\node (n) at (2,-2.5) {$x_{m-\epsilon_2-1,2}$};
\node (o) at (2,-3) {$\vdots$};
\node (p) at (2,-3.5) {$x_{m-1,2}$};
\node (q) at (4,0) {$0$};
\node (r) at (4,-0.5) {$\vdots$};
\node (s) at (4,-1) {$\vdots$};
\node (t) at (4,-1.5) {$\vdots$};
\node (u) at (4,-2)  {$0$};
\node (v) at (4,-2.5) {$x_{\epsilon_1+\epsilon_2,3}$};
\node (w) at (4,-3) {$\vdots$};
\node (x) at (4,-3.5) {$x_{m-1,3}$};
\draw[edge] (a) -- (l) node[sloped,midway,anchor=center, above] {\tiny $\epsilon_{1}$};
\draw[edge] (c) -- (n) node[sloped,midway,anchor=center, above] {\tiny $\epsilon_{1}$};
\draw[edge] (e) -- (p) node[sloped,midway,anchor=center, above] {\tiny $\epsilon_{1}$};
\draw[edge] (l) -- (v) node[sloped,midway,anchor=center, above] {\tiny $\epsilon_{2}$};
\draw[edge] (n) -- (x) node[sloped,midway,anchor=center, above] {\tiny $\epsilon_{2}$};
\node (aa) at (7,0) {$x_{0,1}$};
\node (bb) at (7,-0.5) {$\vdots$};
\node (cc) at (7,-1) {$x_{m-\epsilon_1-\epsilon_2-1,1}$};
\node (dd) at (7,-1.5) {$x_{m-\epsilon_1-\epsilon_2,1}$};
\node (ee) at (7,-2) {$\vdots$};
\node (hh) at (7,-2.5) {$x_{m-1,1}$};
\node (ii) at (10,0) {$0$};
\node (jj) at (10,-0.5) {$\vdots$};
\node (kk) at (10,-1) {$0$};
\node (ll) at (10,-1.5) {$x_{\epsilon_1+\epsilon_2,3}$};
\node (mm) at (10,-2) {$\vdots$};
\node (pp) at (10,-2.5) {$x_{m-1,3}$};
\draw[edge] (aa) -- (ll) node[sloped,midway,anchor=center, above] {\tiny $\epsilon_{1}+\epsilon_2$};
\draw[edge] (cc) -- (pp) node[sloped,midway,anchor=center, above] {\tiny $\epsilon_{1}+\epsilon_2$};
\end{tikzpicture}
\caption{Digraphs for $\Gamma^m$ and ${\Gamma'}^m$ in Case (1)} \label{fig:case12}
\end{figure}

\item Suppose that $\epsilon_3 < 0$ and $\epsilon_3 \in \{-1, -2, \dots, -\epsilon_2+1\}$.

\begin{figure}[H]
\centering
\begin{tikzpicture}
\tikzset{vertex/.style = {shape=circle,draw,minimum size=1.5em}}
\tikzset{edge/.style = {->,> = latex'}}
\node (a) at (0,0) {$x_{0,1}$};
\node (b) at (0,-0.5) {$\vdots$};
\node (c) at (0,-1) {$x_{m-\epsilon_1-\epsilon_2-1,1}$};
\node (d) at (0,-1.5) {$\vdots$};
\node (e) at (0,-2) {$\vdots$};
\node (f) at (0,-2.5) {$x_{m-\epsilon_1-1,1}$};
\node (g) at (0,-3) {$\vdots$};
\node at (0,-3.5) {$\vdots$};
\node (h) at (0,-4) {$x_{m-1,1}$};
\node (i) at (2,0) {$0$};
\node (j) at (2,-0.5) {$\vdots$};
\node (k) at (2,-1) {$0$};
\node (l) at (2,-1.5) {$x_{\epsilon_1,2}$};
\node (m) at (2,-2) {$\vdots$};
\node (n) at (2,-2.5) {$x_{m-\epsilon_2-1,2}$};
\node (o) at (2,-3) {$\vdots$};
\node at (2,-3.5) {$\vdots$};
\node (p) at (2,-4) {$x_{m-1,2}$};
\node (q) at (4.5,0) {$0$};
\node (r) at (4.5,-0.5) {$\vdots$};
\node (s) at (4.5,-1) {$\vdots$};
\node (t) at (4.5,-1.5) {$0$};
\node (u) at (4.5,-2)  {$x_{\epsilon_1+\epsilon_2+\epsilon_3,3}$};
\node (v) at (4.5,-2.5) {$\vdots$};
\node (w) at (4.5,-3) {$x_{m+\epsilon_3-1,3}$};
\node (x) at (4.5,-3.5) {$\vdots$};
\node (y) at (4.5,-4) {$x_{m-1,3}$};
\draw[edge] (a) -- (l) node[sloped,midway,anchor=center, above] {\tiny $\epsilon_{1}$};
\draw[edge] (c) -- (n) node[sloped,midway,anchor=center, above] {\tiny $\epsilon_{1}$};
\draw[edge] (f) -- (p) node[sloped,midway,anchor=center, above] {\tiny $\epsilon_{1}$};
\draw[edge] (l) -- (u) node[sloped,midway,anchor=center, above] {\tiny $\epsilon_{2}+\epsilon_3$};
\draw[edge] (n) -- (w) node[sloped,midway,anchor=center, above] {\tiny $\epsilon_{2}+\epsilon_3$};
\node (aa) at (7,0) {$x_{0,1}$};
\node (bb) at (7,-0.5) {$\vdots$};
\node (cc) at (7,-1) {$x_{m-\epsilon_1-\epsilon_2-1,1}$};
\node (dd) at (7,-1.5) {$x_{m-\epsilon_1-\epsilon_2,1}$};
\node (ee) at (7,-2) {$\vdots$};
\node (ff) at (7,-2.5) {$\vdots$};
\node (gg) at (7,-3)  {$\vdots$};
\node (hh) at (7,-3.5) {$x_{m-1,1}$};
\node (ii) at (10,0) {$0$};
\node (jj) at (10,-0.5) {$\vdots$};
\node (kk) at (10,-1) {$0$};
\node (ll) at (10,-1.5) {$x_{\epsilon_1+\epsilon_2+\epsilon_3,3}$};
\node (mm) at (10,-2) {$\vdots$};
\node (oo) at (10,-2.5) {$x_{m+\epsilon_3-1,3}$};
\node (pp) at (10,-3) {$\vdots$};
\node (qq) at (10,-3.5) {$x_{m-1,3}$};
\draw[edge] (aa) -- (ll) node[sloped,midway,anchor=center, above] {\tiny $\epsilon_{1}+\epsilon_2+\epsilon_3$};
\draw[edge] (cc) -- (oo) node[sloped,midway,anchor=center, above] {\tiny $\epsilon_{1}+\epsilon_2+\epsilon_3$};
\end{tikzpicture}
\caption{Digraphs for $\Gamma^m$ and ${\Gamma'}^m$ in Case (2)} \label{fig:case14}
\end{figure}

\item Suppose that $\epsilon_3 < 0$ and $\epsilon_3 = -\epsilon_2$.

\begin{figure}[H]
\centering
\begin{tikzpicture}
\tikzset{vertex/.style = {shape=circle,draw,minimum size=1.5em}}
\tikzset{edge/.style = {->,> = latex'}}
\node (a) at (0,0) {$x_{0,1}$};
\node (b) at (0,-0.5) {$\vdots$};
\node (c) at (0,-1) {$x_{m-\epsilon_1-\epsilon_2-1,1}$};
\node (d) at (0,-1.5) {$\vdots$};
\node (e) at (0,-2) {$x_{m-\epsilon_1-1,1}$};
\node (f) at (0,-2.5) {$x_{m-\epsilon_1,1}$};
\node (g) at (0,-3) {$\vdots$};
\node (h) at (0,-3.5) {$x_{m-1,1}$};
\node (i) at (2,0) {$0$};
\node (j) at (2,-0.5) {$\vdots$};
\node (k) at (2,-1) {$0$};
\node (l) at (2,-1.5) {$x_{\epsilon_1,2}$};
\node (m) at (2,-2) {$\vdots$};
\node (n) at (2,-2.5) {$x_{m-\epsilon_2-1,2}$};
\node (o) at (2,-3) {$\vdots$};
\node (p) at (2,-3.5) {$x_{m-1,2}$};
\node (q) at (4,0) {$0$};
\node (r) at (4,-0.5) {$\vdots$};
\node (s) at (4,-1) {$0$};
\node (t) at (4,-1.5) {$x_{\epsilon_1,3}$};
\node (u) at (4,-2)  {$\vdots$};
\node (v) at (4,-2.5) {$x_{m-\epsilon_2-1,3}$};
\node (w) at (4,-3) {$\vdots$};
\node (x) at (4,-3.5) {$x_{m-1,3}$};
\draw[edge] (a) -- (l) node[sloped,midway,anchor=center, above] {\tiny $\epsilon_{1}$};
\draw[edge] (c) -- (n) node[sloped,midway,anchor=center, above] {\tiny $\epsilon_{1}$};
\draw[edge] (e) -- (p) node[sloped,midway,anchor=center, above] {\tiny $\epsilon_{1}$};
\draw[edge] (l) -- (t) node[sloped,midway,anchor=center, above] {\tiny $0$};
\draw[edge] (n) -- (v) node[sloped,midway,anchor=center, above] {\tiny $0$};
\node (aa) at (7,0) {$x_{0,1}$};
\node (bb) at (7,-0.5) {$\vdots$};
\node (cc) at (7,-1) {$x_{m-\epsilon_1-\epsilon_2-1,1}$};
\node (dd) at (7,-1.5) {$x_{m-\epsilon_1-\epsilon_2,1}$};
\node (ee) at (7,-2) {$\vdots$};
\node (ff) at (7,-2.5) {$\vdots$};
\node (gg) at (7,-3)  {$\vdots$};
\node (hh) at (7,-3.5) {$x_{m-1,1}$};
\node (ii) at (10,0) {$0$};
\node (jj) at (10,-0.5) {$\vdots$};
\node (kk) at (10,-1) {$0$};
\node (ll) at (10,-1.5) {$x_{\epsilon_1,3}$};
\node (mm) at (10,-2) {$\vdots$};
\node (oo) at (10,-2.5) {$x_{m-\epsilon_2-1,3}$};
\node (pp) at (10,-3) {$\vdots$};
\node (qq) at (10,-3.5) {$x_{m-1,3}$};
\draw[edge] (aa) -- (ll) node[sloped,midway,anchor=center, above] {\tiny $\epsilon_{1}$};
\draw[edge] (cc) -- (oo) node[sloped,midway,anchor=center, above] {\tiny $\epsilon_{1}$};
\end{tikzpicture}
\caption{Digraphs for $\Gamma^m$ and ${\Gamma'}^m$ for Case (3)} \label{fig:case13}
\end{figure}

\item Suppose that $\epsilon_3 < 0$ and $\epsilon_3 \in \{-\epsilon_2-1, -\epsilon_2-2, \dots, -\epsilon_2-\epsilon_1\}$.

\begin{figure}[H]
\centering
\begin{tikzpicture}
\tikzset{vertex/.style = {shape=circle,draw,minimum size=1.5em}}
\tikzset{edge/.style = {->,> = latex'}}
\node (a) at (0,0) {$x_{0,1}$};
\node (b) at (0,-0.5) {$\vdots$};
\node (c) at (0,-1) {$x_{m-\epsilon_1-\epsilon_2-1,1}$};
\node (d) at (0,-1.5) {$\vdots$};
\node (e) at (0,-2) {$x_{m-\epsilon_1-1,1}$};
\node (f) at (0,-2.5) {$\vdots$};
\node (g) at (0,-3) {$\vdots$};
\node at (0,-3.5) {$\vdots$};
\node (h) at (0,-4) {$x_{m-1,1}$};
\node (i) at (2,0) {$0$};
\node (j) at (2,-0.5) {$\vdots$};
\node (k) at (2,-1) {$\vdots$};
\node (l) at (2,-1.5) {$0$};
\node (m) at (2,-2) {$x_{\epsilon_1,2}$};
\node (n) at (2,-2.5) {$\vdots$};
\node (o) at (2,-3) {$x_{m-\epsilon_2-1,2}$};
\node at (2,-3.5) {$\vdots$};
\node (p) at (2,-4) {$x_{m-1,2}$};
\node (q) at (4.5,0) {$0$};
\node (r) at (4.5,-0.5) {$\vdots$};
\node (s) at (4.5,-1) {$0$};
\node (t) at (4.5,-1.5) {$x_{\epsilon_1+\epsilon_2+\epsilon_3,3}$};
\node (u) at (4.5,-2)  {$\vdots$};
\node (v) at (4.5,-2.5) {$x_{m+\epsilon_3-1,3}$};
\node (w) at (4.5,-3) {$\vdots$};
\node (x) at (4.5,-3.5) {$\vdots$};
\node (y) at (4.5,-4) {$x_{m-1,3}$};
\draw[edge] (a) -- (m) node[sloped,midway,anchor=center, above] {\tiny $\epsilon_{1}$};
\draw[edge] (c) -- (o) node[sloped,midway,anchor=center, above] {\tiny $\epsilon_{1}$};
\draw[edge] (e) -- (p) node[sloped,midway,anchor=center, above] {\tiny $\epsilon_{1}$};
\draw[edge] (m) -- (t) node[sloped,midway,anchor=center, above] {\tiny $\epsilon_{2}+\epsilon_3$};
\draw[edge] (o) -- (v) node[sloped,midway,anchor=center, above] {\tiny $\epsilon_{2}+\epsilon_3$};
\node (aa) at (7,0) {$x_{0,1}$};
\node (bb) at (7,-0.5) {$\vdots$};
\node (cc) at (7,-1) {$x_{m-\epsilon_1-\epsilon_2-1,1}$};
\node (dd) at (7,-1.5) {$x_{m-\epsilon_1-\epsilon_2,1}$};
\node (ee) at (7,-2) {$\vdots$};
\node (ff) at (7,-2.5) {$\vdots$};
\node (gg) at (7,-3)  {$\vdots$};
\node (hh) at (7,-3.5) {$x_{m-1,1}$};
\node (ii) at (10,0) {$0$};
\node (jj) at (10,-0.5) {$\vdots$};
\node (kk) at (10,-1) {$0$};
\node (ll) at (10,-1.5) {$x_{\epsilon_1+\epsilon_2+\epsilon_3,3}$};
\node (mm) at (10,-2) {$\vdots$};
\node (oo) at (10,-2.5) {$x_{m+\epsilon_3-1,3}$};
\node (pp) at (10,-3) {$\vdots$};
\node (qq) at (10,-3.5) {$x_{m-1,3}$};
\draw[edge] (aa) -- (ll) node[sloped,midway,anchor=center, above] {\tiny $\epsilon_{1}+\epsilon_2+\epsilon_3$};
\draw[edge] (cc) -- (oo) node[sloped,midway,anchor=center, above] {\tiny $\epsilon_{1}+\epsilon_2+\epsilon_3$};
\end{tikzpicture}
\caption{Digraphs for $\Gamma^m$ and ${\Gamma'}^m$ in Case (4)} \label{fig:case15}
\end{figure}

\item Suppose that $\epsilon_3 < 0$ and $\epsilon_3 \in \{-\epsilon_2-\epsilon_1-1, -\epsilon_2-\epsilon_1-2, \dots, -m+1\}$.

\begin{figure}[H]
\centering
\begin{tikzpicture}
\tikzset{vertex/.style = {shape=circle,draw,minimum size=1.5em}}
\tikzset{edge/.style = {->,> = latex'}}
\node (a) at (0,0) {$0$};
\node (b) at (0,-0.5) {$\vdots$};
\node (c) at (0,-1) {$0$};
\node (d) at (0,-1.5) {$x_{-\epsilon_1-\epsilon_2-\epsilon_3,1}$};
\node (e) at (0,-2) {$\vdots$};
\node (f) at (0,-2.5) {$x_{m-\epsilon_1-\epsilon_2-1,1}$};
\node (g) at (0,-3) {$\vdots$};
\node (h) at (0,-3.5) {$x_{m-\epsilon_1-1,1}$};
\node  at (0,-4) {$\vdots$};
\node at (0,-4.5) {$x_{m-1,1}$};
\node (i) at (2,0) {$0$};
\node (j) at (2,-0.5) {$\vdots$};
\node (k) at (2,-1) {$\vdots$};
\node (l) at (2,-1.5) {$\vdots$};
\node at (2,-2) {$0$};
\node (m) at (2,-2.5) {$x_{-\epsilon_2-\epsilon_3,2}$};
\node (n) at (2,-3) {$\vdots$};
\node (o) at (2,-3.5) {$x_{m-\epsilon_2-1,2}$};
\node at (2,-4) {$\vdots$};
\node (p) at (2,-4.5) {$x_{m-1,2}$};
\node (q) at (4.5,0) {$x_{0,3}$};
\node (r) at (4.5,-0.5) {$\vdots$};
\node (s) at (4.5,-1) {$x_{m+\epsilon_3-1,3}$};
\node (t) at (4.5,-1.5) {$\vdots$};
\node (u) at (4.5,-2)  {$\vdots$};
\node (v) at (4.5,-2.5) {$\vdots$};
\node (w) at (4.5,-3) {$\vdots$};
\node (x) at (4.5,-3.5) {$\vdots$};
\node at (4.5,-4) {$\vdots$};
\node (y) at (4.5,-4.5) {$x_{m-1,3}$};
\draw[edge] (d) -- (m) node[sloped,midway,anchor=center, above] {\tiny $\epsilon_{1}$};
\draw[edge] (f) -- (o) node[sloped,midway,anchor=center, above] {\tiny $\epsilon_{1}$};
\draw[edge] (h) -- (p) node[sloped,midway,anchor=center, above] {\tiny $\epsilon_{1}$};
\draw[edge] (m) -- (q) node[sloped,midway,anchor=center, above] {\tiny $\epsilon_{2}+\epsilon_3$};
\draw[edge] (o) -- (s) node[sloped,midway,anchor=center, above] {\tiny $\epsilon_{2}+\epsilon_3$};
\node (aa) at (7,0) {$0$};
\node (bb) at (7,-0.5) {$\vdots$};
\node (cc) at (7,-1) {$0$};
\node (dd) at (7,-1.5) {$x_{-\epsilon_1-\epsilon_2-\epsilon_3,1}$};
\node (ee) at (7,-2) {$\vdots$};
\node (ff) at (7,-2.5) {$x_{m-\epsilon_1-\epsilon_2-1,1}$};
\node (gg) at (7,-3)  {$\vdots$};
\node (hh) at (7,-3.5) {$x_{m-1,1}$};
\node (ii) at (10,0) {$x_{0,3}$};
\node (jj) at (10,-0.5) {$\vdots$};
\node (kk) at (10,-1) {$x_{m+\epsilon_3-1,3}$};
\node (ll) at (10,-1.5) {$\vdots$};
\node (mm) at (10,-2) {$\vdots$};
\node (oo) at (10,-2.5) {$\vdots$};
\node (pp) at (10,-3) {$\vdots$};
\node (qq) at (10,-3.5) {$x_{m-1,3}$};
\draw[edge] (dd) -- (ii) node[sloped,midway,anchor=center, above] {\tiny $\epsilon_{1}+\epsilon_2+\epsilon_3$};
\draw[edge] (ff) -- (kk) node[sloped,midway,anchor=center, above] {\tiny $\epsilon_{1}+\epsilon_2+\epsilon_3$};
\end{tikzpicture}
\caption{Digraphs for $\Gamma^m$ and ${\Gamma'}^m$ in Case (5)} \label{fig:case16}
\end{figure}

\item Suppose that $\epsilon_3 < 0$ and $-\epsilon_3 \in \{-m,-m-1,\dots\}$.
\begin{figure}[H]
\centering
\begin{tikzpicture}
\tikzset{vertex/.style = {shape=circle,draw,minimum size=1.5em}}
\tikzset{edge/.style = {->,> = latex'}}
\node (a) at (0,0) {$0$};
\node (b) at (0,-0.5) {$\vdots$};
\node (c) at (0,-1) {$0$};
\node (d) at (0,-1.5) {$x_{m-\epsilon_1-\epsilon_2,1}$};
\node (f) at (0,-2) {\vdots};
\node (g) at (0,-2.5) {$x_{m-\epsilon_1-1,1}$};
\node (h) at (0,-3) {$\vdots$};
\node at (0,-3.5) {$x_{m-1,1}$};
\node (i) at (2,0) {$0$};
\node (k) at (2,-0.5) {$\vdots$};
\node (l) at (2,-1) {$\vdots$};
\node  at (2,-1.5) {$0$};
\node (m) at (2,-2) {$x_{m-\epsilon_2,2}$};
\node (o) at (2,-2.5) {$\vdots$};
\node at (2,-3) {$\vdots$};
\node (p) at (2,-3.5) {$x_{m-1,2}$};
\node (q) at (4,0) {$x_{0,3}$};
\node (s) at (4,-0.5) {$\vdots$};
\node (t) at (4,-1) {$\vdots$};
\node (u) at (4,-1.5)  {$\vdots$};
\node (v) at (4,-2) {$\vdots$};
\node (w) at (4,-2.5) {$\vdots$};
\node (x) at (4,-3) {$\vdots$};
\node at (4,-3.5) {$x_{m-1,3}$};
\draw[edge] (d) -- (m) node[sloped,midway,anchor=center, above] {\tiny $\epsilon_{1}$};
\draw[edge] (g) -- (p) node[sloped,midway,anchor=center, above] {\tiny $\epsilon_{1}$};
\node (aa) at (7,0) {$0$};
\node (bb) at (7,-0.5) {$\vdots$};
\node (cc) at (7,-1) {$0$};
\node (dd) at (7,-1.5) {$x_{m-\epsilon_1-\epsilon_2,1}$};
\node (ee) at (7,-2) {$\vdots$};
\node (ff) at (7,-2.5) {$x_{m-1,1}$};
\node (ii) at (10,0) {$x_{0,3}$};
\node (jj) at (10,-0.5) {$\vdots$};
\node (kk) at (10,-1) {$\vdots$};
\node (ll) at (10,-1.5) {$\vdots$};
\node (mm) at (10,-2) {$\vdots$};
\node (oo) at (10,-2.5) {$x_{m-1,3}$};
\end{tikzpicture}
\caption{Digraphs for $\Gamma^m$ and ${\Gamma'}^m$ in Case (6)} \label{fig:case17}
\end{figure}
\end{case}
In each one of the six cases it is clear that if we replace $\Gamma^m$ with ${\Gamma'}^m$ in $\Gamma^m_{\epsilon}$, the number of free linear digraphs and the number of circular digraphs will not change. 
\end{proof}

\begin{lemma} \label{lemma:firstredlemma2}
Let $\epsilon = (\epsilon_1, \epsilon_2, \dots, \epsilon_s)$ be a circular sequence of at least three integers. Let $\epsilon' = (\epsilon_1+\epsilon_2, \epsilon_3, \dots, \epsilon_s)$. If $\epsilon_1, \epsilon_{2} > 0$ and $\epsilon_1+\epsilon_2 \geq m$, then $c(\Gamma^m_{\epsilon}) = c(\Gamma^m_{\epsilon'})$ and $\ell(\Gamma^m_{\epsilon}) = \ell(\Gamma^m_{\epsilon'})$.
\end{lemma}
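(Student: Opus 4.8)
The plan is to argue, as in the proof of Lemma~\ref{lemma:firstredlemma1}, that replacing the subgraph $\Gamma^m\subset\Gamma^m_\epsilon$ by ${\Gamma'}^m$ changes neither the number of free linear digraphs nor the number of circular digraphs, so that the analysis is entirely local to columns $1$, $2$, $3$. The point I would exploit is that the hypothesis $\epsilon_1+\epsilon_2\geq m$ forces column $2$ to be almost inert. First I would record that, since $\epsilon_1+\epsilon_2\geq m$, the pair $(\epsilon_1+\epsilon_2,\epsilon_3)$ falls into Case~(11), Case~(6) or Case~(7) of Subsection~\ref{subsection:weighteddigraphs} according as $\epsilon_3>0$, $-m<\epsilon_3\leq 0$ or $\epsilon_3\leq -m$; in each of these the digraph has no edges. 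Hence ${\Gamma'}^m$ has no edges, its only effect being to mark the first $\tau$ vertices of column $3$ as zero, where $\tau=m$ if $\epsilon_3\geq 0$, $\tau=m+\epsilon_3$ if $-m<\epsilon_3<0$, and $\tau=0$ if $\epsilon_3\leq -m$, while no vertex of column $1$ is marked zero by ${\Gamma'}^m$.

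Next I would describe $\Gamma^m=\Gamma^m_{\epsilon_1,\epsilon_2}\cup\Gamma^m_{\epsilon_2,\epsilon_3}$. Since $\epsilon_1,\epsilon_2>0$, the pair $(\epsilon_1,\epsilon_2)$ lies in Case~(10) if $\epsilon_1<m$ and in Case~(11) if $\epsilon_1\geq m$; in either case no vertex of column $1$ is marked zero, the vertices $x_{0,2},\dots,x_{\epsilon_1-1,2}$ of column $2$ are marked zero (which is all of column $2$ when $\epsilon_1\geq m$), and each remaining vertex $x_{j,2}$ with $\epsilon_1\leq j\leq m-1$ receives a single edge $x_{j-\epsilon_1,1}\to x_{j,2}$ from column $1$. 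The key inequality is $m-\epsilon_2-1<\epsilon_1$, which holds because $\epsilon_1+\epsilon_2\geq m$. Since $\epsilon_2>0$, every edge of $\Gamma^m_{\epsilon_2,\epsilon_3}$ has its source among $x_{0,2},\dots,x_{m-\epsilon_2-1,2}$, hence among the zero vertices $x_{0,2},\dots,x_{\epsilon_1-1,2}$. Three consequences follow: the vertices $x_{\epsilon_1,2},\dots,x_{m-1,2}$ are degree-$1$ pendant leaves of $\Gamma^m_\epsilon$ attached to column $1$; no vertex of column $2$ is incident both to an edge coming from column $1$ and to an edge going to column $3$; and the target in column $3$ of every edge of $\Gamma^m_{\epsilon_2,\epsilon_3}$ is forced to be a zero vertex. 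From the second consequence, together with Proposition~\ref{proposition:circularzero} (a circular digraph of $\Gamma^m_\epsilon$ has exactly $s$ vertices, one per column, hence must meet column $2$; a circular digraph of $\Gamma^m_{\epsilon'}$ must use an edge of ${\Gamma'}^m$), I would conclude immediately that $c(\Gamma^m_\epsilon)=c(\Gamma^m_{\epsilon'})=0$.

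It remains to match $\ell$. Here I would delete column $2$ from $\Gamma^m_\epsilon$ and carry over to column $3$ the zero-markings it had inherited, and then check that the zero-set of column $3$ obtained this way is exactly $\{x_{0,3},\dots,x_{\tau-1,3}\}$, the same as in ${\Gamma'}^m$. This last verification is the one genuinely computational step: combining the zeros that $\Gamma^m_{\epsilon_2,\epsilon_3}$ places on column $3$ directly with those forced through edges emanating from the zero vertices of column $2$, one runs through the finitely many cases for $(\epsilon_2,\epsilon_3)$---Cases~(10), (5), (1), (3), (4) when $\epsilon_2<m$ and Cases~(11), (6), (7) when $\epsilon_2\geq m$---in each of which the inequality $m-\epsilon_2-1<\epsilon_1$ makes the count transparent. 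Granting it, the graph produced by the deletion is precisely $\Gamma^m_{\epsilon'}$: column $1$ is marked zero by neither graph, so the zero-status of a column-$1$ vertex is governed by the rest of the orbit and agrees for $\Gamma^m_\epsilon$ and $\Gamma^m_{\epsilon'}$, while every deleted column-$2$ vertex is either an isolated zero vertex or a pendant leaf $x_{j,2}$ attached to $x_{j-\epsilon_1,1}$ (and zero exactly when that vertex is). Removing an isolated zero vertex changes neither $\ell$ nor $c$, and removing a pendant leaf from a linear digraph leaves a linear digraph of the same free-or-not type, hence changes neither count; so $\ell(\Gamma^m_\epsilon)=\ell(\Gamma^m_{\epsilon'})$. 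The main obstacle is the column-$3$ case check just described; everything else is structural, and the argument is shorter than that of Lemma~\ref{lemma:firstredlemma1} precisely because no circular digraph can survive and column $2$ contributes only leaves and isolated zeros.
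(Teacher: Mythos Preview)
Your argument is correct and follows essentially the same approach as the paper: both reduce to a case analysis on $(\epsilon_2,\epsilon_3)$ after observing that ${\Gamma'}^m$ has no edges (since $\epsilon_1+\epsilon_2\geq m$ places $(\epsilon_1+\epsilon_2,\epsilon_3)$ in Case~(6), (7), or (11)), and then verify that the zero-pattern on column~$3$ inherited through $\Gamma^m$ coincides with the one imposed by ${\Gamma'}^m$. The paper's own proof is deliberately terse---it says only that the proof is ``entirely similar to the proof of Lemma~\ref{lemma:firstredlemma1} by considering cases,'' records the zero-sets of ${\Gamma'}^m$ in the three ranges of $\epsilon_3$, and displays a single illustrative figure before omitting the rest.

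Your write-up is more explicit in one useful respect: you isolate the inequality $m-\epsilon_2-1<\epsilon_1$ (equivalently $m-\epsilon_2\leq\epsilon_1$) as the structural reason why every edge of $\Gamma^m_{\epsilon_2,\epsilon_3}$ has its source among the already-zero vertices $x_{0,2},\dots,x_{\epsilon_1-1,2}$, so that column~$2$ splits into isolated zero vertices and degree-one pendants on column~$1$. This immediately yields $c(\Gamma^m_\epsilon)=c(\Gamma^m_{\epsilon'})=0$ via Proposition~\ref{proposition:circularzero} without drawing any pictures, and reduces the $\ell$-comparison to the column-$3$ zero-set check you describe. The paper leaves this inequality implicit in its figures. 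Both arguments also tacitly use that $\Gamma^m_{\epsilon_s,\epsilon_1}=\Gamma^m_{\epsilon_s,\epsilon_1+\epsilon_2}$ (the digraph between columns $s$ and $1$ depends on the second coordinate only through its sign, and both $\epsilon_1$ and $\epsilon_1+\epsilon_2$ are positive), so that replacing $\Gamma^m$ by ${\Gamma'}^m$ really does produce $\Gamma^m_{\epsilon'}$.
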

\begin{proof}
The proof of Lemma \ref{lemma:firstredlemma2} is entirely similar to the proof of Lemma \ref{lemma:firstredlemma1} by considering cases. As $\epsilon_1+\epsilon_2 \geq m$, the digraph ${\Gamma'}^m$ has no edges and is completely determined by specifying its zero vertices. If $\epsilon_3 \leq -m$, then ${\Gamma'}^m$ has no zero vertices. If $\epsilon_3 \geq 0$, then the zero vertices of ${\Gamma'}^m$ are precisely the vertices $x_{0,3}, \dots, x_{m-1,3}$. If $-m < \epsilon_3 < 0$, then ${\Gamma'}^m$ is as in Figure \ref{fig:case18} below. In Figure \ref{fig:case18} we present the digraph $\Gamma^m$ in only one of the possible cases and we omit it for all other cases.

\begin{figure}[H]
\centering
\begin{tikzpicture}
\tikzset{vertex/.style = {shape=circle,draw,minimum size=1.5em}}
\tikzset{edge/.style = {->,> = latex'}}
\node (a) at (0,0) {$x_{0,1}$};
\node (b) at (0,-0.5) {$\vdots$};
\node (c) at (0,-1) {$x_{m-\epsilon_1-1,1}$};
\node (d) at (0,-1.5) {$\vdots$};
\node (e) at (0,-2) {$\vdots$};
\node (f) at (0,-2.5) {$\vdots$};
\node (g) at (0,-3) {$\vdots$};
\node at (0,-3.5) {$\vdots$};
\node (h) at (0,-4) {$x_{m-1,1}$};
\node (i) at (3.5,0) {$0$};
\node (j) at (3.5,-0.5) {$\vdots$};
\node (k) at (3.5,-1) {$x_{-\epsilon_3+\epsilon_2,2}=0$};
\node (l) at (3.5,-1.5) {$\vdots$};
\node (m) at (3.5,-2) {$x_{m-\epsilon_2-1,2}=0$};
\node (n) at (3.5,-2.5) {$\vdots$};
\node (o) at (3.5,-3) {$x_{\epsilon_1,2}$};
\node at (3.5,-3.5) {$\vdots$};
\node (p) at (3.5,-4) {$x_{m-1,2}$};
\node (q) at (6,0) {$x_{0,3}=0$};
\node (r) at (6,-0.5) {$\vdots$};
\node (s) at (6,-1) {$x_{m+\epsilon_3-1,3}=0$};
\node (t) at (6,-1.5) {$x_{m+\epsilon_3,3}$};
\node (u) at (6,-2)  {$\vdots$};
\node (v) at (6,-2.5) {$\vdots$};
\node (w) at (6,-3) {$\vdots$};
\node (x) at (6,-3.5) {$\vdots$};
\node (y) at (6,-4) {$x_{m-1,3}$};
\draw[edge] (a) -- (o) node[sloped,midway,anchor=center, above] {\tiny $\epsilon_{1}$};
\draw[edge] (c) -- (p) node[sloped,midway,anchor=center, above] {\tiny $\epsilon_{1}$};
\draw[edge] (k) -- (q) node[sloped,midway,anchor=center, above] {\tiny $\epsilon_{2}+\epsilon_3$};
\draw[edge] (m) -- (s) node[sloped,midway,anchor=center, above] {\tiny $\epsilon_{2}+\epsilon_3$};
\node (aa) at (8,0) {$x_{0,1}$};
\node (bb) at (8,-0.5) {$\vdots$};
\node (cc) at (8,-1) {$\vdots$};
\node (dd) at (8,-1.5) {$\vdots$};
\node (ee) at (8,-2) {$\vdots$};
\node (ff) at (8,-2.5) {$\vdots$};
\node (gg) at (8,-3)  {$\vdots$};
\node (hh) at (8,-3.5) {$x_{m-1,1}$};
\node (ii) at (10,0) {$0$};
\node (jj) at (10,-0.5) {$\vdots$};
\node (kk) at (10,-1) {$0$};
\node (ll) at (10,-1.5) {$x_{m+\epsilon_3,3}$};
\node (mm) at (10,-2) {$\vdots$};
\node (oo) at (10,-2.5) {$\vdots$};
\node (pp) at (10,-3) {$\vdots$};
\node (qq) at (10,-3.5) {$x_{m-1,3}$}; 
\end{tikzpicture}
\caption{Digraphs for $\Gamma^m$ and ${\Gamma'}^m$ when $0 \leq \epsilon_1, \epsilon_2 < m$, $\epsilon_3 <0$ with $\epsilon_1+\epsilon_2 \geq m > -\epsilon_3 > \epsilon_2$} \label{fig:case18} \qedhere
\end{figure}
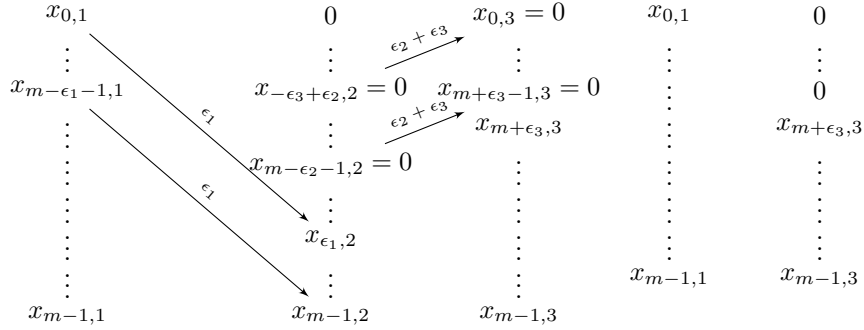
\end{proof}

\begin{proof}[Proof of Theorem \ref{firstreduction}]
Lemma \ref{lemma:firstredbasiclemma} allows us to only consider the case when both $\epsilon_1$ and $\epsilon_2$ are positive. Lemma \ref{lemma:firstredcase2} proves Theorem \ref{firstreduction} when the sequence $\epsilon$ has only two integers. Lemmas \ref{lemma:firstredlemma1} and \ref{lemma:firstredlemma2} prove Theorem \ref{firstreduction} when the sequence $\epsilon$ has more than two integers.
\end{proof}

By Theorem \ref{firstreduction}, changing every nonzero integer $\epsilon_i$ in $\epsilon$ into $|\epsilon_i|$ copies of consecutive $|\epsilon_i|/\epsilon_i \in \{1,-1\}$ will not change the number of free linear digraphs and circular digraphs. We illustrate this in the next example.

\begin{example} \label{example:motivatesecondred}
Suppose $m=5$. Let $\epsilon = (3,0,-1,-2), \epsilon' = (1,1,1,0,-1,-1,-1)$, and $\epsilon''=(3,-3)$. The digraphs $\Gamma^5_{\epsilon}, \Gamma^5_{\epsilon'}$ and $\Gamma^5_{\epsilon''}$ are as follows:

\begin{figure}[H]
\centering
\begin{tikzpicture}
\tikzset{vertex/.style = {shape=circle,draw,minimum size=1.5em}}
\tikzset{edge/.style = {->,> = latex'}}
\node (aa) at (-1,0) {$$};
\node (bb) at (-1,-0.5) {$$};
\node (cc) at (-1,-1) {$$};
\node (dd) at (-1,-1.5) {$$};
\node (ee) at (-1,-2) {$$};
\node (a) at (0,0) {$x_{0,1}$};
\node (b) at (0,-0.5) {$x_{1,1}$};
\node (c) at (0,-1) {$x_{2,1}$};
\node (d) at (0,-1.5) {$x_{3,1}$};
\node (e) at (0,-2) {$x_{4,1}$};
\node (f) at (2,0) {$0$};
\node (g) at (2,-0.5) {$0$};
\node (h) at (2,-1) {$0$};
\node (i) at (2,-1.5) {$x_{3,2}$};
\node (j) at (2,-2) {$x_{4,2}$};
\node (k) at (4,0) {$0$};
\node (l) at (4,-0.5) {$0$};
\node (m) at (4,-1) {$x_{2,3}$};
\node (n) at (4,-1.5) {$x_{3,3}$};
\node (o) at (4,-2)  {$x_{4,3}$};
\node (p) at (6,0) {$x_{0,4}$};
\node (q) at (6,-0.5) {$x_{1,4}$};
\node (r) at (6,-1) {$x_{2,4}$};
\node (s) at (6,-1.5) {$x_{3,4}$};
\node (t) at (6,-2)  {$x_{4,4}$};
\node (ff) at (7,0) {$$};
\node (gg) at (7,-0.5) {$$};
\node (hh) at (7,-1) {$$};
\node (ii) at (7,-1.5) {$$};
\node (jj) at (7,-2) {$$};
\draw[edge] (a) -- (i) node[sloped,midway,anchor=center, above] {\tiny $3$};
\draw[edge] (b) -- (j) node[sloped,midway,anchor=center, above] {\tiny $3$};
\draw[edge] (g) -- (k) node[sloped,midway,anchor=center, above] {\tiny $-1$};
\draw[edge] (h) -- (l) node[sloped,midway,anchor=center, above] {\tiny $-1$};
\draw[edge] (i) -- (m) node[sloped,midway,anchor=center, above] {\tiny $-1$};
\draw[edge] (j) -- (n) node[sloped,midway,anchor=center, above] {\tiny $-1$};
\draw[edge] (m) -- (p) node[sloped,midway,anchor=center, above] {\tiny $-2$};
\draw[edge] (n) -- (q) node[sloped,midway,anchor=center, above] {\tiny $-2$};
\draw[edge] (o) -- (r) node[sloped,midway,anchor=center, above] {\tiny $-2$};
\draw[edge] (p) -- (ff) node[sloped,midway,anchor=center, above] {\tiny $0$};
\draw[edge] (q) -- (gg) node[sloped,midway,anchor=center, above] {\tiny $0$};
\draw[edge] (r) -- (hh) node[sloped,midway,anchor=center, above] {\tiny $0$};
\draw[edge] (s) -- (ii) node[sloped,midway,anchor=center, above] {\tiny $0$};
\draw[edge] (t) -- (jj) node[sloped,midway,anchor=center, above] {\tiny $0$};
\draw[edge] (aa) -- (a) node[sloped,midway,anchor=center, above] {\tiny $0$};
\draw[edge] (bb) -- (b) node[sloped,midway,anchor=center, above] {\tiny $0$};
\draw[edge] (cc) -- (c) node[sloped,midway,anchor=center, above] {\tiny $0$};
\draw[edge] (dd) -- (d) node[sloped,midway,anchor=center, above] {\tiny $0$};
\draw[edge] (ee) -- (e) node[sloped,midway,anchor=center, above] {\tiny $0$};
\end{tikzpicture}
\caption{Digraph for $\Gamma^5_{\epsilon}$}
\end{figure}

\begin{figure}[H]
\centering
\begin{tikzpicture}
\tikzset{vertex/.style = {shape=circle,draw,minimum size=1.5em}}
\tikzset{edge/.style = {->,> = latex'}}
\node (a1) at (-1,0) {$$};
\node (a2) at (-1,-0.5) {$$};
\node (a3) at (-1,-1) {$$};
\node (a4) at (-1,-1.5) {$$};
\node (a5) at (-1,-2) {$$};
\node (b1) at (0,0) {$x_{0,1}$};
\node (b2) at (0,-0.5) {$x_{1,1}$};
\node (b3) at (0,-1) {$x_{2,1}$};
\node (b4) at (0,-1.5) {$x_{3,1}$};
\node (b5) at (0,-2) {$x_{4,1}$};
\node (c1) at (1.5,0) {$0$};
\node (c2) at (1.5,-0.5) {$x_{1,2}$};
\node (c3) at (1.5,-1) {$x_{2,2}$};
\node (c4) at (1.5,-1.5) {$x_{3,2}$};
\node (c5) at (1.5,-2) {$x_{4,2}$};
\node (d1) at (3,0) {$0$};
\node (d2) at (3,-0.5) {$0$};
\node (d3) at (3,-1) {$x_{2,3}$};
\node (d4) at (3,-1.5) {$x_{3,3}$};
\node (d5) at (3,-2)  {$x_{4,3}$};
\node (e1) at (4.5,0) {$0$};
\node (e2) at (4.5,-0.5) {$0$};
\node (e3) at (4.5,-1) {$0$};
\node (e4) at (4.5,-1.5) {$x_{3,4}$};
\node (e5) at (4.5,-2)  {$x_{4,4}$};
\node (f1) at (6,0) {$0$};
\node (f2) at (6,-0.5) {$0$};
\node (f3) at (6,-1) {$x_{2,5}$};
\node (f4) at (6,-1.5) {$x_{3,5}$};
\node (f5) at (6,-2) {$x_{4,5}$};
\node (g1) at (7.5,0) {$0$};
\node (g2) at (7.5,-0.5) {$x_{1,6}$};
\node (g3) at (7.5,-1) {$x_{2,6}$};
\node (g4) at (7.5,-1.5) {$x_{3,6}$};
\node (g5) at (7.5,-2) {$x_{4,6}$};
\node (h1) at (9,0) {$x_{0,7}$};
\node (h2) at (9,-0.5) {$x_{1,7}$};
\node (h3) at (9,-1) {$x_{2,7}$};
\node (h4) at (9,-1.5) {$x_{3,7}$};
\node (h5) at (9,-2) {$x_{4,7}$};
\node (i1) at (10,0) {$$};
\node (i2) at (10,-0.5) {$$};
\node (i3) at (10,-1) {$$};
\node (i4) at (10,-1.5) {$$};
\node (i5) at (10,-2) {$$};
\draw[edge] (a1) -- (b1) node[sloped,midway,anchor=center, above] {\tiny $0$};
\draw[edge] (a2) -- (b2) node[sloped,midway,anchor=center, above] {\tiny $0$};
\draw[edge] (a3) -- (b3) node[sloped,midway,anchor=center, above] {\tiny $0$};
\draw[edge] (a4) -- (b4) node[sloped,midway,anchor=center, above] {\tiny $0$};
\draw[edge] (a5) -- (b5) node[sloped,midway,anchor=center, above] {\tiny $0$};
\draw[edge] (b1) -- (c2) node[sloped,midway,anchor=center, above] {\tiny $1$};
\draw[edge] (b2) -- (c3) node[sloped,midway,anchor=center, above] {\tiny $1$};
\draw[edge] (b3) -- (c4) node[sloped,midway,anchor=center, above] {\tiny $1$};
\draw[edge] (b4) -- (c5) node[sloped,midway,anchor=center, above] {\tiny $1$};
\draw[edge] (c1) -- (d2) node[sloped,midway,anchor=center, above] {\tiny $1$};
\draw[edge] (c2) -- (d3) node[sloped,midway,anchor=center, above] {\tiny $1$};
\draw[edge] (c3) -- (d4) node[sloped,midway,anchor=center, above] {\tiny $1$};
\draw[edge] (c4) -- (d5) node[sloped,midway,anchor=center, above] {\tiny $1$};
\draw[edge] (d1) -- (e2) node[sloped,midway,anchor=center, above] {\tiny $1$};
\draw[edge] (d2) -- (e3) node[sloped,midway,anchor=center, above] {\tiny $1$};
\draw[edge] (d3) -- (e4) node[sloped,midway,anchor=center, above] {\tiny $1$};
\draw[edge] (d4) -- (e5) node[sloped,midway,anchor=center, above] {\tiny $1$};
\draw[edge] (e2) -- (f1) node[sloped,midway,anchor=center, above] {\tiny $1$};
\draw[edge] (e3) -- (f2) node[sloped,midway,anchor=center, above] {\tiny $1$};
\draw[edge] (e4) -- (f3) node[sloped,midway,anchor=center, above] {\tiny $1$};
\draw[edge] (e5) -- (f4) node[sloped,midway,anchor=center, above] {\tiny $1$};
\draw[edge] (f2) -- (g1) node[sloped,midway,anchor=center, above] {\tiny $1$};
\draw[edge] (f3) -- (g2) node[sloped,midway,anchor=center, above] {\tiny $1$};
\draw[edge] (f4) -- (g3) node[sloped,midway,anchor=center, above] {\tiny $1$};
\draw[edge] (f5) -- (g4) node[sloped,midway,anchor=center, above] {\tiny $1$};
\draw[edge] (g2) -- (h1) node[sloped,midway,anchor=center, above] {\tiny $1$};
\draw[edge] (g3) -- (h2) node[sloped,midway,anchor=center, above] {\tiny $1$};
\draw[edge] (g4) -- (h3) node[sloped,midway,anchor=center, above] {\tiny $1$};
\draw[edge] (g5) -- (h4) node[sloped,midway,anchor=center, above] {\tiny $1$};
\draw[edge] (h1) -- (i1) node[sloped,midway,anchor=center, above] {\tiny $0$};
\draw[edge] (h2) -- (i2) node[sloped,midway,anchor=center, above] {\tiny $0$};
\draw[edge] (h3) -- (i3) node[sloped,midway,anchor=center, above] {\tiny $0$};
\draw[edge] (h4) -- (i4) node[sloped,midway,anchor=center, above] {\tiny $0$};
\draw[edge] (h5) -- (i5) node[sloped,midway,anchor=center, above] {\tiny $0$};
\end{tikzpicture}
\caption{Digraph for $\Gamma^5_{\epsilon'}$}
\end{figure}

\begin{figure}[H]
\centering
\begin{tikzpicture}
\tikzset{vertex/.style = {shape=circle,draw,minimum size=1.5em}}
\tikzset{edge/.style = {->,> = latex'}}
\node (aa) at (-1,0) {$$};
\node (bb) at (-1,-0.5) {$$};
\node (cc) at (-1,-1) {$$};
\node (dd) at (-1,-1.5) {$$};
\node (ee) at (-1,-2) {$$};
\node (a) at (0,0) {$x_{0,1}$};
\node (b) at (0,-0.5) {$x_{1,1}$};
\node (c) at (0,-1) {$x_{2,1}$};
\node (d) at (0,-1.5) {$x_{3,1}$};
\node (e) at (0,-2) {$x_{4,1}$};
\node (f) at (2,0) {$x_{0,2}$};
\node (g) at (2,-0.5) {$x_{1,2}$};
\node (h) at (2,-1) {$x_{2,2}$};
\node (i) at (2,-1.5) {$x_{3,2}$};
\node (j) at (2,-2) {$x_{4,2}$};
\node (k) at (3,0) {$$};
\node (l) at (3,-0.5) {$$};
\node (m) at (3,-1) {$$};
\node (n) at (3,-1.5) {$$};
\node (o) at (3,-2)  {$$};
\draw[edge] (a) -- (f) node[sloped,midway,anchor=center, above] {\tiny $0$};
\draw[edge] (b) -- (g) node[sloped,midway,anchor=center, above] {\tiny $0$};
\draw[edge] (f) -- (k) node[sloped,midway,anchor=center, above] {\tiny $0$};
\draw[edge] (g) -- (l) node[sloped,midway,anchor=center, above] {\tiny $0$};
\draw[edge] (h) -- (m) node[sloped,midway,anchor=center, above] {\tiny $0$};
\draw[edge] (i) -- (n) node[sloped,midway,anchor=center, above] {\tiny $0$};
\draw[edge] (j) -- (o) node[sloped,midway,anchor=center, above] {\tiny $0$};
\draw[edge] (aa) -- (a) node[sloped,midway,anchor=center, above] {\tiny $0$};
\draw[edge] (bb) -- (b) node[sloped,midway,anchor=center, above] {\tiny $0$};
\draw[edge] (cc) -- (c) node[sloped,midway,anchor=center, above] {\tiny $0$};
\draw[edge] (dd) -- (d) node[sloped,midway,anchor=center, above] {\tiny $0$};
\draw[edge] (ee) -- (e) node[sloped,midway,anchor=center, above] {\tiny $0$};
\end{tikzpicture}
\caption{Digraph for $\Gamma^5_{\epsilon''}$}
\end{figure}
Clearly $c(\Gamma^5_{\epsilon}) = c(\Gamma^5_{\epsilon'}) = 2$ and $\ell(\Gamma^5_{\epsilon}) = \ell(\Gamma^5_{\epsilon'})=3$. This is guaranteed by Theorem \ref{firstreduction}. We also observe that $c(\Gamma^5_{\epsilon''})=2$ and $\ell(\Gamma^5_{\epsilon''}) = 3$. This suggests that we can also remove all the zeroes in $\epsilon$ and it motivates the second reduction.
\end{example}

\subsection{Second Reduction} \label{subs:secondreduction}
\begin{theorem}[Second Reduction] \label{secondreduction}
Let $\epsilon = (\epsilon_1, \epsilon_2, \dots, \epsilon_s)$ be a circular sequence such that not all $\epsilon_i$'s are zeroes. If $\epsilon'$ is the circular sequence constructed from $\epsilon$ by removing all zeroes, then we have $c(\Gamma^m_{\epsilon}) = c(\Gamma^m_{\epsilon'})$ and $\ell(\Gamma^m_{\epsilon}) = \ell(\Gamma^m_{\epsilon'})$.
\end{theorem}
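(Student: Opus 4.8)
The plan is to reduce the statement to deleting a single zero entry and then induct on the number of zeros of $\epsilon$. That is, I would first prove: if $\epsilon$ has a zero entry and $\epsilon''$ is obtained from $\epsilon$ by deleting exactly one such entry, then $c(\Gamma^m_\epsilon)=c(\Gamma^m_{\epsilon''})$ and $\ell(\Gamma^m_\epsilon)=\ell(\Gamma^m_{\epsilon''})$. Iterating this deletion produces $\epsilon'$, and the hypothesis that not all $\epsilon_i$ vanish guarantees the intermediate sequences stay nonempty. Since $c(\Gamma^m_{\bullet})$ and $\ell(\Gamma^m_{\bullet})$ are unaffected by a cyclic relabeling, I may assume the deleted zero is $\epsilon_2$; and by the remark following Case~(11) I may also assume $|\epsilon_i|\leq m$ for every $i$.

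The case $s=2$ is a degenerate base case: then $\epsilon=(\epsilon_1,0)$ with $\epsilon_1\neq 0$ and $\epsilon''=(\epsilon_1)$. By Remark~\ref{remark:digraph1}, $\Gamma^m_{(\epsilon_1)}$ consists of $m$ zero vertices with no edges, so $c=\ell=0$. For $\Gamma^m_{(\epsilon_1,0)}=\Gamma^m_{\epsilon_1,0}\cup\Gamma^m_{0,\epsilon_1}$ on the two columns $\{x_{i,1}\}$ and $\{x_{i,2}\}$, I would observe from Cases~(2), (3), (5), and (11) of Subsection~\ref{subsection:weighteddigraphs} that one of the two constituent subdigraphs marks at least one vertex of a column as zero, and then, by an argument analogous to the proof of Lemma~\ref{lemma:firstredcase2}, the edges of the two subdigraphs together force every vertex of both columns to be zero; hence $c=\ell=0$ here too.

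For $s\geq 3$, write $\epsilon=(\epsilon_1,0,\epsilon_3,\dots,\epsilon_s)$ and $\epsilon''=(\epsilon_1,\epsilon_3,\dots,\epsilon_s)$. Then $\Gamma^m_\epsilon$ and $\Gamma^m_{\epsilon''}$ share the subdigraphs $\Gamma^m_{\epsilon_3,\epsilon_4},\dots,\Gamma^m_{\epsilon_s,\epsilon_1}$ verbatim, and differ only in a local piece: $\Gamma^m_\epsilon$ contains $\Gamma^m_{\epsilon_1,0}\cup\Gamma^m_{0,\epsilon_3}$ glued along the middle column $\{x_{0,2},\dots,x_{m-1,2}\}$, whereas $\Gamma^m_{\epsilon''}$ contains $\Gamma^m_{\epsilon_1,\epsilon_3}$ on columns $1$ and $3$ only. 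The structural observation I would use is that in each of $\Gamma^m_{\epsilon_1,0}$ and $\Gamma^m_{0,\epsilon_3}$ the edges form a partial matching between consecutive columns, so each middle-column vertex $x_{j,2}$ receives at most one edge from column~$1$ and sends at most one edge to column~$3$; hence it has degree $\leq 2$ with its neighbours in columns~$1$ and~$3$, so it never lies on a cycle, and, unless $\epsilon_1\geq m$ or $\epsilon_3\leq -m$ (in which case the whole middle column is zero), a short check shows that a middle-column vertex not marked zero has degree exactly $2$, i.e.\ it is an interior subdivision point of a path from column~$1$ to column~$3$. Combining this with the additivity $\mathrm{lv}(\epsilon_1,0)+\mathrm{lv}(0,\epsilon_3)=\mathrm{lv}(\epsilon_1,\epsilon_3)$, which is immediate from \eqref{equation:changeofweight}, I would then check, in the handful of cases determined by the signs and sizes of $\epsilon_1$ and $\epsilon_3$ (reading off the explicit descriptions of Cases~(1)--(11)), that passing from $\Gamma^m_{\epsilon_1,\epsilon_3}$ to $\Gamma^m_{\epsilon_1,0}\cup\Gamma^m_{0,\epsilon_3}$ (a) leaves the set of zero-marked vertices of columns~$1$ and~$3$ unchanged, and (b) otherwise only subdivides the edges of $\Gamma^m_{\epsilon_1,\epsilon_3}$ by middle-column vertices, possibly adjoining some isolated zero middle-column vertices. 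Point~(a) makes the zero-vertex propagation through the common part $\Gamma^m_{\epsilon_3,\epsilon_4}\cup\dots\cup\Gamma^m_{\epsilon_s,\epsilon_1}$ identical for $\Gamma^m_\epsilon$ and $\Gamma^m_{\epsilon''}$, and since neither subdividing an edge nor adjoining an isolated zero vertex changes the number of free linear digraphs or the number of circular digraphs, point~(b) yields $\ell(\Gamma^m_\epsilon)=\ell(\Gamma^m_{\epsilon''})$ and $c(\Gamma^m_\epsilon)=c(\Gamma^m_{\epsilon''})$, which completes the inductive step and hence the theorem.

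I expect the main obstacle to be the bookkeeping in step~(b): one must verify, for each sign/size combination of $(\epsilon_1,\epsilon_3)$, that the zero-vertex marking on columns~$1$ and~$3$ produced by the two-step digraph $\Gamma^m_{\epsilon_1,0}\cup\Gamma^m_{0,\epsilon_3}$ coincides with that of $\Gamma^m_{\epsilon_1,\epsilon_3}$. The reduction $|\epsilon_i|\leq m$ trims this analysis but does not eliminate it. Everything else — the degree bound on middle-column vertices, the additivity of $\mathrm{lv}$, the invariance of $\ell$ and $c$ under edge subdivision and adjunction of isolated zero vertices, and the outer induction on the number of zeros — should be routine.
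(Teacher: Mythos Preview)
Your approach is correct and is essentially the paper's: both remove one zero at a time and compare the local piece $\Gamma^m_{\epsilon_1,0}\cup\Gamma^m_{0,\epsilon_3}$ against $\Gamma^m_{\epsilon_1,\epsilon_3}$, observing that the replacement only subdivides edges and matches zero markings on the outer columns. The one shortcut you miss is that the paper first invokes the First Reduction (Theorem~\ref{firstreduction}) to assume every $\epsilon_i\in\{-1,0,1\}$, which collapses your anticipated sign/size bookkeeping to just the nine triples $(\epsilon_1,0,\epsilon_3)$ with $\epsilon_1,\epsilon_3\in\{-1,0,1\}$, handled in four small pictures.
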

\begin{proof}
To prove this theorem, we can assume that $\epsilon_i \in \{\pm 1, 0\}$ by Theorem \ref{firstreduction} for all $i \in I_s$. If $\epsilon$ does not contain any zeroes, there is nothing to prove. Without loss of generality, we can assume that $\epsilon_2 = 0$. Recall that $\Gamma^m$ is the union of $\Gamma^m_{\epsilon_1, \epsilon_2}$ and $\Gamma^m_{\epsilon_2, \epsilon_3}$, and ${\Gamma'}^m = \Gamma^m_{\epsilon_1, \epsilon_3}$. Let $\epsilon'' = (\epsilon_1, \epsilon_3, \dots, \epsilon_s)$. If we replace $\Gamma^m$ by ${\Gamma'}^m$ in $\Gamma^m_{\epsilon}$, then we get $\Gamma^m_{\epsilon''}$. We compare $\Gamma^m$ and ${\Gamma'}^m$ in four cases as follows.
\medskip
\begin{case}
\item $(\epsilon_1, \epsilon_2, \epsilon_3) \in \{(-1,0,1), (0,0,0), (-1,0,0), (0,0,1)\}.$
\begin{figure}[H]
\centering
\begin{tikzpicture}
\tikzset{vertex/.style = {shape=circle,draw,minimum size=1.5em}}
\tikzset{edge/.style = {->,> = latex'}}
\node (a) at (0,0) {$x_{0,1}$};
\node (b) at (0,-0.5) {$\vdots$};
\node (c) at (0,-1) {$x_{m-1,1}$};
\node (d) at (2,0) {$x_{0,2}$};
\node (e) at (2,-0.5) {$\vdots$};
\node (f) at (2,-1) {$x_{m-1,2}$};
\node (g) at (4,0) {$x_{0,3}$};
\node (h) at (4,-0.5) {$\vdots$};
\node (i) at (4,-1) {$x_{m-1,3}$};
\draw[edge] (a) -- (d) node[sloped,midway,anchor=center, above] {\tiny $0$};
\draw[edge] (d) -- (g) node[sloped,midway,anchor=center, above] {\tiny $0$};
\draw[edge] (c) -- (f) node[sloped,midway,anchor=center, above] {\tiny $0$};
\draw[edge] (f) -- (i) node[sloped,midway,anchor=center, above] {\tiny $0$};
\node (aa) at (7,0) {$x_{0,1}$};
\node (bb) at (7,-0.5) {$\vdots$};
\node (cc) at (7,-1) {$x_{m-1,1}$};
\node (dd) at (10,0) {$x_{0,3}$};
\node (ee) at (10,-0.5) {$\vdots$};
\node (ff) at (10,-1) {$x_{m-1,3}$};
\draw[edge] (aa) -- (dd) node[sloped,midway,anchor=center, above] {\tiny $0$};
\draw[edge] (cc) -- (ff) node[sloped,midway,anchor=center, above] {\tiny $0$};
\end{tikzpicture}
\caption{Digraphs for $\Gamma^m$ and ${\Gamma'}^m$ in Case (1)} 
\end{figure}

\item $(\epsilon_1, \epsilon_2, \epsilon_3) \in \{(-1,0,-1), (0,0,-1)\}$.
\begin{figure}[H]
\centering
\begin{tikzpicture}
\tikzset{vertex/.style = {shape=circle,draw,minimum size=1.5em}}
\tikzset{edge/.style = {->,> = latex'}}
\node (a) at (0,0) {$0$};
\node (b) at (0,-0.5) {$x_{1,1}$};
\node (c) at (0,-1) {$\vdots$};
\node (d) at (0,-1.5) {$x_{m-1,1}$};
\node (e) at (2,0) {$0$};
\node (f) at (2,-0.5) {$x_{1,2}$};
\node (g) at (2,-1) {$\vdots$};
\node (h) at (2,-1.5) {$x_{m-1,2}$};
\node (i) at (4,0) {$x_{0,3}$};
\node (j) at (4,-0.5) {$\vdots$};
\node (k) at (4,-1) {$x_{m-2,3}$};
\node (l) at (4,-1.5) {$x_{m-1,3}$};
\draw[edge] (a) -- (e) node[sloped,midway,anchor=center, above] {\tiny $0$};
\draw[edge] (b) -- (f) node[sloped,midway,anchor=center, above] {\tiny $0$};
\draw[edge] (d) -- (h) node[sloped,midway,anchor=center, above] {\tiny $0$};
\draw[edge] (f) -- (i) node[sloped,midway,anchor=center, above] {\tiny $-1$};
\draw[edge] (h) -- (k) node[sloped,midway,anchor=center, above] {\tiny $-1$};
\node (aa) at (7,0) {$0$};
\node (bb) at (7,-0.5) {$x_{1,1}$};
\node (cc) at (7,-1) {$\vdots$};
\node (dd) at (7,-1.5) {$x_{m-1,1}$};
\node (ee) at (10,0) {$x_{0,3}$};
\node (ff) at (10,-0.5) {$\vdots$};
\node (gg) at (10,-1) {$x_{m-2,3}$};
\node (hh) at (10,-1.5) {$x_{m-1,3}$};
\draw[edge] (bb) -- (ee) node[sloped,midway,anchor=center, above] {\tiny $-1$};
\draw[edge] (dd) -- (gg) node[sloped,midway,anchor=center, above] {\tiny $-1$};
\end{tikzpicture}
\caption{Digraphs for $\Gamma^m$ and ${\Gamma'}^m$ in Case (2)} 
\end{figure}

\item $(\epsilon_1, \epsilon_2, \epsilon_3) \in \{(1,0,0), (1,0,1)\}$.
\begin{figure}[H]
\centering
\begin{tikzpicture}
\tikzset{vertex/.style = {shape=circle,draw,minimum size=1.5em}}
\tikzset{edge/.style = {->,> = latex'}}
\node (a) at (0,0) {$x_{0,1}$};
\node (b) at (0,-0.5) {$\vdots$};
\node (c) at (0,-1) {$x_{m-2,1}$};
\node (d) at (0,-1.5) {$x_{m-1,1}$};
\node (e) at (2,0) {$0$};
\node (f) at (2,-0.5) {$x_{1,2}$};
\node (g) at (2,-1) {$\vdots$};
\node (h) at (2,-1.5) {$x_{m-1,2}$};
\node (i) at (4,0) {$0$};
\node (j) at (4,-0.5) {$x_{1,3}$};
\node (k) at (4,-1) {$\vdots$};
\node (l) at (4,-1.5) {$x_{m-1,3}$};
\draw[edge] (a) -- (f) node[sloped,midway,anchor=center, above] {\tiny $1$};
\draw[edge] (c) -- (h) node[sloped,midway,anchor=center, above] {\tiny $1$};
\draw[edge] (e) -- (i) node[sloped,midway,anchor=center, above] {\tiny $0$};
\draw[edge] (f) -- (j) node[sloped,midway,anchor=center, above] {\tiny $0$};
\draw[edge] (h) -- (l) node[sloped,midway,anchor=center, above] {\tiny $0$};
\node (aa) at (7,0) {$x_{0,1}$};
\node (bb) at (7,-0.5) {$\vdots$};
\node (cc) at (7,-1) {$x_{m-2,1}$};
\node (dd) at (7,-1.5) {$x_{m-1,1}$};
\node (ee) at (10,0) {$0$};
\node (ff) at (10,-0.5) {$x_{1,3}$};
\node (gg) at (10,-1) {$\vdots$};
\node (hh) at (10,-1.5) {$x_{m-1,3}$};
\draw[edge] (aa) -- (ff) node[sloped,midway,anchor=center, above] {\tiny $1$};
\draw[edge] (cc) -- (hh) node[sloped,midway,anchor=center, above] {\tiny $1$};
\end{tikzpicture}
\caption{Digraphs for $\Gamma^m$ and ${\Gamma'}^m$ in Case (3)} 
\end{figure}

\item $(\epsilon_1, \epsilon_2, \epsilon_3) = (1,0,-1)$.
\begin{figure}[H]
\centering
\begin{tikzpicture}
\tikzset{vertex/.style = {shape=circle,draw,minimum size=1.5em}}
\tikzset{edge/.style = {->,> = latex'}}
\node (a) at (0,0) {$x_{0,1}$};
\node (b) at (0,-0.5) {$\vdots$};
\node (c) at (0,-1) {$x_{m-2,1}$};
\node (d) at (0,-1.5) {$x_{m-1,1}$};
\node (e) at (2,0) {$0$};
\node (f) at (2,-0.5) {$x_{1,2}$};
\node (g) at (2,-1) {$\vdots$};
\node (h) at (2,-1.5) {$x_{m-1,2}$};
\node (i) at (4,0) {$x_{0,3}$};
\node (j) at (4,-0.5) {$\vdots$};
\node (k) at (4,-1) {$x_{m-2,3}$};
\node (l) at (4,-1.5) {$x_{m-1,3}$};
\draw[edge] (a) -- (f) node[sloped,midway,anchor=center, above] {\tiny $1$};
\draw[edge] (c) -- (h) node[sloped,midway,anchor=center, above] {\tiny $1$};
\draw[edge] (f) -- (i) node[sloped,midway,anchor=center, above] {\tiny $-1$};
\draw[edge] (h) -- (k) node[sloped,midway,anchor=center, above] {\tiny $-1$};
\node (aa) at (7,0) {$x_{0,1}$};
\node (bb) at (7,-0.5) {$\vdots$};
\node (cc) at (7,-1) {$x_{m-2,1}$};
\node (dd) at (7,-1.5) {$x_{m-1,1}$};
\node (ee) at (10,0) {$x_{0,3}$};
\node (ff) at (10,-0.5) {$\vdots$};
\node (gg) at (10,-1) {$x_{m-2,3}$};
\node (hh) at (10,-1.5) {$x_{m-1,3}$};
\draw[edge] (aa) -- (ee) node[sloped,midway,anchor=center, above] {\tiny $0$};
\draw[edge] (cc) -- (gg) node[sloped,midway,anchor=center, above] {\tiny $0$};
\end{tikzpicture}
\caption{Digraphs for $\Gamma^m$ and ${\Gamma'}^m$ in Case (4)} 
\end{figure}
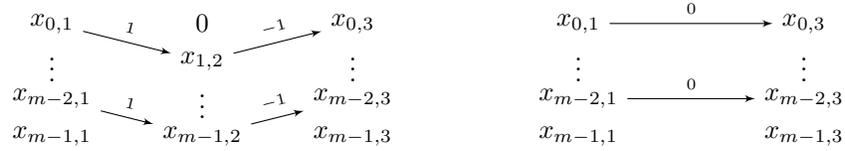
\end{case}
In each of the four cases above, it is easy to see that if we replace $\Gamma^m$ with ${\Gamma'}^m$ in $\Gamma^m_{\epsilon}$, the number of free linear digraphs and the number of circular digraphs do not change. By applying this process repeatedly, we can remove all the zeroes in the $\epsilon$ sequence to get $\epsilon'$ such that $c(\Gamma^m_{\epsilon}) = c(\Gamma^m_{\epsilon'})$ and $\ell(\Gamma^m_{\epsilon}) = \ell(\Gamma^m_{\epsilon'})$.
\end{proof}

\section{Dieudonn\'e Modules and the Main Result} \label{section:dm}

Let $\epsilon = (\epsilon_1, \dots, \epsilon_s)$ be a circular sequence of integers of length $s$. For every positive integer $t > s$, let $t' \in I_s $ be such that $t \equiv t'$ modulo $s$; set $\epsilon_{t} = \epsilon_{t'}$. In this section, we develop a more direct algorithm to compute $\ell(\Gamma^m_{\epsilon})$ and $c(\Gamma^m_{\epsilon})$ from $\epsilon$ without having to consider the digraph $\Gamma^m_{\epsilon}$. If $\epsilon_1 = \cdots = \epsilon_s = 0$, then $\ell(\Gamma_{\epsilon}^m) = 0$ and $c(\Gamma_{\epsilon}^m)=m$. Suppose now that not all $e_i$'s are zeroes. 

\subsection{Calculation of $\ell(\Gamma^m_{\epsilon})$}
By Theorems \ref{firstreduction} and \ref{secondreduction}, we assume in this subsection that in fact we have $\epsilon_i \in \{1,-1\}$ for all $i \in I_s$. We denote by 
\[x_{l_1,t_1}, x_{l_2,t_2}, \cdots, x_{l_n,t_n}\] 
a free linear digraph of $\Gamma^m_{\epsilon}$, where $x_{l_1,t_1}$ is the origin and $x_{l_n,t_n}$ is the terminal. 

\begin{lemma} \label{lemma:pdivbasic1}
We have $\epsilon_{t_1} = -1$, $\epsilon_{t_n}=1$, and $l_1=l_n = m-1$.
\end{lemma}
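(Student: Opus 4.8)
The plan is to read the endpoints straight off the explicit digraph descriptions of Cases (1)--(11) in Subsection \ref{subsection:weighteddigraphs}. First I would record the basic structural fact that makes this possible: in $\Gamma^m_\epsilon$ each vertex $x_{i,t}$ occurs in exactly two of the constituent pieces, namely as a vertex of the \emph{target} column of $\Gamma^m_{\epsilon_{t-1},\epsilon_t}$ and as a vertex of the \emph{source} column of $\Gamma^m_{\epsilon_t,\epsilon_{t+1}}$ (with cyclic indexing). Since in every one of the eleven pictures all edges run from the source column to the target column, $x_{i,t}$ is the target of at most one edge of $\Gamma^m_\epsilon$ and the source of at most one edge of $\Gamma^m_\epsilon$. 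Hence every connected component of $\Gamma^m_\epsilon$ is a directed path or a directed cycle, and a free linear digraph is exactly a maximal directed path all of whose vertices are nonzero; in particular its origin $x_{l_1,t_1}$ is a nonzero vertex of in-degree $0$ and its terminal $x_{l_n,t_n}$ is a nonzero vertex of out-degree $0$. By the reductions of Theorems \ref{firstreduction} and \ref{secondreduction}, already invoked at the start of this subsection, I may assume $\epsilon_i\in\{1,-1\}$ for all $i$.

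Next I would pin down the origin. Every edge entering column $t_1$ comes from the single piece $\Gamma^m_{\epsilon_{t_1-1},\epsilon_{t_1}}$, so I run through the (at most) four possibilities for $(\epsilon_{t_1-1},\epsilon_{t_1})\in\{1,-1\}^2$, which for $m\ge 2$ fall under Cases (2), (10), (1), (8) according to whether this pair is $(-1,1),(1,1),(1,-1),(-1,-1)$ respectively, and for $m=1$ under the degenerate Cases (2), (11), (7), (9). Reading off the figures: if $(\epsilon_{t_1-1},\epsilon_{t_1})=(-1,1)$ then every vertex of column $t_1$ has an incoming edge (Case (2)); if $(\epsilon_{t_1-1},\epsilon_{t_1})=(1,1)$ then the only vertex of column $t_1$ with no incoming edge is $x_{0,t_1}$, and it has been marked zero; and if $\epsilon_{t_1}=-1$ (the two remaining pairs) then the only vertex of column $t_1$ with no incoming edge is $x_{m-1,t_1}$, and this piece does not mark it zero. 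Since $x_{l_1,t_1}$ is a nonzero vertex of in-degree $0$, the first two possibilities are ruled out, which forces $\epsilon_{t_1}=-1$ and $l_1=m-1$.

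Finally, the assertions about the terminal follow from those about the origin by the edge-reversal symmetry of Remark \ref{remark:negativereverse}: reversing all edges and negating all weights carries $\Gamma^m_\epsilon$ to $\Gamma^m_\eta$ with $\eta_t=-\epsilon_t$ and turns the terminal of a free linear digraph of $\Gamma^m_\epsilon$ into the origin of a free linear digraph of $\Gamma^m_\eta$. Applying the previous paragraph to $\eta$ gives $l_n=m-1$ and $\eta_{t_n}=-1$, that is, $\epsilon_{t_n}=1$. I expect the only genuine work to be the bookkeeping in the middle paragraph---correctly identifying, from the figures for Cases (1), (2), (8), (10) (and (7), (9), (11) when $m=1$), which vertex of column $t_1$ is missing an incoming edge and whether it has been declared zero---together with dispatching the small boundary cases: $s=1$, where $\Gamma^m_\epsilon$ is governed by Remark \ref{remark:digraph1} and consists only of zero vertices so that the statement is vacuous; $s=2$, where columns $t_1-1$ and $t_1+1$ coincide but the local analysis is unchanged; and $m=1$, where several of the sets $S_{i,m}$ collapse and the relevant pieces are the edge-free Cases (7), (9), (11), with the same conclusion.
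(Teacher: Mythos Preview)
Your proposal is correct and follows essentially the same approach as the paper: both read off the constraints on the origin directly from the case-by-case description of the pieces $\Gamma^m_{\epsilon_{t_1-1},\epsilon_{t_1}}$, concluding that a nonzero vertex of in-degree $0$ in column $t_1$ can only exist when $\epsilon_{t_1}=-1$, and then only at row $m-1$. The paper's proof is terser, simply citing Cases~(1) and~(8) for the origin and Cases~(1) and~(10) for the terminal; your write-up spells out the four subcases for $(\epsilon_{t_1-1},\epsilon_{t_1})\in\{1,-1\}^2$ more explicitly and, in fact, is more careful than the paper in noting that for $m=1$ the relevant pairs land in the edge-free Cases~(7), (9), (11) rather than (1), (8), (10).

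The one genuine difference is how you treat the terminal: the paper repeats the same local case analysis on the outgoing side, while you invoke the edge-reversal symmetry of Remark~\ref{remark:negativereverse} to reduce the terminal statement to the origin statement for the sequence $\eta=(-\epsilon_s,\dots,-\epsilon_1)$. This is a legitimate shortcut and buys you a cleaner argument with half the casework; the paper's direct approach has the minor advantage of being self-contained and not requiring one to check that the symmetry of Remark~\ref{remark:negativereverse} also preserves which vertices are marked zero (it does, but the remark only states the assertion about edges and weights). Either route is short, and the substantive content is the same.
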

\begin{proof}
The only possible way for $x_{l_1,t_1}, x_{l_2,t_2}, \cdots, x_{l_n,t_n}$ to have a nonzero origin in $\Gamma^m_{\epsilon}$ is that $l_1 = m-1$, and $(\epsilon_{t_1-1}, \epsilon_{t_1}) \in \{(-1,-1), (1,-1)\}$, where $\epsilon_{t_1-1}$ is the integer before $\epsilon_{t_1}$ in $\epsilon$ (see Cases (1) and (8) of Subsection \ref{subsection:weighteddigraphs}). For the same reason, the only possible way for $x_{l_1,t_1}, x_{l_2,t_2}, \cdots, x_{l_n,t_n}$  to have a nonzero terminal in $\Gamma^m_{\epsilon}$ is that $l_n = m-1$, and $(\epsilon_{t_{n}}, \epsilon_{t_n+1}) \in \{(1,-1), (1,1)\}$, where $\epsilon_{t_n+1}$ is the integer after $\epsilon_{t_n}$ in $\epsilon$ (see Cases (1) and (10) of Subsection \ref{subsection:weighteddigraphs}). The lemma follows from the last two sentences.
\end{proof}

\begin{corollary} \label{corollary:freelinearsinglevertex}
Let $\epsilon=(\epsilon_1, \dots, \epsilon_s)$ be a circular sequence such that $\epsilon_i \in \{1, -1\}$ for all $i \in I_s$. No free linear digraph of $\Gamma^m_{\epsilon}$ can have just one single vertex. 
\end{corollary}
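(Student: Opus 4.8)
The plan is to read this off directly from Lemma \ref{lemma:pdivbasic1}, arguing by contradiction. Suppose some free linear digraph of $\Gamma^m_\epsilon$ consists of a single vertex $x_{l,t}$. First I would observe that, by the definition of a linear graph, this lone vertex is simultaneously the origin and the terminal of the free linear digraph; so in the notation of Lemma \ref{lemma:pdivbasic1} one has $n=1$ and $x_{l_1,t_1}=x_{l_n,t_n}=x_{l,t}$, and in particular $t_1=t_n=t$.

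Then I would invoke Lemma \ref{lemma:pdivbasic1}: its conclusion $\epsilon_{t_1}=-1$ forces $\epsilon_t=-1$, while its conclusion $\epsilon_{t_n}=1$ forces $\epsilon_t=1$. Since $\epsilon_t$ cannot be both $-1$ and $1$, no such single-vertex free linear digraph can exist, which is the assertion of the corollary. I would also dispose of the degenerate case $s=1$ separately, where the statement is vacuous: when $\epsilon_1\in\{1,-1\}$ every vertex of $\Gamma^m_\epsilon$ is a zero vertex by Remark \ref{remark:digraph1}, so $\Gamma^m_\epsilon$ contains no free linear digraph at all; for $s\ge 2$ the argument above applies verbatim, including the case $m=1$ (where $l_1=l_n=m-1=0$).

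There is no genuine obstacle here: the corollary is an immediate consequence of the fact that a one-vertex linear digraph is its own origin and terminal, which makes the two mutually exclusive possibilities $\epsilon_t=\pm 1$ produced by Lemma \ref{lemma:pdivbasic1} collide. The only point worth spelling out carefully in the write-up is precisely this identification of the origin with the terminal in the one-vertex case.
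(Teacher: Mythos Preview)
Your proposal is correct and follows essentially the same approach as the paper, which simply states that the corollary is an easy consequence of Lemma~\ref{lemma:pdivbasic1}. You have merely spelled out the contradiction $\epsilon_t=-1=\epsilon_t=1$ and added the harmless $s=1$ remark, neither of which departs from the paper's intended argument.
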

\begin{proof}
This is an easy consequence of Lemma \ref{lemma:pdivbasic1}.
\end{proof}

\begin{lemma} \label{lemma:pdivbasic2}
For each $i \in I_n$, we have the following formula for the first subscript of $x_{l_i,t_i}$:
\begin{equation*}
l_i = 
\begin{cases}
l_1+\sum_{j=1}^i \epsilon_{t_j}+1 = m-1+\sum_{j=1}^i \epsilon_{t_j}+1 & \text{if } \epsilon_{t_i} = -1,\\
l_1+\sum_{j=1}^i \epsilon_{t_j}+1 = m-1+\sum_{j=1}^i \epsilon_{t_j} & \text{if } \epsilon_{t_i}=1.
\end{cases}
\end{equation*}
Specifically, we have $\sum_{j=1}^n \epsilon_{t_j} = 0$. For every $i \in I_{n-1}$, we have $-m \leq \sum_{j=1}^i \epsilon_{t_j} < 0$.
\end{lemma}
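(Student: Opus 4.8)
The plan is to obtain the closed-form expression for $l_i$ by telescoping the one-step weight relations along the free linear digraph $x_{l_1,t_1}, x_{l_2,t_2}, \ldots, x_{l_n,t_n}$, and then to extract the three asserted facts from the single elementary observation that every $l_i$ is a legitimate vertex index (so $0 \le l_i \le m-1$) together with Lemma \ref{lemma:pdivbasic1}, which pins down the two endpoints.

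First I would set up and solve the recursion. Consecutive vertices $x_{l_i,t_i}$ and $x_{l_{i+1},t_{i+1}}$ are joined by an edge of $\Gamma^m_{\epsilon}$, so $t_{i+1}=t_i+1$ and, by \eqref{equation:changeofweight}, the weight of that edge equals both $\mathrm{lv}(\epsilon_{t_i},\epsilon_{t_{i+1}})=\max(\epsilon_{t_i},0)+\min(\epsilon_{t_{i+1}},0)$ and $l_{i+1}-l_i$. Telescoping the identity $l_{j+1}-l_j=\max(\epsilon_{t_j},0)+\min(\epsilon_{t_{j+1}},0)$ over $j=1,\dots,i-1$, re-indexing the $\min$-sum, and using $\max(x,0)+\min(x,0)=x$ gives
\[
l_i-l_1=\sum_{j=1}^{i}\epsilon_{t_j}-\max(\epsilon_{t_i},0)-\min(\epsilon_{t_1},0).
\]
By Lemma \ref{lemma:pdivbasic1} we have $\epsilon_{t_1}=-1$ and $l_1=m-1$, so $\min(\epsilon_{t_1},0)=-1$; since $\max(\epsilon_{t_i},0)$ equals $0$ for $\epsilon_{t_i}=-1$ and $1$ for $\epsilon_{t_i}=1$, this is precisely the displayed two-case formula.

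Next I would read off the consequences. Evaluating the formula at $i=n$, where $\epsilon_{t_n}=1$ and $l_n=m-1=l_1$ by Lemma \ref{lemma:pdivbasic1}, gives $m-1=(m-1)+\sum_{j=1}^n\epsilon_{t_j}$, hence $\sum_{j=1}^n\epsilon_{t_j}=0$. For $i\in I_{n-1}$, substituting $0\le l_i\le m-1$ into the formula yields $-m\le\sum_{j=1}^i\epsilon_{t_j}\le-1$ when $\epsilon_{t_i}=-1$, which already gives both $\sum_{j=1}^i\epsilon_{t_j}\ge -m$ and $\sum_{j=1}^i\epsilon_{t_j}<0$; and it yields $-(m-1)\le\sum_{j=1}^i\epsilon_{t_j}\le 0$ when $\epsilon_{t_i}=1$, where the lower bound is more than enough but the strict inequality $\sum_{j=1}^i\epsilon_{t_j}<0$ — equivalently $l_i\ne m-1$ — still has to be established. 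To do that for $\epsilon_{t_i}=1$ and $i\le n-1$, I would use that such an $x_{l_i,t_i}$ is not the terminal of the digraph and hence has an outgoing edge, while a short inspection of the eleven-case construction in Subsection \ref{subsection:weighteddigraphs} shows that whenever $\epsilon_t=1$ the vertex $x_{m-1,t}$ has no outgoing edge (it is governed by Cases (1) and (10) when $m\ge2$, and by the edge-free Cases (7) and (11) when $m=1$); hence $l_i\ne m-1$, so $\sum_{j=1}^i\epsilon_{t_j}\le-1<0$.

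I expect the telescoping and the boundary bookkeeping to be entirely mechanical. The only step that does not follow from the closed-form formula alone, and hence the main (if mild) obstacle, is ruling out $l_i=m-1$ for an interior vertex with $\epsilon_{t_i}=1$; this forces one back to the explicit list of cases to check the absence of an outgoing edge at $x_{m-1,t}$ when $\epsilon_t=1$.
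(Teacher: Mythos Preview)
Your proof is correct and follows essentially the same approach as the paper: both compute $l_i-l_1$ by summing the edge weights along the free linear digraph and then read off the three consequences from $0\le l_i\le m-1$ together with Lemma~\ref{lemma:pdivbasic1}. The only difference is cosmetic: the paper tabulates the four possibilities for $(\epsilon_{t_i},\epsilon_{t_{i+1}})$ and then counts sign changes to collapse the sum, whereas you telescope the closed-form weight identity $l_{j+1}-l_j=\max(\epsilon_{t_j},0)+\min(\epsilon_{t_{j+1}},0)$ from \eqref{equation:changeofweight} directly, which is a bit cleaner and avoids the parity bookkeeping. Your handling of the strict inequality $\sum_{j=1}^i\epsilon_{t_j}<0$ for interior $i$ with $\epsilon_{t_i}=1$ is the same as the paper's: the vertex $x_{m-1,t}$ with $\epsilon_t=1$ has no outgoing edge, so it would have to be the terminal.
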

\begin{proof}
For each $i \in I_n$,
\begin{enumerate}
\item if $(\epsilon_{t_i}, \epsilon_{t_{i+1}}) \in \{(-1,-1), (1,1)\}$, then $l_{i+1} = l_{i}+\epsilon_{t_{i+1}}$;
\item if $(\epsilon_{t_i}, \epsilon_{t_{i+1}}) \in \{(-1,1), (1,-1)\}$, then $l_{i+1} = l_{i}$.
\end{enumerate}
In other words, if the sign remains the same from $\epsilon_{t_i}$ to $\epsilon_{t_{i+1}}$, then $l_{i+1}-l_i = \epsilon_{t_{i+1}}$; if the sign changes from $\epsilon_{t_i}$ to $\epsilon_{t_{i+1}}$, then $l_{i+1} - l_{i} = 0$. 

If $\epsilon_{t_i}=-1$, then the sign changes an even number of times from $\epsilon_{t_1}$ to $\epsilon_{t_i}$, where half of the changes are from $-1$ and $1$ and the other half of the changes are from $1$ to $-1$. As a result, 
\[l_i - l_1 = \sum_{j=2}^i \epsilon_{t_j} = \sum_{j=1}^i \epsilon_{t_j}+1.\]

If $\epsilon_{t_i}=1$, then the sign changes an odd number of times from $\epsilon_{t_1}$ to $\epsilon_{t_i}$, where the number of changes from $-1$ to $1$ is one more than the number of changes from $1$ to $-1$. As a result, 
\[l_i - l_1 = \sum_{j=2}^i \epsilon_{t_j}-1 = \sum_{j=1}^i \epsilon_j.\]
As $l_1 = m-1$ by Lemma \ref{lemma:pdivbasic1}, this proves the formula for $l_i$ for all $1 \leq i \leq n$. As $\epsilon_{t_n}=1$ and $l_n=m-1$, we get that $\sum_{j=1}^n \epsilon_{t_j} = 0$.

For every $i \in I_n$, we have $l_i \in \{0, \dots, m-1\}$. Hence $-m \leq \sum_{j=1}^i \epsilon_{t_j} \leq 0$ and moreover, if $\sum_{j=1}^i \epsilon_{t_j} = 0$, then $\epsilon_{t_i}=1$. If there exists $1 < i < n$ such that $\epsilon_{t_i} = 1$ and $\sum_{j=1}^i \epsilon_{t_j} = 0$, then $l_i = m-1$. As $\epsilon_{t_i} = 1$ and $l_i = m-1$, we get that $x_{l_i,t_i} = x_{m-1,t_i}$ is the terminal $x_{l_n,t_n} = x_{m-1,t_n}$ of the free linear digraph and thus $i=n$, a contradiction. Therefore $\sum_{j=1}^i \epsilon_{t_j} < 0$ for all $i \in I_{n-1}$.
\end{proof}

\begin{corollary}
The length $n$ of every free linear digraph of $\Gamma_{\epsilon}^m$ is no longer than the length of $\epsilon$, namely, $n \leq s$.
\end{corollary}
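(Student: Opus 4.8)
The plan is to argue by contradiction, using only the partial-sum bookkeeping already established in Lemmas \ref{lemma:pdivbasic1} and \ref{lemma:pdivbasic2}. As throughout the subsection I may assume $\epsilon_i \in \{1,-1\}$ for all $i \in I_s$, and I may further assume $s \ge 2$, since when $s = 1$ the digraph $\Gamma^m_\epsilon$ has no free linear digraph at all (Remark \ref{remark:digraph1}) and the bound is then vacuous.

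Fix a free linear digraph $x_{l_1,t_1}, x_{l_2,t_2}, \dots, x_{l_n,t_n}$ of $\Gamma^m_\epsilon$, with origin $x_{l_1,t_1}$ and terminal $x_{l_n,t_n}$, and suppose for contradiction that $n \ge s+1$. The first step is to record that consecutive vertices occupy consecutive columns: every edge of $\Gamma^m_\epsilon$ runs from a column-$t$ vertex to a column-$(t+1)$ vertex, so $t_{i+1} \equiv t_i + 1 \pmod s$, and hence $t_i \equiv t_1 + (i-1) \pmod s$ for all $i \in I_n$. Next, put $P_i := \sum_{j=1}^i \epsilon_{t_j}$ with $P_0 := 0$; by Lemma \ref{lemma:pdivbasic2} we have $P_n = 0$ and $-m \le P_i < 0$ for every $i \in \{1, \dots, n-1\}$. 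The key point is that advancing $s$ steps changes $P$ by a fixed amount: because $t_{i+1}, \dots, t_{i+s}$ run through all of $I_s$ and $\epsilon$ is circular,
\[
P_{i+s} - P_i \;=\; \sum_{j=i+1}^{i+s} \epsilon_{t_j} \;=\; \sum_{\ell=1}^{s} \epsilon_\ell \;=:\; \Sigma
\]
for every $i \ge 0$; in particular $P_s = \Sigma$.

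The contradiction then drops out. Since $n \ge s+1$ and $s \ge 1$, both $s$ and $n-s$ lie in $\{1, \dots, n-1\}$, so $P_s < 0$ and $P_{n-s} < 0$. The first inequality gives $\Sigma = P_s < 0$; on the other hand, $P_n = P_{n-s} + \Sigma$ together with $P_n = 0$ gives $\Sigma = -P_{n-s} > 0$, which is absurd. Hence $n \le s$.

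I expect no genuine obstacle, as the substance is already packaged in Lemmas \ref{lemma:pdivbasic1} and \ref{lemma:pdivbasic2}. The only points requiring a little care are the two index checks — that $s$ and $n-s$ both genuinely fall strictly inside $\{1,\dots,n-1\}$, which is exactly what the assumption $n \ge s+1$ together with $s \ge 1$ provides — and the observation that the length-$s$ block sum $\sum_{j=i+1}^{i+s}\epsilon_{t_j}$ collapses to the period sum $\sum_{\ell=1}^s \epsilon_\ell$ because the column indices increase by one at each step.
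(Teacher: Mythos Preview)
Your argument is correct and follows essentially the same approach as the paper's: both assume $n>s$, invoke the strict negativity of the interior partial sums from Lemma~\ref{lemma:pdivbasic2}, and exploit that a full period of $s$ steps contributes the constant $\Sigma=\sum_{\ell=1}^s\epsilon_\ell$ to produce conflicting signs. Your version is in fact a little cleaner, since by looking at $P_s$ and $P_{n-s}$ directly you avoid the division-with-remainder bookkeeping $n=qs+r$ that the paper carries out.
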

\begin{proof}
We show that the assumption $n>s$ leads to a contradiction. Without loss of generality, we can assume that $t_1=1$ due to the circular nature of $\epsilon$. Let $n = qs+r$, where $q, r$ are integers such that $q \geq 1$ and  $0 \leq r < s$. By Lemma \ref{lemma:pdivbasic2}, we know that $\sum_{j=1}^s \epsilon_j =: \epsilon_0 < 0$. As 
\[0 = \sum_{j=1}^n \epsilon_{t_j} = q\epsilon_0+\sum_{j=n-r+1}^n\epsilon_{t_j},\]
we get that $r > 0$ and \[\sum_{j=n-r+1}^n \epsilon_j = -q\epsilon_0>0.\] On the other hand, we have 
\[\sum_{j=n-r+1}^n \epsilon_{t_j} = \sum_{j=qs+1}^{qs+r} \epsilon_{t_j} = \sum_{j=1}^r \epsilon_{t_j} < 0,\] a contradiction. Hence $n \leq s$.
\end{proof}

\begin{definition} \label{definition:freelinearsegment}
Let $\epsilon = (\epsilon_1, \epsilon_2, \dots, \epsilon_s)$ be a circular sequence of integers such that $\epsilon_i \in \{1, -1\}$ for all $i \in I_s$. A segment $\epsilon_a, \epsilon_{a+1}, \dots, \epsilon_b$ of $\epsilon$ is called a free linear segment of level $\lambda \geq 1$ if it satisfies the following four conditions:
\begin{enumerate}
\item we have $\epsilon_a=-1$ and $\epsilon_b=1$;
\item we have $\sum_{j=a}^b \epsilon_j = 0$;
\item for all $i \in \{a, a+1, \dots, b-1\}$, we have $-\lambda \leq \sum_{j=a}^i \epsilon_j < 0$;
\item there exists $i_1 \in \{a, a+1, \dots, b-1\}$ such that we have $\sum_{j=a}^{i_1} \epsilon_j = -\lambda$.
\end{enumerate}
We denote by $a_{\lambda}(\epsilon)$ the number of free linear segments of level $\lambda$ in $\epsilon$.
\end{definition}

\begin{remark} \label{remark:0dontmatterina}
Definition \ref{definition:freelinearsegment} still makes sense even if we allow $\epsilon$ to contain $0$'s as $0$'s do not change sums of the form $\sum_{j=a}^i \epsilon_j$ (with $a \leq i \leq b$) and thus they do not change the level of a free linear segment. Also adding $0$'s does not change the below results in this subsection due to Theorem \ref{secondreduction}.
\end{remark}

\begin{proposition} \label{proposition:freelinearsegmentcontains}
Each free linear segment of level $\lambda \geq 2$ contains at least one free linear segment of level $\lambda-1$.
\end{proposition}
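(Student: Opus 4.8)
The plan is to argue via the partial sums of the segment. Write the given free linear segment as $\epsilon_a, \epsilon_{a+1}, \dots, \epsilon_b$ and set $S_i := \sum_{j=a}^{i}\epsilon_j$ for $a-1 \le i \le b$, so that $S_{a-1}=0$. By Theorems \ref{firstreduction} and \ref{secondreduction} (see also Remark \ref{remark:0dontmatterina}) we may assume $\epsilon_j \in \{1,-1\}$, so that $S_{a-1}, S_a, \dots, S_b$ is a $\pm 1$ walk. Condition (1) of Definition \ref{definition:freelinearsegment} gives $S_a = -1$ and $S_{b-1} = -1$, condition (2) gives $S_b = 0$, and conditions (3) and (4) say that $-\lambda \le S_i \le -1$ for all $i \in \{a,\dots,b-1\}$, with equality $S_{i_1} = -\lambda$ for at least one such index $i_1$. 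The key observation is that the visit to depth $-\lambda$ takes place during a single ``excursion below the level $-1$'' of this walk, and that, after translating the walk by $+1$, such an excursion of depth $\lambda$ is exactly a free linear segment of level $\lambda-1$; this of course needs $\lambda - 1 \ge 1$, which is our hypothesis $\lambda \ge 2$.

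Concretely, I would fix an index $i_1 \in \{a,\dots,b-1\}$ with $S_{i_1} = -\lambda$, let $p$ be the largest index below $i_1$ with $S_p = -1$, and let $q$ be the smallest index above $i_1$ with $S_q = -1$. Both exist and satisfy $a \le p < i_1 < q \le b-1$, because $S_a = S_{b-1} = -1$ while $S_{i_1} = -\lambda \le -2$ (this is where $\lambda \ge 2$ enters, to separate $i_1$ from the endpoints). Setting $a' := p+1$ and $b' := q$, maximality of $p$ and minimality of $q$, together with the bound $S_i \le -1$ on $\{a,\dots,b-1\}$, force $S_i \le -2$ for every $p < i < q$; in particular $\epsilon_{a'} = S_{a'} - S_p = -1$ and $\epsilon_{b'} = S_q - S_{b'-1} = 1$. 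It then remains to read off the four conditions of Definition \ref{definition:freelinearsegment} for $\epsilon_{a'}, \dots, \epsilon_{b'}$ at level $\lambda - 1$: condition (1) is the sign computation just made; condition (2) is $\sum_{j=a'}^{b'}\epsilon_j = S_q - S_p = 0$; and since $\sum_{j=a'}^{i}\epsilon_j = S_i - S_p = S_i + 1$, the inequalities $-\lambda \le S_i \le -2$ on $\{a',\dots,b'-1\}$ become $-(\lambda-1) \le \sum_{j=a'}^{i}\epsilon_j \le -1$, which is condition (3), while the value at $i = i_1$ equals $S_{i_1}+1 = -(\lambda-1)$, which gives condition (4).

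I do not expect a genuine obstacle here. The only point that needs care is the index bookkeeping guaranteeing that $p$ and $q$ exist and lie strictly inside $\{a,\dots,b-1\}$, together with the elementary fact that a $\pm 1$ walk which leaves the level $-1$ cannot return to it without remaining at most $-2$ in between, which is exactly what makes $[a',b']$ an honest sub-excursion below $-1$. Once that is set up the translation by $+1$ is purely formal, and the same partial-sum argument goes through verbatim for sequences containing zeros (as in Remark \ref{remark:0dontmatterina}), since zero entries leave all the sums $S_i$ unchanged.
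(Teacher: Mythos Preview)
Your proposal is correct and follows essentially the same approach as the paper's proof. Both arguments locate the last index $p$ before a depth-$\lambda$ visit with partial sum $-1$ and the first index $q$ after it with partial sum $-1$, and take the sub-segment in between; your $a'=p+1$ and $b'=q$ coincide with the paper's $i_0$ and $i_2$, and your partial-sum/walk formulation makes the verification of the four conditions (in particular the lower bound $-(\lambda-1)$) slightly more transparent than the paper's direct-sum bookkeeping.
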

\begin{proof}
Let $\epsilon_a, \epsilon_{a+1}, \dots, \epsilon_b$ be a free linear segment of level $\lambda \geq 2$ with $1 \leq a < b \leq s$. Let $a < i_1 < b$ be the smallest integer such that $\sum_{j=a}^{i_1} \epsilon_j = -\lambda$ and $\epsilon_{i_1} = -1$. As $\lambda \geq 2$, we know that $\epsilon_{a+1}=-1$ and $\epsilon_{b-1} = 1$. Therefore $\sum_{j=a+1}^{i_1} \epsilon_j= -(\lambda-1)$. We can then find $a+1 \leq i_0 \leq i_1$ such that $\epsilon_{i_0} = -1$, $\sum_{j=i_0}^{i_1} \epsilon_j = -(\lambda-1)$ and $\sum_{j=i_0}^i \epsilon_j < 0$ for all $i_0 \leq i < i_1$. As 
\[\sum_{j=i_0}^{b-1} \epsilon_j = \sum_{j=i_0}^{i_1} \epsilon_j + (\sum_{j=a+1}^{b-1} \epsilon_j - \sum_{j=a+1}^{i_1}\epsilon_j)= -(\lambda-1)+(0-(-(\lambda-1)))=0,\]
 we can find $i_1 < i_2 \leq b-1$ such that $\epsilon_{i_2} = 1$, $\sum_{j=i_0}^{i_2} \epsilon_j = 0$, and $\sum_{j=i_0}^i \epsilon_j < 0$ for all $i_0 \leq i < i_2$. Therefore $\epsilon_{i_0}, \dots, \epsilon_{i_2}$ is a free linear segment of level $\lambda-1$.
\end{proof}

\begin{corollary} \label{corollary:freelinearsegmentcontains}
For $\lambda \geq 2$, $a_{\lambda}(\epsilon) \leq a_{\lambda-1}(\epsilon)$.
\end{corollary}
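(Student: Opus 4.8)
The plan is to upgrade Proposition~\ref{proposition:freelinearsegmentcontains} from an existence statement into an injection. That proposition already attaches to each free linear segment of level $\lambda$ a free linear segment of level $\lambda-1$ contained inside it, so the only thing left to check is that two \emph{different} level-$\lambda$ segments cannot both contain one and the same level-$(\lambda-1)$ segment. For this I would prove the sharper structural fact that \emph{two distinct free linear segments which share an index are nested, the inner one having strictly smaller level}. Specialising to equal levels, this says that distinct level-$\lambda$ free linear segments are pairwise disjoint; a nonempty level-$(\lambda-1)$ segment then sits inside at most one of them, so the assignment supplied by Proposition~\ref{proposition:freelinearsegmentcontains} is injective, and therefore $a_\lambda(\epsilon)\le a_{\lambda-1}(\epsilon)$.

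To establish the structural fact, write $S^{(a)}_i:=\sum_{j=a}^{i}\epsilon_j$. By conditions (1)--(3) of Definition~\ref{definition:freelinearsegment}, once the left endpoint $a$ of a free linear segment is fixed the right endpoint $b$ is forced: it is the least index exceeding $a$ with $S^{(a)}_b=0$ (and then automatically $\epsilon_b=1$), and the level $\lambda$ is the maximum of $-S^{(a)}_i$ over $a\le i\le b-1$. In particular a free linear segment is determined by its left endpoint, which disposes of the case of equal left endpoints. So suppose $\epsilon_a,\dots,\epsilon_b$ has level $\lambda_1$, that $\epsilon_{a'},\dots,\epsilon_{b'}$ has level $\lambda_2$, and that they share an index with $a<a'$ (the reverse inequality being symmetric after interchanging the two segments); sharing an index then forces $a'\le b$, and in fact $a'\le b-1$ since $\epsilon_{a'}=-1\ne 1=\epsilon_b$. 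From condition (3) for the first segment, $S^{(a)}_{a'-1}\le -1$, while $S^{(a)}_{b-1}=S^{(a)}_b-\epsilon_b=-1$. Hence $S^{(a')}_{b-1}=S^{(a)}_{b-1}-S^{(a)}_{a'-1}\ge 0$; since $S^{(a')}_{a'}=\epsilon_{a'}=-1$ and $S^{(a')}$ moves by $\pm1$, its first return to $0$ occurs at some $b'\le b-1$, so $\epsilon_{a'},\dots,\epsilon_{b'}$ is a proper subsegment of $\epsilon_a,\dots,\epsilon_b$. Finally, for every $i$ with $a'\le i\le b'-1\le b-1$ we have $S^{(a')}_i=S^{(a)}_i-S^{(a)}_{a'-1}\ge -\lambda_1+1$, hence $\lambda_2\le \lambda_1-1<\lambda_1$, which is exactly the asserted nesting with a strict drop in level.

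The genuine content is this nesting/disjointness statement; deducing the corollary from it together with Proposition~\ref{proposition:freelinearsegmentcontains} is then purely formal, and I expect the disjointness argument itself to be the main obstacle. The one technicality to be careful about is that $\epsilon$ is a circular sequence, so a free linear segment may wrap around the end of $\epsilon$; but when comparing two given segments one may first cyclically rotate $\epsilon$ so that neither of them wraps, after which the partial-sum argument above applies verbatim. It seems worth recording the statement in the stronger nested form even though only the equal-level case is needed for Corollary~\ref{corollary:freelinearsegmentcontains}, since the full laminar picture of free linear segments is likely to be reused.
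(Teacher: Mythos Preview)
Your argument is correct and follows the paper's route via Proposition~\ref{proposition:freelinearsegmentcontains}; the paper records only ``this is an easy consequence'' and leaves implicit precisely the injectivity step that you spell out. Your nesting lemma---that distinct free linear segments sharing an index are strictly nested with a strict drop in level, so equal-level segments are pairwise disjoint---is the natural way to fill this gap, and the circular subtlety is handled as you indicate (equivalently, by lifting to the $s$-periodic sequence on $\mathbb{Z}$ and noting that distinct circular segments have distinct starting points modulo $s$).
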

\begin{proof}
This is an easy consequence of Proposition \ref{proposition:freelinearsegmentcontains}.
\end{proof}

\begin{lemma} \label{lemma:pdivbasic3}
Each free linear segment $\epsilon_{t_1}, \dots, \epsilon_{t_n}$ of level $\lambda \in I_m$ in $\epsilon$ corresponds to a unique free linear digraph of the form $x_{l_1,t_1}, x_{l_2,t_2}, \cdots, x_{l_n,t_n}$ in $\Gamma^m_{\epsilon}$, where $l_i$ is given by the formula:
\begin{equation*}
l_i = 
\begin{cases}
l_1+\sum_{j=1}^i \epsilon_{t_j}+1 = m-1+\sum_{j=1}^i \epsilon_{t_j}+1 & \text{if } \epsilon_{t_i} = -1,\\
l_1+\sum_{j=1}^i \epsilon_{t_j}+1 = m-1+\sum_{j=1}^i \epsilon_{t_j} & \text{if } \epsilon_{t_i}=1.
\end{cases}
\end{equation*}
Furthermore, we have $\mathrm{max}\{l_i \mid i \in I_n \} = l_1=l_n=m-1$ and $\mathrm{min}\{l_i \mid i \in I_n\} = m-\lambda \geq 0$.
\end{lemma}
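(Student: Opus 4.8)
The plan is to establish the asserted correspondence, getting the formula and uniqueness for free from Lemmas \ref{lemma:pdivbasic1} and \ref{lemma:pdivbasic2} and then proving existence by an explicit construction. By Theorems \ref{firstreduction} and \ref{secondreduction} we may assume $\epsilon_i\in\{1,-1\}$ throughout. First I would note that if $x_{l_1,t_1},\dots,x_{l_n,t_n}$ is a free linear digraph of $\Gamma^m_\epsilon$ whose sign sequence $(\epsilon_{t_1},\dots,\epsilon_{t_n})$ equals the given segment, then the columns $t_1,t_2,\dots$ are consecutive modulo $s$ (every directed edge runs from a column $t$ to the column $t+1$), so the digraph is determined by its origin; Lemma \ref{lemma:pdivbasic1} identifies that origin as $x_{m-1,t_1}$, and Lemma \ref{lemma:pdivbasic2} then forces exactly the displayed formula for every $l_i$. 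This yields uniqueness and the formula, so only existence remains.

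Next I would rewrite the formula. Put $S_0:=0$ and $S_i:=\sum_{j=1}^i\epsilon_{t_j}$; a one-line check in the two cases $\epsilon_{t_i}=\pm1$ shows the formula is equivalent to $l_i=m+\min\{S_{i-1},S_i\}$. Using conditions (1)--(4) of Definition \ref{definition:freelinearsegment} together with $\epsilon_{t_1}=-1$ (so $S_1=-1$) and $\epsilon_{t_n}=1$ (so $S_{n-1}=-1$ and $S_n=0$), one gets $-\lambda\le\min\{S_{i-1},S_i\}\le-1$ for every $i\in I_n$, with value $-1$ at $i=1$ and $i=n$ and value $-\lambda$ for at least one $i$. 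Hence $\max_i l_i=m-1=l_1=l_n$ and $\min_i l_i=m-\lambda\ge0$ (the last inequality because $\lambda\le m$); in particular each $l_i\in\{0,\dots,m-1\}$, so the $x_{l_i,t_i}$ are genuine vertices of $\Gamma^m_\epsilon$ and the ``furthermore'' assertion is proved.

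For existence I would take the vertices $x_{l_1,t_1},\dots,x_{l_n,t_n}$ given by the formula and verify, by inspecting the digraph $\Gamma^m_{\epsilon_{t_i},\epsilon_{t_{i+1}}}$ of Subsection \ref{subsection:weighteddigraphs} in the four sign patterns in $\{1,-1\}^2$, that (a) for each $i\in I_{n-1}$ there is an edge $x_{l_i,t_i}\to x_{l_{i+1},t_{i+1}}$, of weight $l_{i+1}-l_i=\mathrm{lv}(\epsilon_{t_i},\epsilon_{t_{i+1}})$ by \eqref{equation:changeofweight}, and (b) $x_{l_1,t_1}=x_{m-1,t_1}$ has no incoming edge and $x_{l_n,t_n}=x_{m-1,t_n}$ has no outgoing edge, and neither is a marked-zero vertex. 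Since in any column a vertex is the source of at most one edge and the target of at most one edge, (a) and (b) identify the connected component of $x_{l_1,t_1}$ in $\Gamma^m_\epsilon$ with the path $x_{l_1,t_1},\dots,x_{l_n,t_n}$, which is therefore a linear digraph; and since its origin is not a zero vertex, no vertex on it is (a single zero vertex in a linear digraph forces all of its vertices to be zero), so it is a free linear digraph realizing the segment.

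The one genuine obstacle is the bookkeeping in (a)--(b), and it is purely a boundary-behaviour issue: one must check that no $x_{l_i,t_i}$ lands on a vertex that is already marked zero and that no edge has target index exceeding $m-1$. The bound $m-\lambda\le l_i\le m-1$ from the previous paragraph, combined with $S_i<0$ for $i\in I_{n-1}$, handles every pattern except $(\epsilon_{t_i},\epsilon_{t_{i+1}})=(-1,-1)$, where one additionally needs $S_i\ge-(\lambda-1)$; this holds because condition (3) of Definition \ref{definition:freelinearsegment}, applied at index $i+1$ (which is not $n$, since $\epsilon_{t_n}=1\ne\epsilon_{t_{i+1}}$), forbids $S_{i+1}=S_i-1$ from being less than $-\lambda$.
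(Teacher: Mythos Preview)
Your proposal is correct and follows essentially the same strategy as the paper: derive the formula and uniqueness from Lemmas~\ref{lemma:pdivbasic1} and~\ref{lemma:pdivbasic2}, establish the range $m-\lambda\le l_i\le m-1$ from the defining conditions of a free linear segment, and then verify existence by checking, in each of the four sign patterns $(\epsilon_{t_i},\epsilon_{t_{i+1}})\in\{1,-1\}^2$, that the relevant case of Subsection~\ref{subsection:weighteddigraphs} supplies the required edge. Your reformulation $l_i=m+\min\{S_{i-1},S_i\}$ is a tidy way to unify the two cases of the formula and makes the range computation cleaner than the paper's direct case split, but the underlying argument is the same.
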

\begin{proof}
As $\epsilon_{t_1} = -1$, we know that the only vertex in $\Gamma^m_{\epsilon}$ of the form $x_{l_1,t_1}$ that is not the target of any edge is $x_{m-1,t_1}$. Similarly, as $\epsilon_{t_n} = 1$, we know that the only vertex in $\Gamma^m_{\epsilon}$ of the form $x_{l_n,t_n}$ that is not the source of any edge is $x_{m-1,t_n}$. For every $i \in \{2, \dots, n-1\}$, if $\epsilon_{t_i} = -1$, then $-\lambda \leq \sum_{j=1}^i\epsilon_{t_j} \leq -2$ and thus $m-\lambda \leq l_i \leq m-2$; if $\epsilon_{t_i} = 1$, then $-\lambda+1 \leq \sum_{j=1}^i \epsilon_{t_j} \leq -1$ and thus $m-\lambda \leq l_i \leq m-2$. Hence for all $i \in \{2, \dots, n-1\}$, we have $0 \leq m-\lambda \leq l_i < m-1$. Let $i_1 \in \{2, \dots, n-1\}$ be such that $\sum_{j=1}^{i_1} \epsilon_{t_j} = -\lambda$. As $\epsilon_{t_{i_1}} = -1$, we know that $l_{t_{i_1}} = m-\lambda$. This means that $\mathrm{min}\{l_i \mid i \in I_n\} = m-\lambda$. On the other hand, we know that $l_{t_1} = l_{t_n}=m-1$. This means that $\mathrm{max}\{l_i \mid i \in I_n \} = m-1$.

To show that $x_{l_1,t_1}, x_{l_2,t_2}, \cdots, x_{l_n,t_n}$ is the unique free linear digraph of $\Gamma^m_{\epsilon}$ corresponding to the free linear segment $\epsilon_{t_1}, \dots, \epsilon_{t_n}$ of level $\lambda$ in $\epsilon$, it remains to show that for every $i \in I_n$, there is an edge whose source is $x_{l_i,t_i}$ and whose target is $x_{l_{i+1},t_{i+1}}$. We discuss this in four cases:
\begin{enumerate}
\item If $(\epsilon_{t_i}, \epsilon_{t_{i+1}}) = (-1,-1)$, then $l_{i+1} - l_i = -1$ and $-\lambda+1 \leq \sum_{j=1}^i \epsilon_{t_j} \leq -1$. Hence, 
\[l_i  = m+\sum_{j=1}^i \epsilon_{t_j} \in \{m-\lambda+1, m-\lambda+2, \dots, m-1\}.\]
According to the construction of $\Gamma^m_{\epsilon}$ (see Case (8) of Subsection \ref{subsection:weighteddigraphs}), there is an edge whose source is $x_{l_i,t_i}$ and whose target is $x_{l_{i+1},t_{i+1}}$.
\item If $(\epsilon_{t_i}, \epsilon_{t_{i+1}}) = (1,1)$, then $l_{i+1} - l_i = 1$ and $-\lambda+2 \leq  \sum_{j=1}^{i+1} \epsilon_{t_j} \leq 0$. Hence,
\[l_{i+1} = m-1+\sum_{j=1}^{i+1} \epsilon_{t_j} \in \{m-\lambda+1, m-\lambda-2, \dots, m-1\}.\]
According to the construction of $\Gamma^m_{\epsilon}$ (see Case (10) of Subsection \ref{subsection:weighteddigraphs}), there is an edge whose source is $x_{l_i,t_i}$ and whose target is $x_{l_{i+1},t_{i+1}}$.
\item If $(\epsilon_{t_i}, \epsilon_{t_{i+1}}) = (1,-1)$, then $l_{i+1} - l_i = 0$ and $-\lambda +1 \leq \sum_{j=1}^{i} \epsilon_{t_j} \leq -1$ (so $\lambda>1$). Hence 
\[l_i = l_{i+1} = m-1+\sum_{j=1}^i \epsilon_{t_j} \in \{m-\lambda, m-\lambda+1, \dots, m-2 \}.\]
According to the construction of $\Gamma^m_{\epsilon}$ (see Case (1) of Subsection \ref{subsection:weighteddigraphs}), there is an edge whose source is $x_{l_i,t_i}$ and whose target is $x_{l_{i+1},t_{i+1}}$ (even if $\lambda=m$).
\item If $(\epsilon_{t_i}, \epsilon_{t_{i+1}}) = (-1,1)$, then $l_{i+1} - l_i=0$ and $-\lambda+1 \leq \sum_{j=1}^{i+1} \epsilon_{t_j} \leq 0$. Hence 
\[l_i = l_{i+1} = m-1+\sum_{j=1}^{i+1} \epsilon_{t_j} \in \{m-\lambda, m-\lambda+1, \dots, m-1\}.\]
According to the construction of $\Gamma^m_{\epsilon}$ (see Case (2) of Subsection \ref{subsection:weighteddigraphs}), there is an edge whose source is $x_{l_i,t_i}$ and whose target is $x_{l_{i+1},t_{i+1}}$ (even if $\lambda =m$). \qedhere
\end{enumerate}
\end{proof}

\begin{lemma} \label{lemma:freelineartoolow}
Let $\epsilon_{t_1}, \epsilon_{t_2}, \dots, \epsilon_{t_n}$ be a free linear segment of level $\lambda > m$ in $\epsilon$. Let $a \in \{2, \dots, n-1\}$ be the smallest integer such that \[\sum_{i=1}^{a} \epsilon_{t_i}=-m-1.\] If $x_{l_1,t_1}, x_{l_2,t_2}, \dots, x_{l_b,t_b}$ is a linear digraph of $\Gamma_{\epsilon}^m$ with $b \in \{2, \dots, n \}$, then $b < a < n$. Thus, $\Gamma_{\epsilon}^m$ does not contain a free linear digraph of the form $x_{l_1,t_1}, x_{l_2,t_2}, \cdots, x_{l_n,t_n}$.
\end{lemma}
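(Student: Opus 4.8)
The plan is to follow the first subscripts of a prospective linear digraph with positions $t_1,t_2,\dots$ (the positions of the free linear segment) and to show they are forced below $0$ before position $t_n$ is reached. Throughout I write $S_i:=\sum_{j=1}^{i}\epsilon_{t_j}$ for $0\le i\le n$; then $S_0=0$, $S_1=-1$ (since $\epsilon_{t_1}=-1$ by Definition \ref{definition:freelinearsegment}), and consecutive partial sums differ by exactly $1$.

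First I would check that $a$ is well defined, that $a\in\{2,\dots,n-1\}$, and that $\epsilon_{t_a}=-1$. Condition (4) of Definition \ref{definition:freelinearsegment} together with $\lambda>m$ gives an index $i_1\in\{1,\dots,n-1\}$ with $S_{i_1}=-\lambda\le-(m+1)$; since $(S_i)$ starts at $0$ and moves by unit steps, it must attain the value $-(m+1)$ at some index in $\{1,\dots,i_1\}$, so $a:=\min\{\,i:S_i=-(m+1)\,\}$ exists and satisfies $2\le a\le i_1\le n-1$ (the lower bound because $S_1=-1\ne-(m+1)$). Minimality of $a$ together with the unit-step property forces $S_{a-1}\ge -m$, and then $S_a=S_{a-1}+\epsilon_{t_a}=-(m+1)$ with $\epsilon_{t_a}\in\{\pm1\}$ leaves only $\epsilon_{t_a}=-1$, hence $S_{a-1}=-m$.

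Next I would carry the subscript formula of Lemma \ref{lemma:pdivbasic2} over to the present, slightly more general, setting. Let $x_{l_1,t_1},x_{l_2,t_2},\dots,x_{l_b,t_b}$ be a linear digraph of $\Gamma^m_\epsilon$ with $b\in\{2,\dots,n\}$. Every edge of $\Gamma^m_\epsilon$ goes from a vertex at some position $t$ to a vertex at position $t+1$, so the vertices of this digraph indeed occupy the consecutive positions $t_1,\dots,t_b$, and for each edge $x_{l_i,t_i}\to x_{l_{i+1},t_{i+1}}$ the cases of Subsection \ref{subsection:weighteddigraphs} give $l_{i+1}-l_i=\epsilon_{t_{i+1}}$ when $\epsilon_{t_i}=\epsilon_{t_{i+1}}$ and $l_{i+1}=l_i$ otherwise, exactly as in the proof of Lemma \ref{lemma:pdivbasic2}. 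The sign-change bookkeeping there uses only $\epsilon_{t_1}=-1$, not the value of $l_1$, and therefore yields, for every $i\in\{1,\dots,b\}$,
\[
l_i=l_1+S_i+1\ \ \text{if }\epsilon_{t_i}=-1,\qquad l_i=l_1+S_i\ \ \text{if }\epsilon_{t_i}=1 .
\]

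Finally I would derive the contradiction. If $b\ge a$ then $x_{l_a,t_a}$ is a vertex of the digraph, so $l_a\ge 0$; but $\epsilon_{t_a}=-1$ and $S_a=-(m+1)$ give $l_a=l_1+S_a+1=l_1-m\le(m-1)-m=-1$, which is impossible. Hence $b<a$, and since $a<n$ we get $b<n$; in particular no linear digraph has positions $t_1,\dots,t_n$, and a fortiori $\Gamma^m_\epsilon$ has no free linear digraph of the form $x_{l_1,t_1},\dots,x_{l_n,t_n}$. The step I expect to require the most care is the transport step: Lemma \ref{lemma:pdivbasic2} is stated for \emph{free} linear digraphs, for which $l_1=m-1$ by Lemma \ref{lemma:pdivbasic1}, whereas here the digraph may contain zero vertices and $l_1$ is not known in advance; one has to inspect the proof of Lemma \ref{lemma:pdivbasic2} and observe that it never uses $l_1=m-1$, only $\epsilon_{t_1}=-1$, so the formula still holds and the bound $l_1\le m-1$ — automatic for a legitimate subscript — is all the final inequality needs.
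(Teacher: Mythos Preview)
Your proof is correct and follows essentially the same route as the paper: transport the subscript formula from Lemma~\ref{lemma:pdivbasic2} (the paper cites Lemma~\ref{lemma:pdivbasic3}, which contains the same case analysis) to a general linear digraph, observe that minimality of $a$ forces $\epsilon_{t_a}=-1$, and derive $l_a<0$ from $b\ge a$. You are in fact slightly more careful than the paper, which writes $l_a=m-1+(-m-1)+1=-1$ as though $l_1=m-1$, whereas you correctly use only $l_1\le m-1$; your explicit discussion of why the sign-change bookkeeping of Lemma~\ref{lemma:pdivbasic2} carries over without the freeness hypothesis is exactly the point that needs noting.
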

\begin{proof}
For $i \in \{1, \dots, b-1\}$, the difference $l_{i+1}-l_i$ is as in the four cases of the proof of Lemma \ref{lemma:pdivbasic3}. Thus as in the mentioned proof we get that for $i\in\{1,\ldots,b\}$ we have
\begin{equation*}
l_i =
\begin{cases}
l_1+\sum_{j=1}^i \epsilon_{t_j}+1 & \text{if } \epsilon_{t_i} = -1,\\
l_1+\sum_{j=1}^i \epsilon_{t_j} & \text{if } \epsilon_{t_i}=1.
\end{cases}
\end{equation*}
In particular, the smallest property of $a$ implies that $\epsilon_{t_a}=-1$, and therefore if $b \geq a$ we get that 
\[l_a = l_1+ \sum_{i=1}^a \epsilon_{t_i} + 1 = m-1 +(-m-1)+1 = -1 <0,\]
a contradiction to $l_a \in \{0, 1, \dots, m-1\}$. Thus the lemma holds.
\end{proof}

\begin{theorem} \label{theorem:freelinearcal}
Let $\epsilon = (\epsilon_1, \epsilon_2, \dots, \epsilon_s)$ be a circular sequence of integers such that $\epsilon_i \in \{1, -1\}$ for all $i \in I_s$. We have $\ell(\Gamma^m_{\epsilon}) = \sum_{\lambda=1}^m a_{\lambda}(\epsilon)$.
\end{theorem}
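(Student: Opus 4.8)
The plan is to set up a bijection between the free linear digraphs of $\Gamma^m_\epsilon$ and the free linear segments of $\epsilon$ of level at most $m$; since the number of the latter equals $\sum_{\lambda=1}^m a_\lambda(\epsilon)$ by definition, the theorem follows. One direction is furnished by Lemma \ref{lemma:pdivbasic3}, which assigns to each free linear segment $\epsilon_{t_1}, \dots, \epsilon_{t_n}$ of level $\lambda \in I_m$ the unique free linear digraph $x_{l_1,t_1}, \dots, x_{l_n,t_n}$ of $\Gamma^m_\epsilon$ with the $l_i$ prescribed there. For the other direction I attach to a free linear digraph $D = (x_{l_1,t_1}, \dots, x_{l_n,t_n})$ the sequence $\Psi(D) := (\epsilon_{t_1}, \dots, \epsilon_{t_n})$. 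Because each edge of $\Gamma^m_\epsilon$ runs from a vertex in column $t$ to a vertex in column $t+1$, consecutive vertices of $D$ satisfy $t_{i+1} = t_i + 1$, and the length of $D$ is bounded by $s$ (the corollary bounding the length of free linear digraphs), so $\Psi(D)$ is genuinely a segment of the circular sequence $\epsilon$.

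Next I would verify that $\Psi(D)$ is a free linear segment of level at most $m$. Conditions (1) and (2) of Definition \ref{definition:freelinearsegment} are exactly the statements $\epsilon_{t_1} = -1$, $\epsilon_{t_n} = 1$ of Lemma \ref{lemma:pdivbasic1} (with $n \ge 2$ by Corollary \ref{corollary:freelinearsinglevertex}) together with $\sum_{j=1}^n \epsilon_{t_j} = 0$ of Lemma \ref{lemma:pdivbasic2}. From the inequalities $-m \le \sum_{j=1}^i \epsilon_{t_j} < 0$ for $i \in I_{n-1}$ in Lemma \ref{lemma:pdivbasic2}, setting $\lambda := -\min_{1 \le i \le n-1} \sum_{j=1}^i \epsilon_{t_j}$ gives $1 \le \lambda \le m$ (the lower bound because $\epsilon_{t_1} = -1$), and conditions (3) and (4) are immediate from this choice of $\lambda$.

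Finally I would check that the two assignments are mutually inverse. The composite segment $\to$ digraph $\to$ segment is the identity, since the digraph produced by Lemma \ref{lemma:pdivbasic3} has its vertices in columns $t_1, \dots, t_n$, so reading off the second subscripts returns the starting segment. For the composite digraph $\to$ segment $\to$ digraph, note that the first subscripts of any free linear digraph are forced: the origin is $x_{m-1,t_1}$ by Lemma \ref{lemma:pdivbasic1}, after which Lemma \ref{lemma:pdivbasic2} pins down every $l_i$ by precisely the formula appearing in Lemma \ref{lemma:pdivbasic3}; hence the uniqueness clause of that lemma forces the round trip to return $D$. Therefore the two assignments are inverse bijections and $\ell(\Gamma^m_\epsilon) = \sum_{\lambda=1}^m a_\lambda(\epsilon)$. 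I do not foresee a genuine obstacle, since all the combinatorial substance is already carried by Lemmas \ref{lemma:pdivbasic1}, \ref{lemma:pdivbasic2}, and \ref{lemma:pdivbasic3}; the only thing needing a bit of care is confirming that the column labels of a free linear digraph form an honest segment of $\epsilon$, which follows from the column structure of the edges and the length bound $n \le s$. Lemma \ref{lemma:freelineartoolow} is not used in the bijection, but it reassures us that free linear segments of level greater than $m$ correctly contribute nothing to $\ell(\Gamma^m_\epsilon)$.
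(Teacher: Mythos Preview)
Your proposal is correct and follows essentially the same approach as the paper: both arguments use Lemma \ref{lemma:pdivbasic3} to pass from free linear segments of level $\le m$ to free linear digraphs, and Lemmas \ref{lemma:pdivbasic1}--\ref{lemma:pdivbasic2} (plus the length bound $n\le s$) to go back. The paper phrases this as a pair of inequalities $\ell(\Gamma^m_\epsilon)\ge\sum a_\lambda$ and $\ell(\Gamma^m_\epsilon)\le\sum a_\lambda$ rather than an explicit bijection, but the content is identical.
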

\begin{proof}
By Lemma \ref{lemma:pdivbasic3}, we know that $\ell(\Gamma^m_{\epsilon}) \geq \sum_{\lambda=1}^m a_{\lambda}(\epsilon)$.

Let $x_{l_1,t_1}, x_{l_2,t_2}, \cdots, x_{l_n,t_n}$ be a free linear digraph of $\Gamma^m_{\epsilon}$. By Lemma \ref{lemma:pdivbasic1}, we know that $\epsilon_{t_1} = -1$ and $\epsilon_{t_n}=1$. By Lemma \ref{lemma:pdivbasic2}, we know that $\sum_{j=1}^n \epsilon_{t_j} = 0$. Let
\[S := \left\{-\sum_{j=1}^i \epsilon_{t_j} \; | \; 1 \leq i <n \right\} \quad \textrm{ and } \quad \lambda:= \textrm{max}(S).\]
By Lemma \ref{lemma:pdivbasic2}, we know that every element in $S$ must be positive and $1 \leq \lambda \leq m$. Thus $\epsilon_{t_1}, \dots, \epsilon_{t_n}$ is a free linear segment of level $\lambda$. From this and the uniqueness part of Lemma \ref{lemma:pdivbasic3}, we get that $\ell(\Gamma^m_{\epsilon}) \leq \sum_{\lambda=1}^m a_{\lambda}(\epsilon)$.
\end{proof}

\subsection{Calculation of $c(\Gamma^m_{\epsilon})$}
If $c(\Gamma^m_{\epsilon}) >0$, then $\sum_{i=1}^s \epsilon_i = 0$ by Proposition \ref{proposition:circularzero}. In this subsection, we assume that $\sum_{i=1}^s \epsilon_i = 0$ unless mentioned otherwise. Let 
\[\lambda_{\epsilon} = \textrm{max}\left\{|\sum_{i=a}^b \epsilon_i | \, \biggr\rvert \, 1 \leq a \leq s, a \leq b \leq a+s-1\right\} .\]
Note that $\lambda_{\epsilon} = 0$ if and only if $\epsilon_i = 0$ for every $i \in I_s$. If $\lambda_{\epsilon} \geq 1$, by a circular rearrangement of $\epsilon$, there exists $1 \leq s' < s$ such that $\epsilon_{s'}, \dots, \epsilon_{s}$ is a free linear segment of level $\lambda_{\epsilon}$.

\begin{lemma} \label{lemma:breakupfreelinear}
Let $\epsilon = (\epsilon_1, \epsilon_2, \dots, \epsilon_s)$ be a circular sequence such that $\epsilon_j \in \{1,-1\}$ for every $j \in I_s$ and $\sum_{j=1}^s \epsilon_j = 0$. Then there exists a decomposition 
\[\epsilon = (\widehat{\epsilon}_t, \widehat{\epsilon}_{t-1}, \cdots, \widehat{\epsilon}_1)\]
such that for each $i \in I_t$, $\widehat{\epsilon}_i$ is a free linear segment of $\epsilon$ of level $\lambda_i$ with $1 \leq \lambda_i \leq \lambda_{\epsilon}$.
\end{lemma}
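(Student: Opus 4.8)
The plan is to recast the statement in terms of the lattice path of partial sums and to normalize that path by a cycle-lemma style rotation. For a given linearization $\epsilon_1,\dots,\epsilon_s$ put $P_0=0$ and $P_k=\epsilon_1+\cdots+\epsilon_k$, so that $P$ is a $\pm 1$ walk from $0$ to $P_s=0$. The first step is to observe that, after replacing $\epsilon$ by a suitable circular rotation — which changes neither the hypotheses nor the rotation-invariant quantity $\lambda_\epsilon$ — one may assume $P_k\le 0$ for every $k$: if $i^*\in\{0,1,\dots,s-1\}$ is an index at which $P$ attains its maximum, then reading the sequence starting at the entry $\epsilon_{i^*+1}$ makes every partial sum (including the wrapped-around ones, which are $P_k-P_{i^*}$ since $P_s=0$) equal to $P_j-P_{i^*}\le 0$, and the new leading entry is $\epsilon_{i^*+1}=P_{i^*+1}-P_{i^*}\le 0$, hence $-1$. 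This normalization is the heart of the proof: without it, a stretch of the walk lying above $0$ would produce an arc beginning with $+1$, which violates condition (1) of Definition~\ref{definition:freelinearsegment}.

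With $P_k\le 0$ in force, let $0=z_0<z_1<\cdots<z_t=s$ be precisely the indices $k$ with $P_k=0$ (so $P_s=0$ forces $z_t=s$), and let $A_i$ denote the arc $\epsilon_{z_{i-1}+1},\dots,\epsilon_{z_i}$ for $i=1,\dots,t$; relabelling $\widehat\epsilon_i:=A_{t+1-i}$ yields the claimed decomposition $\epsilon=(\widehat\epsilon_t,\dots,\widehat\epsilon_1)$. A preliminary remark is that each $A_i$ has length at least $2$, since a length-$1$ arc would be a single entry $\epsilon_{z_i}=P_{z_i}-P_{z_i-1}=0$, which is impossible.

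Next I would verify that each $A_i$ is a free linear segment, which is routine once one notes that the partial sums inside $A_i$ are $\sum_{j=z_{i-1}+1}^{k}\epsilon_j=P_k-P_{z_{i-1}}=P_k$. Condition (2) reads $P_{z_i}-P_{z_{i-1}}=0$. For $z_{i-1}<k<z_i$ we have $P_k\le 0$ by the normalization and $P_k\ne 0$ by maximality of the gap between consecutive zeros of $P$, hence $P_k\le -1$; taking $k=z_{i-1}+1$ and $k=z_i-1$ (both strictly between $z_{i-1}$ and $z_i$ because $A_i$ has length $\ge 2$) gives $\epsilon_{z_{i-1}+1}=-1$ and $\epsilon_{z_i}=1$, which is condition (1). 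Setting $\lambda_i:=-\min\{P_k : z_{i-1}<k<z_i\}$, which is $\ge 1$ since $P_{z_{i-1}+1}=-1$, gives conditions (3) and (4) at once.

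It remains to bound $\lambda_i$. By construction $\lambda_i=\bigl|\sum_{j=z_{i-1}+1}^{k}\epsilon_j\bigr|$ for some $k$ in $A_i$, and this sum is one of the quantities over which the maximum defining $\lambda_\epsilon$ is taken, so $1\le\lambda_i\le\lambda_\epsilon$ (indeed the arc containing a global minimizer of $P$ has level exactly $\lambda_\epsilon$). The only delicate point in the argument is the rotation step together with correctly tracking that the wrapped-around partial sums are $P_j-P_{i^*}$; everything afterwards is a direct unwinding of Definition~\ref{definition:freelinearsegment}.
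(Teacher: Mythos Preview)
Your proof is correct and is a clean variant of the paper's argument. Both proofs begin with a circular rotation, but the normalizations differ: the paper rotates so that a segment of maximal level $\lambda_\epsilon$ sits at the very end, and then constructs the remaining $\widehat\epsilon_i$ greedily from right to left, at each step invoking the maximality of $\lambda_\epsilon$ to force the entry just before the current block to equal $+1$. Your rotation instead forces every partial sum $P_k\le 0$ in one stroke (the cycle-lemma move), after which the decomposition is global rather than iterative: the blocks are simply the arcs between consecutive zeros of $P$, and the free-linear-segment conditions fall out immediately from $P_k<0$ strictly between zeros. Your route is a bit more conceptual and avoids the step-by-step invocation of maximality; the paper's route has the minor advantage that the block of level exactly $\lambda_\epsilon$ is placed at the end by construction (a fact used just after the lemma), whereas in your argument this is recovered from the observation that the arc containing a global minimizer of $P$ has level $\lambda_\epsilon$.
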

\begin{proof}
To prove the lemma it suffices to find \[1 =: s_t < s_{t-1} < s_{t-2} < \cdots < s_1 < s_0:= s\] such that for all $2 \leq i \leq t$, 
\[\widehat{\epsilon}_i = (\epsilon_{s_i}, \epsilon_{s_i+1}, \dots, \epsilon_{s_{i-1}-1})\]
is a free linear segment of level $\lambda_i$, where $1 \leq \lambda_i \leq \lambda_{\epsilon}$, and
\[\widehat{\epsilon}_1 = (\epsilon_{s_1}, \epsilon_{s_1+1}, \dots, \epsilon_{s_0})\]
is a free linear segment of level $\lambda_1 = \lambda_{\epsilon}$.

By the paragraph before Lemma \ref{lemma:breakupfreelinear}, we can define $s_1 :=s'$ and we have a free linear segment $\epsilon_{s_1}, \dots, \epsilon_{s_0}$ of level $\lambda_{\epsilon}$. If $s_1 = 1$, then we are done. Otherwise, let $s_1 \leq s_1' < s_0$ be the smallest integer such that $\sum_{j = s_1}^{s_1'} \epsilon_j = -\lambda_{\epsilon}$. If $\epsilon_{s_1-1} = -1$, then $\sum_{j=s_1-1}^{s_1'} = -\lambda_{\epsilon}-1$ and this contradicts the maximality of $\lambda_{\epsilon}$. Hence $\epsilon_{s_1-1}=1$ and $s_1 \geq 2$. As $\sum_{j=1}^s \epsilon_i = 0 = \sum_{j=s_1}^s \epsilon_i$, we have $\sum_{j=1}^{s_1-1} \epsilon_j = 0$. Thus we can speak about the largest integer $s_2 \in \{1, 2, \dots, s_1-2\}$ such that we have $\sum_{j=s_2}^{s_1-1} \epsilon_j= 0$. Hence $\epsilon_{s_2}, \dots, \epsilon_{s_1-1}$ is a free linear segment of level $\lambda_2 \leq \lambda_{\epsilon}$ due to the maximality of $\lambda_{\epsilon}$. Let $s_2 \leq s_2' < s_1$ be the smallest integer such that $\sum_{j=s_2}^{s_2'} \epsilon_j = -\lambda_2$. If $s_2 = 1$, then we are done. Suppose now $s_2 \geq 2$. If $\epsilon_{s_2-1} = -1$, then $\sum_{i=s_2-1}^{s_2'} = -\lambda_2-1$, a contradiction. Thus $\epsilon_{s_2-1} = 1$. Continuing in this fashion, we can find \[s_t:=1 \leq s_{t-1} < s_{t-2} < \cdots < s_1 < s_0:= s\] such that for all $2 \leq i \leq t$, 
\[\epsilon_{s_i}, \epsilon_{s_i+1}, \dots, \epsilon_{s_{i-1}-1}\]
 is a free linear segment of level $\lambda_i$, where $1 \leq \lambda_i \leq \lambda_{\epsilon}$.
\end{proof}

\begin{definition} \label{definition:circularsequencelevel}
Let $\epsilon = (\epsilon_1, \epsilon_2, \dots, \epsilon_s)$ be a circular sequence of integers such that $\epsilon_i \in \{1,0, -1\}$ for all $1 \leq i \leq s$. We call $\epsilon$ a circular sequence of level $\lambda_{\epsilon} \geq 0$ if it satisfies the following conditions:
\begin{enumerate}
\item we have $\sum_{j=1}^s \epsilon_j = 0$;
\item for all $1 \leq a \leq s$ and $a \leq b \leq a+s-1$, we have $|\sum_{j=a}^b \epsilon_j| \leq \lambda_{\epsilon}$;
\item there exist $1 \leq a_0 \leq s$ and $a_0 \leq b_0 \leq a_0+s-1$ such that $|\sum_{j=a_0}^{b_0} \epsilon_j| = \lambda_{\epsilon}$.
\end{enumerate}
\end{definition}

By Proposition \ref{proposition:circularzero}, we know that for every circular sequence $\epsilon = (\epsilon_1, \epsilon_2, \dots, \epsilon_s)$ such that $\epsilon_i \in \{-1,1\}$, if $\Gamma_{\epsilon}^m$ contains a circular digraph, then $\sum_{i=1}^s \epsilon_i = 0$. Given a circular sequence $\epsilon = (\epsilon_1, \epsilon_2, \dots, \epsilon_s)$ of level $\lambda_{\epsilon}$ with $\sum_{i=1}^s \epsilon_i = 0$, we want to know how many circular digraphs there are in $\Gamma_{\epsilon}^m$. By Lemma \ref{lemma:breakupfreelinear}, we know that $\epsilon = (\widehat{\epsilon}_1, \widehat{\epsilon}_2, \dots, \widehat{\epsilon}_t)$, where $\widehat{\epsilon}_i$ is a free linear segment in $\epsilon$ of level $1 \leq \lambda_i \leq \lambda_{\epsilon}$ for all $i \in I_t$, and $\lambda_t = \lambda_{\epsilon}$.\footnote{For notational purpose, we reverse the order of the indices in $\epsilon$ from $\widehat{\epsilon}_1$ to $\widehat{\epsilon}_t$.} For each $i \in I_t$, let $\widehat{\epsilon}_i = (\epsilon_{s_{i,1}}, \epsilon_{s_{i,2}}, \dots, \epsilon_{s_{i,f_i}})$, where $f_i \geq 2$ is the length of $\widehat{\epsilon}_i$. Hence $\sum_{i=1}^t f_i = s$. We know that $\epsilon_{s_{i,1}} = -1$ and $\epsilon_{s_{i,f_i}}=1$. Let 
\[\widetilde{\Gamma}^m_{\widehat{\epsilon}_i} = \bigcup_{1 \leq j \leq f_i-1} \Gamma^m_{\epsilon_{s_i,j}, \epsilon_{s_i,j+1}};\] it is a subgraph of $\Gamma^m_{\epsilon}$. Note that
\[\Gamma^m_{\widehat{\epsilon}_i} = \widetilde{\Gamma}^m_{\widehat{\epsilon}_i} \bigcup \Gamma^m_{\epsilon_{s_i,f_i}, \epsilon_{s_i,1}}.\]
If $\lambda_i \leq m$, then $\widetilde{\Gamma}^m_{\widehat{\epsilon}_i}$ contains a unique free linear digraph of $\Gamma^m_{\epsilon}$ of the form $x_{l_{i,1},s_{i,1}}, x_{l_{i,2},s_{i,2}}, \dots, x_{l_{i,f_i},s_{i,f_i}}$ by Lemma \ref{lemma:pdivbasic3}.

\begin{lemma} \label{lemma:lineardigraphtruncated}
Using the above notation, let $i \in \{1, \dots, t\}$ be such that $\lambda_i \leq m$. Then $\widetilde{\Gamma}^m_{\widehat{\epsilon}_i}$
contains exactly $m-\lambda_i$ linear digraphs of
$\widetilde{\Gamma}^m_{\widehat{\epsilon}_i}$ of length $f_i$ that are not free linear digraph of $\Gamma^m_{\epsilon}$. Moreover, if $\lambda_i < m$, then each of these $m-\lambda_i$ linear digraphs of $\widetilde{\Gamma}^m_{\widehat{\epsilon}_i}$ is of the form:
\begin{equation} \label{equation:lineardigraphtruncated}
x_{l_{i,1}-j,s_{i,1}}, x_{l_{i,2}-j,s_{i,2}}, \dots,
x_{l_{i,f_i}-j,s_{i,f_i}},
\end{equation}
where $j \in \{1, 2, \dots, m-\lambda_i\}$.
\end{lemma}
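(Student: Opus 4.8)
The plan is to produce every length-$f_i$ linear digraph of $\widetilde{\Gamma}^m_{\widehat{\epsilon}_i}$ as a downward vertical translate of the single free linear digraph attached to $\widehat{\epsilon}_i$ by Lemma~\ref{lemma:pdivbasic3}. Write $\mathcal{D}=\mathcal{D}_0=(x_{l_{i,1},s_{i,1}},\dots,x_{l_{i,f_i},s_{i,f_i}})$ for that free linear digraph, so that $l_{i,1}=l_{i,f_i}=m-1$ and $\min_{k}l_{i,k}=m-\lambda_i$, and for an integer $j\geq 0$ put $\mathcal{D}_j:=(x_{l_{i,1}-j,s_{i,1}},\dots,x_{l_{i,f_i}-j,s_{i,f_i}})$. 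First I would recall from the proof of Lemma~\ref{lemma:pdivbasic3} that for each $k\in I_{f_i-1}$ the block $\Gamma^m_{\epsilon_{s_{i,k}},\epsilon_{s_{i,k+1}}}$ is one of Cases~(1), (2), (8), (10) of Subsection~\ref{subsection:weighteddigraphs}, that it carries a source index $x$ to the target index $x+\mathrm{lv}(\epsilon_{s_{i,k}},\epsilon_{s_{i,k+1}})$ exactly when $x$ and $x+\mathrm{lv}(\epsilon_{s_{i,k}},\epsilon_{s_{i,k+1}})$ both lie in $\{0,\dots,m-1\}$ and $x$ is not marked zero, and that $l_{i,k+1}=l_{i,k}+\mathrm{lv}(\epsilon_{s_{i,k}},\epsilon_{s_{i,k+1}})$. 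Hence $\mathcal{D}_j$ automatically satisfies the same edge recursion, and the only point left to check is that each index $l_{i,k}-j$ is an admissible, non-zero source index of the block leaving column $s_{i,k}$.

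For this I would combine the formula and bounds for $l_{i,k}$ from Lemma~\ref{lemma:pdivbasic3} with the definition of $\lambda_i$ as the level of $\widehat{\epsilon}_i$. The relevant observations are: $l_{i,k}\geq m-\lambda_i$ for all $k$ and $l_{i,k}\leq m-2$ at every column that is the source of a $(1,\pm1)$-block; the source index $0$ is forbidden only at a $(-1,-1)$-block (Case~(8)), but a $(-1,-1)$-block never occurs at a column realizing the minimum value $m-\lambda_i$ (there the next entry is forced to be $1$ by maximality of $\lambda_i$), so $l_{i,k}\geq m-\lambda_i+1$ at every $(-1,-1)$-source column; and no vertex of $\mathcal{D}_0$ is marked zero, because the zero vertices occurring inside a free linear segment sit at the $0$-th slot of a $(-1,-1)$-source column or of a $(1,1)$-target column, which are exactly the columns where $\mathcal{D}_0$ has index at least $m-\lambda_i+1$. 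A short case check on the four patterns then shows that for $j\in\{0,1,\dots,m-\lambda_i\}$ every $l_{i,k}-j$ is admissible and non-zero, so $\mathcal{D}_j$ is a genuine path on $f_i$ vertices of $\widetilde{\Gamma}^m_{\widehat{\epsilon}_i}$ with no zero vertex; and since in $\widetilde{\Gamma}^m_{\widehat{\epsilon}_i}$ the first column $s_{i,1}$ has no incoming edges, the last column $s_{i,f_i}$ has no outgoing edges, and every non-zero vertex of an interior column has at most one incoming and at most one outgoing edge (each block restricts to an injective partial map of indices), $\mathcal{D}_j$ is a connected component, hence a linear digraph of length $f_i$.

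Next I would prove these are all of them. Since $\widetilde{\Gamma}^m_{\widehat{\epsilon}_i}$ is layered (edges go only from column $s_{i,k}$ to column $s_{i,k+1}$), each of its connected components is a directed path monotone in the columns; a component of length $f_i$ therefore meets all $f_i$ columns and has origin some vertex $x_{a,s_{i,1}}$ of the first column. Following the unique outgoing edges column by column, the resulting chain agrees index by index with $\mathcal{D}_{m-1-a}$, and it reaches column $s_{i,f_i}$ if and only if admissibility persists throughout, i.e.\ if and only if $m-1-a\leq m-\lambda_i$. Thus the length-$f_i$ linear digraphs of $\widetilde{\Gamma}^m_{\widehat{\epsilon}_i}$ are exactly $\mathcal{D}_0,\mathcal{D}_1,\dots,\mathcal{D}_{m-\lambda_i}$. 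Finally, to distinguish these inside the ambient graph $\Gamma^m_\epsilon$, I would use that the edge block of $\Gamma^m_\epsilon$ entering column $s_{i,1}$ from the preceding free linear segment — which is absent from $\widetilde{\Gamma}^m_{\widehat{\epsilon}_i}$ — is $\Gamma^m_{1,-1}$, a Case~(1) block whose targets in column $s_{i,1}$ are exactly $x_{0,s_{i,1}},\dots,x_{m-2,s_{i,1}}$. So the origin $x_{m-1-j,s_{i,1}}$ of $\mathcal{D}_j$ gains an incoming edge in $\Gamma^m_\epsilon$ precisely when $j\geq 1$: consequently $\mathcal{D}_0$ is still a connected component of $\Gamma^m_\epsilon$ (in agreement with Lemma~\ref{lemma:pdivbasic3}), while $\mathcal{D}_1,\dots,\mathcal{D}_{m-\lambda_i}$ each lie in a strictly larger component and hence are not free linear digraphs of $\Gamma^m_\epsilon$. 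This yields the count $m-\lambda_i$ and, when $\lambda_i<m$, the displayed form~\eqref{equation:lineardigraphtruncated}.

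The main obstacle I anticipate is organizational rather than conceptual: one must run the four patterns $(-1,-1)$, $(1,1)$, $(1,-1)$, $(-1,1)$ and their precise boundary behaviour (which source index is forbidden, which vertices are marked zero, the role of maximality of $\lambda_i$ at the lowest column) uniformly through the admissibility check, handle the degenerate case $\lambda_i=m$ where no translate survives and the final clause of the lemma is vacuous, and justify ``following the unique outgoing edge'' from the observation that each block of Subsection~\ref{subsection:weighteddigraphs} restricts to an injective partial map on indices. All of this is immediate from the explicit digraph descriptions already established, so the proof amounts to a careful translation of the picture of Lemma~\ref{lemma:pdivbasic3} downward.
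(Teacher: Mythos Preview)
Your proposal is correct and follows essentially the same approach as the paper: both invoke Lemma~\ref{lemma:pdivbasic3} to produce the unique free linear digraph $\mathcal{D}_0$, observe that every length-$f_i$ linear digraph of $\widetilde{\Gamma}^m_{\widehat{\epsilon}_i}$ must be a downward translate $\mathcal{D}_j$, and bound $j$ by $m-\lambda_i$ via the column realizing the minimum index $m-\lambda_i$. The paper's proof is very terse (essentially ``by the proof of Lemma~\ref{lemma:pdivbasic3}'' plus the minimum-index bound), whereas you explicitly verify admissibility of each translate across the four block types and explain why the $j\geq 1$ translates acquire an incoming edge from the preceding $(1,-1)$-block in $\Gamma^m_\epsilon$ --- details the paper leaves implicit.
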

\begin{proof}
If $\lambda_i \leq m$, then $\widetilde{\Gamma}^m_{\widehat{\epsilon}_i}$ contains a unique free linear digraph of the form $x_{l_{i,1},s_{i,1}}, x_{l_{i,2},s_{i,2}}, \dots, x_{l_{i,f_i},s_{i,f_i}}$ by Lemma \ref{lemma:pdivbasic3}. By the proof of Lemma \ref{lemma:pdivbasic3}, every linear digraph of $\widetilde{\Gamma}^m_{\widehat{\epsilon}_i}$ of length $f_i$ that is not a free linear digraph of $\Gamma^m_{\epsilon}$ must have the form:
\[x_{l_{i,1}-j,s_{i,1}}, x_{l_{i,2}-j,s_{i,2}}, \dots, x_{l_{i,f_i}-j,s_{i,f_i}},\]
for some positive integer $j \geq 1$. As there exists a positive integer $i_0$ such that $l_{i,i_0} = m-\lambda_i$, we must have $1 \leq j \leq m-\lambda_i$ in order for \eqref{equation:lineardigraphtruncated} to be a linear digraph of $\widetilde{\Gamma}^m_{\widehat{\epsilon}_i}$. This completes the proof of the lemma.
\end{proof}

\begin{theorem} \label{theorem:connectedcomponentcal}
Let $\epsilon = (\epsilon_1, \epsilon_2, \dots, \epsilon_s)$ be a circular sequence of level $\lambda_{\epsilon} \geq 0$ such that for each $i \in I_s$, we have $\epsilon_i \in \{1, 0, -1\}$. Then 
\[c(\Gamma^m_{\epsilon}) = \mathrm{max}\{0, m-\lambda_{\epsilon}\}.\]
\end{theorem}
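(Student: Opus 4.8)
The plan is to split into three regimes according to the size of $\lambda_\epsilon$ relative to $m$, after a preliminary normalization. First I would invoke the Second Reduction (Theorem \ref{secondreduction}) to delete every zero entry of $\epsilon$: deleting zeros changes neither $c(\Gamma^m_\epsilon)$ nor any partial sum $\sum_{j=a}^{b}\epsilon_j$, hence changes neither side of the claimed identity, so I may assume $\epsilon_i\in\{1,-1\}$ for all $i$, while $\sum_{i=1}^{s}\epsilon_i=0$ is part of the hypothesis ``$\epsilon$ is a circular sequence of level $\lambda_\epsilon$''. The degenerate case $\lambda_\epsilon=0$ forces every $\epsilon_i=0$ (take a length-one interval in Definition \ref{definition:circularsequencelevel}(2)), and there $c(\Gamma^m_\epsilon)=m=\max\{0,m-0\}$ by the observation at the beginning of Section \ref{section:dm}; so I set this case aside.

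Next I would treat $\lambda_\epsilon>m$, where the assertion is $c(\Gamma^m_\epsilon)=0=\max\{0,m-\lambda_\epsilon\}$. By Lemma \ref{lemma:breakupfreelinear}, $\epsilon$ contains a free linear segment $\epsilon_{t_1},\dots,\epsilon_{t_n}$ of level $\lambda_\epsilon$; its cumulative sums $\sum_{j=1}^{i}\epsilon_{t_j}$ start at $-1$, remain in $[-\lambda_\epsilon,-1]$ for $i<n$, and (taking $\pm 1$ steps down to $-\lambda_\epsilon\le -(m+1)$) pass through the value $-m-1$, first attained at some position $i^{*}$ with $\epsilon_{t_{i^{*}}}=-1$. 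A circular digraph of $\Gamma^m_\epsilon$ meets each column exactly once (Proposition \ref{proposition:circularzero}) and, restricted to the consecutive columns $t_1,\dots,t_n$, is a path; the index recursion $l_{i+1}-l_i=\frac{1}{2}(\epsilon_{t_i}+\epsilon_{t_{i+1}})$ underlying Lemma \ref{lemma:pdivbasic3} then gives $l_{i^{*}}=l_{t_1}+(-m-1)+1=l_{t_1}-m\le -1$ (since $l_{t_1}\le m-1$), contradicting $l_{i^{*}}\in\{0,\dots,m-1\}$; equivalently one may quote Lemma \ref{lemma:freelineartoolow} directly. Hence $\Gamma^m_\epsilon$ has no circular digraph.

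The substantive case is $1\le\lambda_\epsilon\le m$, where I must exhibit a bijection between the circular digraphs of $\Gamma^m_\epsilon$ and the set $\{1,\dots,m-\lambda_\epsilon\}$. Write $\epsilon=(\widehat\epsilon_1,\dots,\widehat\epsilon_t)$ as a concatenation of free linear segments $\widehat\epsilon_i$ of levels $\lambda_i$ with $1\le\lambda_i\le\lambda_\epsilon$ and $\max_i\lambda_i=\lambda_\epsilon$ (Lemma \ref{lemma:breakupfreelinear}). Over each $\widehat\epsilon_i$ the subgraph $\widetilde\Gamma^m_{\widehat\epsilon_i}$ carries the unique free linear digraph of Lemma \ref{lemma:pdivbasic3} (endpoints at index $m-1$, lowest vertex at index $m-\lambda_i$) together with, by Lemma \ref{lemma:lineardigraphtruncated}, the $m-\lambda_i$ further length-$f_i$ linear digraphs gotten by shifting all indices down by $j\in\{1,\dots,m-\lambda_i\}$. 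Since the junction between the end of $\widehat\epsilon_i$ and the start of $\widehat\epsilon_{i+1}$ is a $(1,-1)$ transition (Case (1) of Subsection \ref{subsection:weighteddigraphs}), whose only edges are $x_{a,\cdot}\to x_{a,\cdot}$ with $a\le m-2$, I would argue: (i) any circular digraph $C$, restricted to the columns of $\widehat\epsilon_i$, is a length-$f_i$ linear digraph of $\widetilde\Gamma^m_{\widehat\epsilon_i}$ — since every such vertex lies on a directed cycle, it has degree $2$ in $\Gamma^m_\epsilon$, hence is no zero vertex and its two edges are the internal ones — so $C$ selects a shift $j_i\in\{0,1,\dots,m-\lambda_i\}$ on $\widehat\epsilon_i$; (ii) matching the (equal) index $m-1-j_i=m-1-j_{i+1}$ carried across each junction forces $j_1=\dots=j_t=:j$; (iii) the junction edge exists exactly when $m-1-j\le m-2$, i.e. $j\ge1$, and staying within $\{0,\dots,m-1\}$ over $\widehat\epsilon_i$ requires $j\le m-\lambda_i$, so $j\in\{1,\dots,m-\lambda_\epsilon\}$; (iv) conversely, for each such $j$ the union of the shift-$j$ pieces over all segments closes into a genuine circular digraph, with no zero vertices (again because each of its vertices has degree $2$). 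This yields $c(\Gamma^m_\epsilon)=m-\lambda_\epsilon=\max\{0,m-\lambda_\epsilon\}$, completing all three cases.

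I expect the main obstacle to be steps (ii)--(iii): carefully verifying that the shifts of the linear pieces over consecutive free linear segments are forced to coincide, and that the common shift ranges over precisely $\{1,\dots,m-\lambda_\epsilon\}$ and no larger or smaller set — this is where the exact edge pattern of the Case (1) transition digraph and the ``lowest vertex at index $m-\lambda_i$'' clause of Lemma \ref{lemma:pdivbasic3} are simultaneously in play, and where an off-by-one error would corrupt the final count. The $\lambda_\epsilon>m$ case is conceptually lighter but also requires attention, since Lemma \ref{lemma:freelineartoolow} is phrased for linear digraphs and must be transported to circular digraphs through Proposition \ref{proposition:circularzero}.
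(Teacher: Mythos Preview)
Your proposal is correct and follows essentially the same route as the paper: reduce to $\epsilon_i\in\{\pm1\}$ via the Second Reduction, dispose of $\lambda_\epsilon=0$ directly, use the decomposition of Lemma \ref{lemma:breakupfreelinear} together with Lemma \ref{lemma:freelineartoolow} for $\lambda_\epsilon>m$, and in the remaining range glue the shifted linear pieces of Lemma \ref{lemma:lineardigraphtruncated} across the $(1,-1)$ junctions of Case (1). The only organizational difference is that you fold the case $\lambda_\epsilon=m$ into the main argument (the allowed shift set $\{1,\dots,m-\lambda_\epsilon\}$ is then empty), whereas the paper treats it as a separate one-line case; your steps (ii)--(iii) also make explicit the ``equal shift'' matching across junctions that the paper leaves implicit in its construction of the $m-\lambda_\epsilon$ circular digraphs.
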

\begin{proof}
If $\lambda_{\epsilon} = 0$, then $\epsilon = (0,\dots,0)$, and it is easy to see that 
\[c(\Gamma^m_{\epsilon}) = m = \mathrm{max}\{0, m-\lambda_{\epsilon}\}\] 
by the construction of $\Gamma_{\epsilon}^m$ (see Case (2) of Subsection \ref{subsection:weighteddigraphs}).

If $\lambda_{\epsilon} > 0$, then we can assume that $\epsilon_i \in \{1,-1\}$ for every $i \in I_s$ by Theorem \ref{secondreduction}. By Lemma \ref{lemma:breakupfreelinear}, we can assume that $\epsilon = (\widehat{\epsilon}_1, \widehat{\epsilon}_2, \dots, \widehat{\epsilon}_t)$, where $\widehat{\epsilon}_i$, $\lambda_i$, $f_i$ and $\Gamma^m_{\widehat{\epsilon}_i}$ are defined as above. For each $i \in I_t$, the intersection of $\Gamma^m_{\widehat{\epsilon}_i}$ and every circular digraph of $\Gamma_{\epsilon}^m$ is a linear digraph of $\Gamma^m_{\widehat{\epsilon}_i}$ (of length $f_i$) but not a free linear digraph of $\Gamma^m_{\epsilon}$ (i.e., as in the form \eqref{equation:lineardigraphtruncated}).

If $\lambda_{\epsilon} > m$ and thus $\lambda_{i_0} > m$ for some $i_0 \in I_t$, then $\Gamma^m_{\widehat{\epsilon}_{i_0}}$ does not contain any linear digraph of $\Gamma^m_{\widehat{\epsilon}_{i_0}}$ of length $f_{i_0}$ by Lemma \ref{lemma:freelineartoolow}. As a result, we have $c(\Gamma^m_{\epsilon}) = 0 = \mathrm{max}\{0, m-\lambda_{\epsilon}\}.$

If $\lambda_{\epsilon} = m$ and thus $\lambda_{i_0} = m$ for some $i_0 \in I_t$, then $\Gamma^m_{\widehat{\epsilon}_{i_0}}$ contains exactly one linear digraph of $\Gamma^m_{\widehat{\epsilon}_{i_0}}$ of length $f_{i_0}$, which is also a free linear digraph of $\Gamma^m_{\epsilon}$. As a result, we have $c(\Gamma^m_{\epsilon}) = 0 = \mathrm{max}\{0, m-\lambda_{\epsilon}\}.$

If $0<\lambda_{\epsilon} < m$, then for each $i \in I_t$, there are $m-\lambda_i$ linear digraphs of $\widetilde{\Gamma}^m_{\widehat{\epsilon}_i}$ of the form \eqref{equation:lineardigraphtruncated} by Lemma \ref{lemma:lineardigraphtruncated}. Hence \[c(\Gamma^m_{\epsilon}) \leq \textrm{min}\{ m -\lambda_i \mid i=1, 2, \dots, t\} = m-\lambda_{\epsilon}.\]

Given a linear digraph of the form \eqref{equation:lineardigraphtruncated}, as $\epsilon_{s_{i-1,f_{i-1}}}=1$ and $\epsilon_{s_{i,1}}=-1$, we know that there exists an edge of weight $0$ in $\Gamma^m_{\epsilon}$ whose source is $ x_{m-1-j,s_{i-1,f_{i-1}}}$ and target is $x_{m-1-j,s_{i,1}}$ for all $1 \leq j \leq m-2$ (see Case (1) of Subsection \ref{subsection:weighteddigraphs}). As $\epsilon_{s_{i,f_i}} = 1$ and $\epsilon_{s_{i+1,1}}=-1$, we know that there exists an edge of weight $0$ in  $\Gamma^m_{\epsilon}$ whose source is $x_{m-1-j,s_{i,f_i}}$ and target is $x_{m-1-j,s_{i+1,f_1}}$ for all $1 \leq j \leq m-2$ (see Case (1) of Subsection \ref{subsection:weighteddigraphs}). Thus for each $1 \leq j \leq m - \lambda_{\epsilon}$, we have a circular digraph of $\Gamma^m_{\epsilon}$:
\[x_{l_{1,1}-j,s_{1,1}}, x_{l_{1,2}-j,s_{1,2}}, \dots, x_{l_{1,f_1}-j,s_{1,f_1}} \dots, x_{l_{t,1}-j,s_{t,1}}, x_{l_{t,2}-j,s_{t,2}}, \dots, x_{l_{t,f_t}-j,s_{t,f_t}}.\]
Hence we also have
\[c(\Gamma^m_{\epsilon}) \geq m-\lambda_{\epsilon} = \mathrm{max}\{0, m-\lambda_{\epsilon}\}.\]
We conlude that in all cases we have
\[c(\Gamma^m_{\epsilon}) = \mathrm{max}\{0, m-\lambda_{\epsilon}\}. \qedhere\]
\end{proof}

\subsection{Main Result}
Using the results in Section \ref{tworeductions} and the previous two subsections of this section, we are ready to state and prove the main result of this paper. We first introduce some notation.

Let $\mathcal{M}_{\pi}$ be an $F$-cyclic $F$-crystal of rank $r$. Recall that $\mathcal{B}_{\pi \times \pi}$ is the set of orbits of $\pi \times \pi$ on $I^2_r$. For every $\mathcal{O} \in \mathcal{B}_{\pi \times \pi}$, 
\begin{enumerate}
	\item if $\epsilon_{\mathcal{O}} = (0, \dots, 0)$ is a circular sequence whose entries are all $0$, then let $\widetilde{\epsilon}_{\mathcal{O}} = \epsilon_{\mathcal{O}}$;
	\item if $\epsilon_{\mathcal{O}} = (\epsilon_1, \epsilon_2, \dots, \epsilon_s)$ is a circular sequence whose entries are not all $0$, then let $\widetilde{\epsilon}_{\mathcal{O}} = (\tilde{\epsilon}_1, \tilde{\epsilon}_2, \dots, \tilde{\epsilon}_t)$ be the circular sequence of integers obtained from $\epsilon_{\mathcal{O}}$ by first removing all zeroes, and then replacing each integer $\epsilon_i \neq 0$ in $\epsilon_{\mathcal{O}}$ with $|\epsilon_i|$ copies of $\epsilon_i/|\epsilon_i|$ (we have $\tilde{\epsilon}_i \in \{-1,1\}$ for all $i \in \{1, 2, \dots, t\}$).
\end{enumerate}
For every nonnegative integer $\lambda$, let $\mathcal{C}_{\pi}(\lambda) \subset \mathcal{B}_{\pi \times \pi}$ be the set orbits $\mathcal{O}$ such that $\widetilde{\epsilon}_{\mathcal{O}}$ is a circular sequence of level $\lambda$. We recall that for $\lambda \geq 1$, $a_{\lambda}(\epsilon_{\mathcal{O}})$ is the number of free linear segments of level $\lambda$ in $\epsilon_{\mathcal{O}}$.

\begin{theorem} \label{theorem:main}
Let $\mathcal{M}_{\pi}$ be an $F$-cyclic $F$-crystal over $k$. Using the above notation, for every integer $m \geq 1$, the dimension of $\mathbf{Aut}_m(\mathcal{M}_{\pi})$ is equal to 
\[\sum_{\mathcal{O} \in \mathcal{B}_{\pi \times \pi}} \sum_{\lambda=1}^m a_{\lambda}(\widetilde{\epsilon}_{\mathcal{O}}),\]
and the number of connected components of $\mathbf{End}_m(\mathcal{M}_{\pi})$ is equal to $p^b$, where
\[b = \sum_{\lambda = 0}^{m-1} \sum_{\mathcal{O} \in \mathcal{C}_{\pi}(\lambda)} (m-\lambda)|\mathcal{O}|.\qedhere\]
\end{theorem}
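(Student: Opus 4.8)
The plan is to assemble the statement from the structural results already in place, since the substantial combinatorial work — the two reductions and the explicit counts of free linear and circular digraphs — is already done. By Subsection~\ref{subsection:basicsetup}, $\mathbf{Aut}_m(\mathcal{M}_{\pi})$ is open in $\mathbf{End}_m(\mathcal{M}_{\pi})$, so $\dim\mathbf{Aut}_m(\mathcal{M}_{\pi})=\dim\mathbf{End}_m(\mathcal{M}_{\pi})$; and Proposition~\ref{proposition:firstcalc} identifies this common dimension with $\ell(\Gamma_{\mathbf{End}_m(\mathcal{M}_{\pi})})$ and the number of connected components of $\mathbf{End}_m(\mathcal{M}_{\pi})$ with $p^{w(\Gamma_{\mathbf{End}_m(\mathcal{M}_{\pi})})}$. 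Since $\Gamma_{\mathbf{End}_m(\mathcal{M}_{\pi})}=\bigsqcup_{\mathcal{O}\in\mathcal{B}_{\pi\times\pi}}\Gamma^m_{\epsilon_{\mathcal{O}}}$ is a disjoint union, the invariants $\ell$ and $w$ add over the orbits, so it suffices to evaluate $\ell(\Gamma^m_{\epsilon_{\mathcal{O}}})$ and $w(\Gamma^m_{\epsilon_{\mathcal{O}}})$ for a single orbit $\mathcal{O}$, remembering that $|\epsilon_{\mathcal{O}}|=|\mathcal{O}|$.

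Fix $\mathcal{O}$ and consider first the dimension. If $\epsilon_{\mathcal{O}}=(0,\dots,0)$, then $\widetilde{\epsilon}_{\mathcal{O}}=\epsilon_{\mathcal{O}}$ admits no free linear segment and $\Gamma^m_{\epsilon_{\mathcal{O}}}$ has no free linear digraph by the opening of Section~\ref{section:dm}, so both sides vanish. Otherwise I would apply Theorems~\ref{firstreduction} and~\ref{secondreduction} repeatedly — first replacing each $\epsilon_i\neq 0$ by $|\epsilon_i|$ consecutive copies of $\epsilon_i/|\epsilon_i|$, as in the observation following Theorem~\ref{firstreduction}, then deleting all zeroes — to conclude $\ell(\Gamma^m_{\epsilon_{\mathcal{O}}})=\ell(\Gamma^m_{\widetilde{\epsilon}_{\mathcal{O}}})$, and then invoke Theorem~\ref{theorem:freelinearcal} to get $\ell(\Gamma^m_{\widetilde{\epsilon}_{\mathcal{O}}})=\sum_{\lambda=1}^m a_{\lambda}(\widetilde{\epsilon}_{\mathcal{O}})$. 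Summing over all $\mathcal{O}$ yields the asserted formula for $\dim\mathbf{Aut}_m(\mathcal{M}_{\pi})$.

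Next, the connected components. By Corollary~\ref{corollary:connectedcomponentfirstcalc}, $w(\Gamma^m_{\epsilon_{\mathcal{O}}})=c(\Gamma^m_{\epsilon_{\mathcal{O}}})\cdot|\mathcal{O}|$, and the same reduction argument — which also preserves the number of circular digraphs — gives $c(\Gamma^m_{\epsilon_{\mathcal{O}}})=c(\Gamma^m_{\widetilde{\epsilon}_{\mathcal{O}}})$ (using the opening of Section~\ref{section:dm} again in the all-zero case). I would then split according to whether $\sum_i(\epsilon_{\mathcal{O}})_i$ is zero. If it is nonzero, the same holds for $\widetilde{\epsilon}_{\mathcal{O}}$ — deleting zeroes and expanding entries preserves this sum — so Proposition~\ref{proposition:circularzero} gives $c(\Gamma^m_{\epsilon_{\mathcal{O}}})=0$ and $\mathcal{O}$ lies in no $\mathcal{C}_{\pi}(\lambda)$. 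If it is zero, then $\widetilde{\epsilon}_{\mathcal{O}}$ is a circular sequence of a unique level $\lambda=\lambda_{\widetilde{\epsilon}_{\mathcal{O}}}\geq 0$, that is $\mathcal{O}\in\mathcal{C}_{\pi}(\lambda)$, and Theorem~\ref{theorem:connectedcomponentcal} gives $c(\Gamma^m_{\epsilon_{\mathcal{O}}})=\max\{0,m-\lambda\}$. Hence
\[
w(\Gamma_{\mathbf{End}_m(\mathcal{M}_{\pi})})=\sum_{\lambda\geq 0}\ \sum_{\mathcal{O}\in\mathcal{C}_{\pi}(\lambda)}\max\{0,m-\lambda\}\,|\mathcal{O}|=\sum_{\lambda=0}^{m-1}\ \sum_{\mathcal{O}\in\mathcal{C}_{\pi}(\lambda)}(m-\lambda)|\mathcal{O}|=b,
\]
the terms with $\lambda\geq m$ dropping out, so the number of connected components of $\mathbf{End}_m(\mathcal{M}_{\pi})$ is $p^b$.

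The argument is organizational rather than hard; the points deserving attention are the degenerate orbits — all-zero $\epsilon_{\mathcal{O}}$, and length-one $\epsilon_{\mathcal{O}}$ — where one appeals to the opening of Section~\ref{section:dm} or to Remark~\ref{remark:digraph1} directly instead of to the reduction theorems, and the passage from the count $c$ of circular digraphs produced by Theorem~\ref{theorem:connectedcomponentcal} to the edge count $w$ that enters the exponent $b$, which relies on Corollary~\ref{corollary:connectedcomponentfirstcalc} together with the fact from Proposition~\ref{proposition:circularzero} that every circular digraph in $\Gamma^m_{\epsilon_{\mathcal{O}}}$ has exactly $|\mathcal{O}|$ edges. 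I do not expect any genuine obstacle here, the real work having been carried out in Sections~\ref{tworeductions} and~\ref{section:dm}.
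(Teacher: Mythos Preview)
Your proposal is correct and follows essentially the same approach as the paper: reduce to $\ell$ and $w$ via Proposition~\ref{proposition:firstcalc}, decompose over orbits, pass from $\epsilon_{\mathcal{O}}$ to $\widetilde{\epsilon}_{\mathcal{O}}$ using Theorems~\ref{firstreduction} and~\ref{secondreduction}, and finish with Theorem~\ref{theorem:freelinearcal}, Corollary~\ref{corollary:connectedcomponentfirstcalc}, and Theorem~\ref{theorem:connectedcomponentcal}. You are in fact slightly more explicit than the paper about the degenerate cases (all-zero $\epsilon_{\mathcal{O}}$, orbits with nonzero sum) and about why terms with $\lambda\geq m$ drop out, but the logical structure is identical.
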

\begin{proof}
Let $\mathbf{E} := \mathbf{End}_m(\mathcal{M}_{\pi})$. We have 
\begin{align*}
\gamma_{\mathcal{M}_{\pi}}(m) & = \ell(\Gamma_{\mathbf{E}}) & \textrm{by Proposition \ref{proposition:firstcalc}} \\
&= \sum_{\mathcal{O} \in \mathcal{B}_{\pi \times \pi}} \ell(\Gamma^m_{\epsilon_{\mathcal{O}}}) & \textrm{by } \Gamma_{\mathbf{E}} = \bigsqcup_{O \in \mathcal{B}_{\pi \times \pi}} \Gamma^m_{\epsilon_{\mathcal{O}}} \\
&= \sum_{\mathcal{O} \in \mathcal{B}_{\pi \times \pi}} \ell(\Gamma^m_{\widetilde{\epsilon}_{\mathcal{O}}}) & \textrm{by Theorems \ref{firstreduction} and \ref{secondreduction}} \\
&= \sum_{\mathcal{O} \in \mathcal{B}_{\pi \times \pi}} \sum_{\lambda=1}^m a_{\lambda}(\widetilde{\epsilon}_{\mathcal{O}}) & \textrm{by Theorem \ref{theorem:freelinearcal},} \\
\end{align*}
and
\begin{align*}
b & = w(\Gamma_{\mathbf{E}}) & \textrm{by Proposition \ref{proposition:firstcalc}} \\
&= \sum_{\mathcal{O} \in \mathcal{B}_{\pi \times \pi}} w(\Gamma^m_{\epsilon_{\mathcal{O}}}) & \textrm{by } \Gamma_{\mathbf{E}} = \bigsqcup_{O \in \mathcal{B}_{\pi \times \pi}} \Gamma^m_{\epsilon_{\mathcal{O}}} \\
&= \sum_{\mathcal{O} \in \mathcal{B}_{\pi \times \pi}} c(\Gamma^m_{\epsilon_{\mathcal{O}}}) \times |\mathcal{O}| & \textrm{by Corollary \ref{corollary:connectedcomponentfirstcalc}} \\
&= \sum_{\mathcal{O} \in \mathcal{B}_{\pi \times \pi}} c(\Gamma^m_{\widetilde{\epsilon}_{\mathcal{O}}}) \times |\mathcal{O}| & \textrm{by Theorems \ref{firstreduction} and \ref{secondreduction}} \\
&= \sum_{\lambda=0}^{m-1} \sum_{\mathcal{O} \in \mathcal{C}_{\pi}(\lambda)} (m-\lambda)|\mathcal{O}| & \textrm{by Theorem \ref{theorem:connectedcomponentcal}.} & \qedhere
\end{align*}
\end{proof}

\section{Strict Monotonicity of $\mathcal{S}_{\mathcal{M}}^*(1)$}

We begin this section by proving that:

\begin{theorem} \label{theorem:strictinequalitycircular}
Let $\mathcal{M}$ be a nonordinary $F$-circular Dieudonn\'e module of rank $r \geq 2$ (thus $n_{\mathcal{M}} \geq 1$). Then the sequence $\mathcal{S}^*_{\mathcal{M}}(1)$ is strictly decreasing, i.e., we have
\[\gamma_{\mathcal{M}}(1) - \gamma_{\mathcal{M}}(0) > \gamma_{\mathcal{M}}(2) - \gamma_{\mathcal{M}}(1) > \cdots > \gamma_{\mathcal{M}}(n_{\mathcal{M}}) - \gamma_{\mathcal{M}}(n_{\mathcal{M}}-1)>0.\]
\end{theorem}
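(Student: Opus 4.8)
The plan is to translate the statement through the combinatorial formula of Theorem~\ref{theorem:main} and then exploit that $\pi$ is a single $r$-cycle. Write $A_m:=\sum_{\mathcal{O}\in\mathcal{B}_{\pi\times\pi}}a_m(\widetilde{\epsilon}_{\mathcal{O}})$, so that $\gamma_{\mathcal{M}}(m)-\gamma_{\mathcal{M}}(m-1)=A_m$ for all $m\geq 1$ by Theorem~\ref{theorem:main}. Summing Corollary~\ref{corollary:freelinearsegmentcontains} over orbits gives $A_1\geq A_2\geq\cdots$, and since $\mathcal{M}$ is a Dieudonn\'e module, Theorem~\ref{theorem:vasiugabberbasic} shows $A_m>0$ exactly for $1\leq m\leq n_{\mathcal{M}}$; in particular $n_{\mathcal{M}}=\max\{m:A_m>0\}$. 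Thus it suffices to exclude the possibility $A_m=A_{m+1}$ for some $1\leq m\leq n_{\mathcal{M}}-1$. Because the summands $a_m(\widetilde{\epsilon}_{\mathcal{O}})-a_{m+1}(\widetilde{\epsilon}_{\mathcal{O}})$ are nonnegative and add up to $A_m-A_{m+1}$, such an equality would force $a_m(\widetilde{\epsilon}_{\mathcal{O}})=a_{m+1}(\widetilde{\epsilon}_{\mathcal{O}})$ for \emph{every} orbit $\mathcal{O}$ at once, and this is what I aim to contradict.

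Identify $I_r$ with $\mathbb{Z}/r$ so that $\pi(i)=i+1$. Then $\pi\times\pi$ has exactly the $r$ orbits $\mathcal{O}_k=\{(i,i+k):i\in\mathbb{Z}/r\}$ ($k\in\mathbb{Z}/r$), each of size $r$; here $\mathcal{O}_0$ is the diagonal (with $\epsilon_{\mathcal{O}_0}=0$), and for $k\neq 0$ the $t$-th entry of $\epsilon_{\mathcal{O}_k}$ is $e_t-e_{t+k}\in\{0,\pm1\}$, so $\widetilde{\epsilon}_{\mathcal{O}_k}$ is obtained by simply deleting the zeros and is a circular $\{\pm1\}$-sequence of sum $0$. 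Put $\rho_k:=\lambda_{\widetilde{\epsilon}_{\mathcal{O}_k}}$, the level in the sense of Definition~\ref{definition:circularsequencelevel}. Directly from the description of the level as a maximal absolute arc-sum, $\rho_k=\max_t S_t-\min_t S_t$ where $S_t:=\sum_{j=1}^t(e_j-e_{j+k})$, and writing $\widehat{E}$ for the partial-sum function of the Hodge slopes extended by $\widehat{E}(t+r)=\widehat{E}(t)+d$ (with $\widehat{E}(0)=0$), telescoping gives $S_t=\widehat{E}(t)-\widehat{E}(t+k)+\widehat{E}(k)$; hence $\rho_k$ is the range over $t\in\mathbb{Z}/r$ of $f_k(t):=\widehat{E}(t)-\widehat{E}(t+k)$. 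I claim two properties of $k\mapsto\rho_k$. First, $|\rho_k-\rho_{k-1}|\leq 1$: indeed $f_k(t)-f_{k-1}(t)=\widehat{E}(t+k-1)-\widehat{E}(t+k)=-e_{t+k}\in\{0,-1\}$, a pair of consecutive integers, so $\mathrm{range}(f_k)=\mathrm{range}\bigl(f_{k-1}+(f_k-f_{k-1})\bigr)\leq\mathrm{range}(f_{k-1})+1$ and, symmetrically, $\mathrm{range}(f_{k-1})\leq\mathrm{range}(f_k)+1$. Second, $n_{\mathcal{M}}=\max_k\rho_k$, and in fact $a_\lambda(\widetilde{\epsilon}_{\mathcal{O}_k})>0\iff \lambda\leq\rho_k$: by Lemma~\ref{lemma:breakupfreelinear} the sequence $\widetilde{\epsilon}_{\mathcal{O}_k}$ has a free linear segment of level $\rho_k$, hence by Proposition~\ref{proposition:freelinearsegmentcontains} one of every level $1,\dots,\rho_k$, while Definition~\ref{definition:freelinearsegment}(4) together with $\lambda_{\widetilde{\epsilon}_{\mathcal{O}_k}}=\rho_k$ rules out any of level $>\rho_k$; summing this equivalence over $k$ gives $A_\lambda>0\iff\lambda\leq\max_k\rho_k$, and comparing with $n_{\mathcal{M}}=\max\{\lambda:A_\lambda>0\}$ yields the claim.

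To conclude, assume $A_m=A_{m+1}$ with $1\leq m\leq n_{\mathcal{M}}-1$. By the second property, $k\mapsto\rho_k$ starts at $\rho_0=0$ and somewhere attains the value $n_{\mathcal{M}}\geq m+1$, so by the first property and the discrete intermediate value theorem there is $k^{*}$ with $\rho_{k^{*}}=m$. Then $a_m(\widetilde{\epsilon}_{\mathcal{O}_{k^{*}}})>0$ while $a_{m+1}(\widetilde{\epsilon}_{\mathcal{O}_{k^{*}}})=0$, contradicting $a_m(\widetilde{\epsilon}_{\mathcal{O}_{k^{*}}})=a_{m+1}(\widetilde{\epsilon}_{\mathcal{O}_{k^{*}}})$. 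Hence $A_1>A_2>\cdots>A_{n_{\mathcal{M}}}$, and $A_{n_{\mathcal{M}}}>0$ by the definition of $n_{\mathcal{M}}$; since $A_m=\gamma_{\mathcal{M}}(m)-\gamma_{\mathcal{M}}(m-1)$, this is exactly the asserted chain of strict inequalities. The heart of the argument — and the only place the $F$-circular hypothesis is genuinely used (cf.\ Example~\ref{example:counterexamplefcrystal}, where it fails) — is the ``continuity'' estimate $|\rho_k-\rho_{k-1}|\leq1$ combined with $n_{\mathcal{M}}=\max_k\rho_k$; everything else is routine bookkeeping with Theorem~\ref{theorem:main} and the results of Section~\ref{section:dm}.
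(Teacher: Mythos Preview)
Your proof is correct and follows essentially the same route as the paper: both reduce, via Theorem~\ref{theorem:main} and Corollary~\ref{corollary:freelinearsegmentcontains}, to finding an orbit $\mathcal{O}_{k^*}$ carrying a free linear segment of level $m$ but none of level $m+1$, and both locate it through the key observation that the level $\rho_k$ of $\epsilon_{\mathcal{O}_k}$ changes by at most $1$ when $k$ increases by $1$. The paper phrases this step as the implication ``$\epsilon_{\mathcal{O}_l}$ has no level-$(n+1)$ segment $\Rightarrow$ $\epsilon_{\mathcal{O}_{l+1}}$ has no level-$(n+2)$ segment'' and then takes the minimal $l_0$ with a level-$(n+1)$ segment, which is exactly your discrete intermediate value argument in contrapositive form.
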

\begin{proof}
As $\mathcal{M}$ is $F$-circular, there exists an $r$-cycle permutation $\pi$ on $I_r$ such that $\mathcal{M} \cong \mathcal{M}_{\pi}$. As $\mathcal{M}$ is a Dieudonn\'e module, for each $\mathcal{O} \in \mathcal{B}_{\pi \times \pi}$ we have $a_{\lambda}(\widetilde{\epsilon}_{\mathcal{O}}) = a_{\lambda}(\epsilon_{\mathcal{O}})$ by Remark \ref{remark:0dontmatterina}. 

By Theorem \ref{theorem:main}, for every $0 < n <n_{\mathcal{M}}$ we have
\[\gamma_{\mathcal{M}}(n+1) - \gamma_{\mathcal{M}}(n) = \sum_{\mathcal{O} \in \mathcal{B}_{\pi \times \pi}} \sum_{\lambda=1}^{n+1} a_{\lambda}({\epsilon}_{\mathcal{O}}) - \sum_{\mathcal{O} \in \mathcal{B}_{\pi \times \pi}} \sum_{\lambda=1}^{n} a_{\lambda}(\epsilon_{\mathcal{O}}) = \sum_{\mathcal{O} \in \mathcal{B}_{\pi \times \pi}} a_{n+1}({\epsilon}_{\mathcal{O}}).\]
Similarly, we also have
\[\gamma_{\mathcal{M}}(1) - \gamma_{\mathcal{M}}(0) = \sum_{\mathcal{O} \in \mathcal{B}_{\pi \times \pi}} a_{1}({\epsilon}_{\mathcal{O}}) - 0 = \sum_{\mathcal{O} \in \mathcal{B}_{\pi \times \pi}} a_{1}({\epsilon}_{\mathcal{O}}).\]
Thus it suffices to prove that for all $0 \leq n < n_{\mathcal{M}}-1$ we have
\[\sum_{\mathcal{O} \in \mathcal{B}_{\pi \times \pi}} a_{n+2}({\epsilon}_{\mathcal{O}}) < \sum_{\mathcal{O} \in \mathcal{B}_{\pi \times \pi}} a_{n+1}({\epsilon}_{\mathcal{O}}).\]
By Corollary \ref{corollary:freelinearsegmentcontains}, we know that $a_{n+2}({\epsilon}_{\mathcal{O}}) \leq a_{n+1}({\epsilon}_{\mathcal{O}})$. Therefore it is enough to show that there exists $\mathcal{O} \in \mathcal{B}_{\pi \times \pi}$ such that $a_{n+2}({\epsilon}_{\mathcal{O}}) < a_{n+1}({\epsilon}_{\mathcal{O}})$.

Let $(e_1, e_2, \dots, e_{r})$ be the sequence of Hodge slopes of $\mathcal{M}$. We have $e_1, e_2, \dots, e_r \in \{0,1\}$. Let $\mathcal{O}_1, \mathcal{O}_2, \dots, \mathcal{O}_r$ be the orbits of $\pi \times \pi$ on $I_r^2$.  We know that the corresponding $\epsilon_{\mathcal{O}_i}$ sequences are
\begin{align*}
\epsilon_{\mathcal{O}_1} &= (e_1-e_1, e_2-e_2, \dots, e_r-e_r)=(0,0, \dots, 0),\\
\epsilon_{\mathcal{O}_2} &= (e_1-e_2, e_2-e_3, \dots, e_r-e_1),\\
\epsilon_{\mathcal{O}_3} &= (e_1-e_3, e_2-e_4, \dots, e_r-e_2),\\
\vdots \\
\epsilon_{\mathcal{O}_r} &= (e_1-e_r, e_2-e_1, \dots, e_r-e_{r-1}).
\end{align*}
We claim that if $\epsilon_{\mathcal{O}_l}$ does not contain a free linear segment of level $n+1$, then $\epsilon_{\mathcal{O}_{l+1}}$ does not contains a free linear segment of level $n+2$. Indeed, if $\epsilon_{\mathcal{O}_l}$ does not contain a free linear segment of level $n+1$, then it means that for all positive integers $1 \leq u \leq r$ and $1 \leq w \leq r$ we have
\[ \sum_{t=0}^w(e_{u+t}-e_{u+l-1+t}) > -(n+1). \]
Here the subscripts of $e_i$'s are taken modulo $r$. Hence
\begin{gather*}
\sum_{t=0}^w (e_{u+t} - e_{u+l+t}) = \sum_{t=0}^w (e_{u+t} - e_{u+l-1+t}) - e_{u+l+w} + e_{u+l-1} \\ > -(n+1)-e_{u+l+w} \geq -(n+2).
\end{gather*}
This shows that $\epsilon_{\mathcal{O}_{l+1}}$ does not contain a free linear segment of level $n+2$. Note that this claim is true even when $l=r$.

For all $0 \leq n < n_{\mathcal{M}}-1$, as $\gamma_{\mathcal{M}}(n+1) > \gamma_{\mathcal{M}}(n)$, there exists $\mathcal{O} \in \mathcal{B}_{\pi \times \pi}$ such that $a_{n+1}({\epsilon}_{\mathcal{O}}) > 0$. Let $1 < l_0 \leq r$ be the smallest number such that $\epsilon_{\mathcal{O}_{l_0}}$ contains a free linear segment of level $n+1$. As $\mathcal{O} = \mathcal{O}_s$ for some $1 \leq s \leq r$ and $a_{n+1}(\epsilon_{\mathcal{O}}) > 0$, we know that such an $l_0$ exists. If $\epsilon_{\mathcal{O}_{l_0}}$ also contains a free linear segment of level $n+2$, then $\epsilon_{\mathcal{O}_{l_0-1}}$ contains a free linear segment of level $n+1$ by the claim above and this contradicts the minimal property of $l_0$. Hence $\epsilon_{\mathcal{O}_{l_0}}$ does not contain a free linear segment of level $n+2$. We have found a free linear segment of level $n+1$ that is not in any free linear segment of level $n+2$ in $\epsilon_{\mathcal{O}_{l_0}}$. Therefore, $a_{n+2}(\epsilon_{\mathcal{O}_{l_0}}) < a_{n+1}(\epsilon_{\mathcal{O}_{l_0}})$. This completes the proof of the theorem.
\end{proof}

\begin{proposition} \label{proposition:smallimplieslarge}
Let $\mathcal{M}_{\pi}$ be an $F$-cyclic $F$-crystal. Let $\pi = \prod_{i=1}^t \pi_i$ be a decomposition of $\pi$ into disjoint permutations $\pi_i$ and let $\mathcal{M}_{\pi} = \bigoplus_{i=1}^t \mathcal{M}_{\pi_i}$ be the direct sum decomposition of $\mathcal{M}_{\pi}$ into $F$-cyclic $F$-crystals that correspond to $\pi = \prod_{i=1}^t \pi_i$. Let $n \geq 1$ be an integer. If there exists $i_0 \in \{1, 2, \dots, t\}$ such that we have  
\[\gamma_{\mathcal{M}_{\pi_{i_0}}}(n+1)-\gamma_{\mathcal{M}_{\pi_{i_0}}}(n) < \gamma_{\mathcal{M}_{\pi_{i_0}}}(n) - \gamma_{\mathcal{M}_{\pi_{i_0}}}(n-1),\] then we also have \[\gamma_{\mathcal{M}_{\pi}}(n+1)-\gamma_{\mathcal{M}_{\pi}}(n) < \gamma_{\mathcal{M}_{\pi}}(n) - \gamma_{\mathcal{M}_{\pi}}(n-1).\]
\end{proposition}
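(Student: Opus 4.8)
The plan is to reduce the statement to the per-orbit formula of Theorem~\ref{theorem:main} together with the monotonicity bound $a_{\lambda}(\epsilon)\le a_{\lambda-1}(\epsilon)$ of Corollary~\ref{corollary:freelinearsegmentcontains}. The key point is that, although $\gamma$ is \emph{not} additive under direct sums of $F$-crystals (the cross $\mathrm{Hom}$-terms contribute), the discrete second difference $n\mapsto\bigl(\gamma(n+1)-\gamma(n)\bigr)-\bigl(\gamma(n)-\gamma(n-1)\bigr)$ is, by Theorem~\ref{theorem:main}, a sum of per-orbit terms each of which is $\le 0$; so discarding the orbits not coming from $\pi_{i_0}$ can only make it larger, and one of the retained terms is already strictly negative.

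First I would fix notation: write $I_r=\bigsqcup_{i=1}^{t}I^{(i)}$ with $I^{(i)}$ the support of $\pi_i$. Since $\pi$ preserves each $I^{(i)}$, the product $\pi\times\pi$ preserves each $I^{(i)}\times I^{(j)}$ and acts on it as $\pi_i\times\pi_j$; hence $\mathcal{B}_{\pi\times\pi}=\bigsqcup_{i,j}\mathcal{B}_{\pi_i\times\pi_j}$, and in particular $\mathcal{B}_{\pi_{i_0}\times\pi_{i_0}}$ sits inside $\mathcal{B}_{\pi\times\pi}$. For an orbit $\mathcal{O}\subseteq I^{(i_0)}\times I^{(i_0)}$, the Hodge slopes of $\mathcal{M}_{\pi_{i_0}}$ being the restriction to $I^{(i_0)}$ of those of $\mathcal{M}_{\pi}$, the associated sequences $\epsilon_{\mathcal{O}}$ and $\widetilde{\epsilon}_{\mathcal{O}}$, and hence all the numbers $a_{\lambda}(\widetilde{\epsilon}_{\mathcal{O}})$, are the same whether computed inside $\mathcal{M}_{\pi}$ or inside $\mathcal{M}_{\pi_{i_0}}$.

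Next I would compute the second difference. By Theorem~\ref{theorem:main}, $\gamma_{\mathcal{M}_{\pi}}(m)=\sum_{\mathcal{O}\in\mathcal{B}_{\pi\times\pi}}\sum_{\lambda=1}^{m}a_{\lambda}(\widetilde{\epsilon}_{\mathcal{O}})$, so $\gamma_{\mathcal{M}_{\pi}}(m+1)-\gamma_{\mathcal{M}_{\pi}}(m)=\sum_{\mathcal{O}}a_{m+1}(\widetilde{\epsilon}_{\mathcal{O}})$, and therefore
\[
\bigl(\gamma_{\mathcal{M}_{\pi}}(n+1)-\gamma_{\mathcal{M}_{\pi}}(n)\bigr)-\bigl(\gamma_{\mathcal{M}_{\pi}}(n)-\gamma_{\mathcal{M}_{\pi}}(n-1)\bigr)=\sum_{\mathcal{O}\in\mathcal{B}_{\pi\times\pi}}\bigl(a_{n+1}(\widetilde{\epsilon}_{\mathcal{O}})-a_{n}(\widetilde{\epsilon}_{\mathcal{O}})\bigr),
\]
and the analogous identity holds for $\mathcal{M}_{\pi_{i_0}}$ with the sum restricted to $\mathcal{B}_{\pi_{i_0}\times\pi_{i_0}}$. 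Since $n\ge 1$, Corollary~\ref{corollary:freelinearsegmentcontains} gives $a_{n+1}(\widetilde{\epsilon}_{\mathcal{O}})-a_{n}(\widetilde{\epsilon}_{\mathcal{O}})\le 0$ for every orbit $\mathcal{O}$ (with the all-zero sequences contributing $0$). Hence the sum over $\mathcal{B}_{\pi\times\pi}$ is bounded above by the sum over the subset $\mathcal{B}_{\pi_{i_0}\times\pi_{i_0}}$, and the latter equals $\bigl(\gamma_{\mathcal{M}_{\pi_{i_0}}}(n+1)-\gamma_{\mathcal{M}_{\pi_{i_0}}}(n)\bigr)-\bigl(\gamma_{\mathcal{M}_{\pi_{i_0}}}(n)-\gamma_{\mathcal{M}_{\pi_{i_0}}}(n-1)\bigr)<0$ by hypothesis; this is exactly the desired inequality.

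There is no substantial obstacle here; the points that need care are (i) checking that $\mathcal{B}_{\pi_{i_0}\times\pi_{i_0}}$ genuinely embeds in $\mathcal{B}_{\pi\times\pi}$ compatibly with all the $\epsilon$-data, which is handled by the disjoint-support observation above, and (ii) confirming that Corollary~\ref{corollary:freelinearsegmentcontains} applies to each $\widetilde{\epsilon}_{\mathcal{O}}$, including degenerate cases (a single-vertex free linear digraph, or an all-zero sequence), which is immediate from Definition~\ref{definition:freelinearsegment} and Remark~\ref{remark:0dontmatterina}.
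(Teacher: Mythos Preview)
Your proof is correct and follows essentially the same approach as the paper: both use Theorem~\ref{theorem:main} to write the second difference as $\sum_{\mathcal{O}}\bigl(a_{n+1}(\widetilde{\epsilon}_{\mathcal{O}})-a_{n}(\widetilde{\epsilon}_{\mathcal{O}})\bigr)$, invoke the embedding $\mathcal{B}_{\pi_{i_0}\times\pi_{i_0}}\subset\mathcal{B}_{\pi\times\pi}$, and rely on Corollary~\ref{corollary:freelinearsegmentcontains} for the termwise nonpositivity. The only cosmetic difference is that the paper first isolates a single orbit with $a_{n+1}<a_n$ and then appeals to Theorem~\ref{theorem:main}, whereas you bound the full sum by the subsum directly; your packaging is arguably cleaner and makes the use of Corollary~\ref{corollary:freelinearsegmentcontains} explicit.
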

\begin{proof}
We consider the disjoint union $I_r = \sqcup_{i=1}^t J_i$ such that each $\pi_i$ is a permutation on $J_i$ for $i \in I_t$. Let $\pi_{i_0}$ be the permutation on $J_{i_0} \subset I_r$ and let $\mathcal{B}_{\pi _{i_0}\times \pi_{i_0}}$ be the set of orbits of $\pi_{i_0} \times \pi_{i_0}$ on $J_{i_0}^2$. As
\begin{equation*}
\begin{aligned}
\sum_{\mathcal{O} \in \mathcal{B}_{\pi _{i_0}\times \pi_{i_0}}} a_{n+1}(\widetilde{\epsilon}_{\mathcal{O}}) = & \gamma_{\mathcal{M}_{i_0}}(n+1)-\gamma_{\mathcal{M}_{i_0}}(n) \\ 
&< \gamma_{\mathcal{M}_{i_0}}(n) - \gamma_{\mathcal{M}_{i_0}}(n-1) = \sum_{\mathcal{O} \in \mathcal{B}_{\pi _{i_0}\times \pi_{i_0}}} a_{n}(\widetilde{\epsilon}_{\mathcal{O}}),
\end{aligned}
\end{equation*}
there exists $\mathcal{O} \in \mathcal{B}_{\pi _{i_0} \times \pi_{i_0}}$ such that $a_{n+1}(\widetilde{\epsilon}_{\mathcal{O}}) < a_{n}(\widetilde{\epsilon}_{\mathcal{O}})$. As $\mathcal{B}_{\pi _{i_0} \times \pi_{i_0}} \subset \mathcal{B}_{\pi \times \pi}$, we know that there exists $\mathcal{O} \in \mathcal{B}_{\pi \times \pi}$ such that $a_{n+1}(\widetilde{\epsilon}_{\mathcal{O}}) < a_{n}(\widetilde{\epsilon}_{\mathcal{O}})$. From this and Theorem \ref{theorem:main}, we get that 
\[\gamma_{\mathcal{M}_{\pi}}(n+1)-\gamma_{\mathcal{M}_{\pi}}(n) < \gamma_{\mathcal{M}_{\pi}}(n) - \gamma_{\mathcal{M}_{\pi}}(n-1). \qedhere\]
\end{proof}

Before we prove the next corollary, we recall the notion minimal Dieudonn\'e module after \cite[1.5.1 Definition]{Vasiu:reconstructing}. Let $\mathcal{M}$ be an isoclinic Dieudonn\'e module, i.e., its Newton polygon is a straight line. Let $\mathcal{E} = (e_1, e_2, \dots, e_r)$ be the sequence of Hodge slopes of $\mathcal{M}$. Let $\lambda_{\mathcal{M}} = \sum_{i=1}^r e_i/r$ be the unique Newton slope. We say that $\mathcal{M} = \mathcal{M}_{\pi}$ is isoclinic minimal if there exists an ordered $W(k)$-basis $(v_1, v_2, \dots, v_r)$ of $M$ such that for all for all $1 \leq i, q \leq r$ we have
\[\varphi_{\pi}^q(v_i) = p^{\lfloor q\lambda_{\mathcal{M}} \rfloor + \epsilon_q(i)}v_{\pi^q(i)},\]
where $\epsilon_q(i) \in \{0,1\}$. In general, we say that $\mathcal{M}$ is minimal if it is a direct sum of isoclinic minimal Dieudonn\'e modules. A Dieudonn\'e module $\mathcal{M}$ is minimal if and only if $n_{\mathcal{M}} \leq 1$ (\cite[1.6 Main Theorem B]{Vasiu:reconstructing}).

\begin{corollary} \label{corollary:strictinequalityfcyclic}
If $\mathcal{M}$ is a nonordinary $F$-cyclic Dieudonn\'e module, then we have $\gamma_{\mathcal{M}}(2) - \gamma_{\mathcal{M}}(1) < \gamma_{\mathcal{M}}(1) - \gamma_{\mathcal{M}}(0) = \gamma_{\mathcal{M}}(1)$.
\end{corollary}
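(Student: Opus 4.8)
The plan is to deduce this from the $F$-circular case (Theorem \ref{theorem:strictinequalitycircular}) by passing through a direct sum decomposition and then invoking Proposition \ref{proposition:smallimplieslarge}. First I would write $\pi = \prod_{i=1}^t \pi_i$ as a product of disjoint cycles on $I_r$; the associated decomposition $\mathcal{M} = \mathcal{M}_\pi = \bigoplus_{i=1}^t \mathcal{M}_{\pi_i}$ exhibits $\mathcal{M}$ as a direct sum of $F$-circular Dieudonn\'e modules (each $\pi_i$ being a cycle on its support $J_i$, of length equal to the rank of $\mathcal{M}_{\pi_i}$). Since a direct sum of ordinary $p$-divisible groups is again ordinary, and $\mathcal{M}$ is nonordinary, at least one summand $\mathcal{M}_{\pi_{i_0}}$ is nonordinary. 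A cycle of length $1$ gives a rank-one Dieudonn\'e module, which is either $\mathbb{Q}_p/\mathbb{Z}_p$ or $\mu_{p^\infty}$ and hence ordinary; therefore $\pi_{i_0}$ has length at least $2$, so $\mathcal{M}_{\pi_{i_0}}$ is a nonordinary $F$-circular Dieudonn\'e module of rank $\ge 2$, and in particular $\gamma_{\mathcal{M}_{\pi_{i_0}}}(1) > 0$ and $n_{\mathcal{M}_{\pi_{i_0}}} \ge 1$.

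Next I would establish the base inequality for this summand, namely
\[\gamma_{\mathcal{M}_{\pi_{i_0}}}(2) - \gamma_{\mathcal{M}_{\pi_{i_0}}}(1) < \gamma_{\mathcal{M}_{\pi_{i_0}}}(1) - \gamma_{\mathcal{M}_{\pi_{i_0}}}(0).\]
If $n_{\mathcal{M}_{\pi_{i_0}}} \ge 2$, this is precisely the leftmost strict inequality in the chain furnished by Theorem \ref{theorem:strictinequalitycircular}. If instead $n_{\mathcal{M}_{\pi_{i_0}}} = 1$, that chain is vacuous, and one argues directly: applying Theorem \ref{theorem:vasiugabberbasic} to the $p$-divisible group of $\mathcal{M}_{\pi_{i_0}}$ gives $\gamma_{\mathcal{M}_{\pi_{i_0}}}(1) = \gamma_{\mathcal{M}_{\pi_{i_0}}}(2) = \cdots$, so that $\gamma_{\mathcal{M}_{\pi_{i_0}}}(2) - \gamma_{\mathcal{M}_{\pi_{i_0}}}(1) = 0 < \gamma_{\mathcal{M}_{\pi_{i_0}}}(1) = \gamma_{\mathcal{M}_{\pi_{i_0}}}(1) - \gamma_{\mathcal{M}_{\pi_{i_0}}}(0)$, using $\gamma_{\mathcal{M}_{\pi_{i_0}}}(0) = 0$. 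In either case the displayed inequality holds.

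Finally I would apply Proposition \ref{proposition:smallimplieslarge} with $n = 1$ to the decomposition $\pi = \prod_i \pi_i$: the inequality just verified at the index $i_0$ yields $\gamma_{\mathcal{M}}(2) - \gamma_{\mathcal{M}}(1) < \gamma_{\mathcal{M}}(1) - \gamma_{\mathcal{M}}(0) = \gamma_{\mathcal{M}}(1)$, as desired. The only genuinely delicate step is the edge case $n_{\mathcal{M}_{\pi_{i_0}}} = 1$, where Theorem \ref{theorem:strictinequalitycircular} carries no information and one must replace it by the flat tail of the Gabber--Vasiu sequence together with the strict positivity $\gamma_{\mathcal{M}_{\pi_{i_0}}}(1) > 0$ coming from nonordinariness; the remaining ingredients are essentially formal, namely that ordinariness is inherited by the summands $\mathcal{M}_{\pi_i}$ and that rank-one Dieudonn\'e modules are ordinary.
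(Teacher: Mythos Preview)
Your proof is correct and follows the same overall strategy as the paper: decompose $\pi$ into cycles, locate a summand $\mathcal{M}_{\pi_{i_0}}$ for which the strict inequality at level $n=1$ already holds, and then invoke Proposition~\ref{proposition:smallimplieslarge}. The difference lies in how the good summand is singled out. The paper organizes the argument around \emph{minimality}: it first disposes of the minimal case globally (where $n_{\mathcal{M}}=1$ forces $\gamma_{\mathcal{M}}(2)=\gamma_{\mathcal{M}}(1)$), and in the non-minimal case appeals to an external result \cite[Theorem 1.2]{Xiao:minimal} to produce a non-minimal $F$-circular summand, which then has $n_{\mathcal{M}_{\pi_{i_0}}}\ge 2$ so that Theorem~\ref{theorem:strictinequalitycircular} applies directly. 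You instead use the more elementary notion of \emph{ordinariness}: a nonordinary direct sum must have a nonordinary summand, rank-one summands are automatically ordinary, and the residual case $n_{\mathcal{M}_{\pi_{i_0}}}=1$ is handled by the flat tail from Theorem~\ref{theorem:vasiugabberbasic} together with $\gamma_{\mathcal{M}_{\pi_{i_0}}}(1)>0$. Your route is slightly more self-contained (it avoids both the notion of minimal Dieudonn\'e modules and the citation to \cite{Xiao:minimal}) and more uniform (no separate treatment of the $F$-circular case); the paper's route has the advantage that once the non-minimal summand is found, no case split on $n_{\mathcal{M}_{\pi_{i_0}}}$ is needed.
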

\begin{proof}
If $\mathcal{M}$ is minimal and nonordinary, then $n_{\mathcal{M}} = 1$ (\cite[1.6 Main Theorem B]{Vasiu:reconstructing}) and thus $\gamma_{\mathcal{M}}(2) - \gamma_{\mathcal{M}}(1) = 0 < \gamma_{\mathcal{M}}(1)$.

If $\mathcal{M}$ is not minimal, then by Theorem \ref{theorem:strictinequalitycircular}, we can assume that $\mathcal{M}$ is not $F$-circular. Let $\mathcal{M} = \mathcal{M}_{\pi}$ and $\pi = \prod_{i=1}^t \pi_i$ be a decomposition of $\pi$ into at least two disjoint cycles and let $\mathcal{M}_{\pi_i}$ be the direct summand of $\mathcal{M}$ defined by $\pi_i$. There exists at least one $i_0 \in \{1, 2, \dots, t\}$ such that $\mathcal{M}_{\pi_{i_0}}$ is not minimal by \cite[Theorem 1.2]{Xiao:minimal}.

As $\mathcal{M}_{\pi_{i_0}}$ is not minimal, we know that $n_{\mathcal{M}} \geq 2$ (\cite[1.6 Main Theorem B]{Vasiu:reconstructing}) and thus $\gamma_{\mathcal{M}_{\pi_{i_0}}}(2) > \gamma_{\mathcal{M}_{\pi_{i_0}}}(1)$ by Theorem \ref{theorem:vasiugabberbasic}. From Theorem \ref{theorem:strictinequalitycircular}, we get that $\gamma_{\mathcal{M}_{\pi_{i_0}}}(2) - \gamma_{\mathcal{M}_{\pi_{i_0}}}(1) < \gamma_{\mathcal{M}_{\pi_{i_0}}}(1)$. Hence $\gamma_{\mathcal{M}_{\pi}}(2) - \gamma_{\mathcal{M}_{\pi}}(1) < \gamma_{\mathcal{M}_{\pi}}(1)$ by Proposition \ref{proposition:smallimplieslarge}.
\end{proof}

\begin{corollary}
Let $\mathcal{M}$ be a nonordinary $F$-cyclic Dieudonn\'e module. For every $i > j \geq 1$, we have $\gamma_{\mathcal{M}}(i)/\gamma_{\mathcal{M}}(j) < i/j.$
\end{corollary}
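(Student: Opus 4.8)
The plan is to deduce the ratio bound from the behaviour of the (nonincreasing, nonnegative integer) difference sequence $d_\lambda := \gamma_{\mathcal{M}}(\lambda)-\gamma_{\mathcal{M}}(\lambda-1)$ for $\lambda\geq 1$, together with the single strict drop $d_1>d_2$ furnished by Corollary \ref{corollary:strictinequalityfcyclic}. First I would collect the inputs: writing $\gamma_{\mathcal{M}}(m)=\sum_{\lambda=1}^m d_\lambda$, the sequence $(d_\lambda)_{\lambda\geq 1}$ consists of nonnegative integers and is nonincreasing---this is Theorem \ref{theorem:vasiugabberbasic} applied to the $p$-divisible group with Dieudonn\'e module $\mathcal{M}$, or equivalently it follows from Theorem \ref{theorem:main} and Corollary \ref{corollary:freelinearsegmentcontains} since $d_\lambda=\sum_{\mathcal{O}\in\mathcal{B}_{\pi \times \pi}}a_\lambda(\widetilde{\epsilon}_{\mathcal{O}})$. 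Since $\mathcal{M}$ is nonordinary, $d_1=\gamma_{\mathcal{M}}(1)>0$, so every $\gamma_{\mathcal{M}}(j)$ with $j\geq 1$ is positive and the ratios in the statement are defined. Corollary \ref{corollary:strictinequalityfcyclic} gives $d_1>d_2$.

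Next, fix integers $i>j\geq 1$; it suffices to prove $j\,\gamma_{\mathcal{M}}(i)<i\,\gamma_{\mathcal{M}}(j)$, which I would establish by the estimate
\[
j\,\gamma_{\mathcal{M}}(i)-i\,\gamma_{\mathcal{M}}(j)=j\sum_{\lambda=j+1}^{i}d_\lambda-(i-j)\sum_{\lambda=1}^{j}d_\lambda .
\]
Monotonicity gives $\sum_{\lambda=j+1}^{i}d_\lambda\leq(i-j)\,d_{j+1}$, while
\[
\sum_{\lambda=1}^{j}d_\lambda=d_1+\sum_{\lambda=2}^{j}d_\lambda\;\geq\;d_1+(j-1)d_j\;>\;d_2+(j-1)d_{j+1}\;\geq\;j\,d_{j+1},
\]
where the strict step uses $d_1>d_2$ and the remaining steps use monotonicity. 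Substituting these two bounds and using $i-j>0$ (so that combining a non-strict with a strict estimate again yields a strict one), the right-hand side is
\[
j\,\gamma_{\mathcal{M}}(i)-i\,\gamma_{\mathcal{M}}(j)\;<\;j(i-j)d_{j+1}-(i-j)\,j\,d_{j+1}=0,
\]
and dividing by $j\,\gamma_{\mathcal{M}}(j)>0$ yields $\gamma_{\mathcal{M}}(i)/\gamma_{\mathcal{M}}(j)<i/j$.

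There is no serious obstacle here; the only delicate point is the bookkeeping with strict versus non-strict inequalities, so that the one strict drop $d_1>d_2$ survives after it is combined with the non-strict monotonicity bounds, and checking that the displayed chain of estimates for $\sum_{\lambda=1}^{j}d_\lambda$ degenerates correctly in the boundary case $j=1$, where it reduces to $d_1>d_2$ itself.
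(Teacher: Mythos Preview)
Your argument is correct. You use the same two inputs as the paper---the nonincreasing differences $d_\lambda$ (Theorem \ref{theorem:vasiugabberbasic}) and the single strict drop $d_1>d_2$ (Corollary \ref{corollary:strictinequalityfcyclic})---but you combine them differently. The paper proceeds by a nested double induction: first it proves $\gamma_{\mathcal{M}}(i)<i\,\gamma_{\mathcal{M}}(1)$ by induction on $i$, and then it inducts on $j$, with a further induction on $i>j$ at each step, using inequalities of the form $\gamma_{\mathcal{M}}(i+1)-\gamma_{\mathcal{M}}(l+1)\leq\gamma_{\mathcal{M}}(i)-\gamma_{\mathcal{M}}(l)$. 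Your route is more direct: you write out $j\,\gamma_{\mathcal{M}}(i)-i\,\gamma_{\mathcal{M}}(j)$ explicitly in terms of the $d_\lambda$ and bound the two resulting sums in one shot, isolating $d_1$ so that the strict drop $d_1>d_2$ survives. This avoids the induction bookkeeping entirely and makes the role of the single strict inequality more transparent; the paper's approach, on the other hand, is closer in spirit to how the monotonicity result is stated (via the sequences $\mathcal{S}_D(l)$) and generalizes slightly more mechanically if one wanted to iterate similar ratio bounds.
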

\begin{proof}
Let $j=1$ and we prove that $\gamma_{\mathcal{M}}(i)/\gamma_{\mathcal{M}}(1) < i$ by induction on $i \geq 2$. The base step when $i=2$ follows from Corollary \ref{corollary:strictinequalityfcyclic}. Suppose that $\gamma_{\mathcal{M}}(l)/\gamma_{\mathcal{M}}(1) < l$ for some $l \geq 2$. As $\gamma_{\mathcal{M}}(l+1) - \gamma_{\mathcal{M}}(1) \leq \gamma_{\mathcal{M}}(l)$ by \cite[Proposition 2.11]{Xiao:vasiuconjecture}, we have $\gamma_{\mathcal{M}}(l+1) \leq \gamma_{\mathcal{M}}(l) + \gamma_{\mathcal{M}}(1) < (l+1) \gamma_{\mathcal{M}}(1)$ by induction.

We now prove $\gamma_{\mathcal{M}}(i)/\gamma_{\mathcal{M}}(j) < i/j$ by induction on $j \geq 1$. The base step when $j=1$ follows from the last paragraph. Suppose that $\gamma_{\mathcal{M}}(i)/\gamma_{\mathcal{M}}(l) < i/l$ for some $l \geq 1$ and for all $i > l$. We want to show that $\gamma_{\mathcal{M}}(i)/\gamma_{\mathcal{M}}(l+1) < i/(l+1)$ for all $i > l+1$.

Base step: Suppose $i=l+2$. Because $\gamma_{\mathcal{M}}(l+2)-\gamma_{\mathcal{M}}(l+1) \leq \gamma_{\mathcal{M}}(l+1) - \gamma_{\mathcal{M}}(l)$, we have, $\gamma_{\mathcal{M}}(l+2)/\gamma_{\mathcal{M}}(l+1) \leq 2 - \gamma_{\mathcal{M}}(l)/\gamma_{\mathcal{M}}(l+1)$. By the inductive hypothesis, we have $\gamma_{\mathcal{M}}(l+1)/\gamma_{\mathcal{M}}(l) < (l+1)/l$, therefore $\gamma_{\mathcal{M}}(l+2)/\gamma_{\mathcal{M}}(l+1) < 2-l/(l+1)= (l+2)/(l+1).$

Inductive step: Suppose that $\gamma_{\mathcal{M}}(i)/\gamma_{\mathcal{M}}(l+1) < i/(l+1)$ for some $i > l+1$. As $\gamma_{\mathcal{M}}(i+1)-\gamma_{\mathcal{M}}(l+1) \leq \gamma_{\mathcal{M}}(i) - \gamma_{\mathcal{M}}(l)$, we have
\[\frac{\gamma_{\mathcal{M}}(i+1)}{\gamma_{\mathcal{M}}(l+1)} \leq 1 + \frac{\gamma_{\mathcal{M}}(i)}{\gamma_{\mathcal{M}}(l+1)} - \frac{\gamma_{\mathcal{M}}(l)}{\gamma_{\mathcal{M}}(l+1)} < 1 + \frac{i}{l+1} - \frac{l}{l+1} = \frac{i+1}{l+1}.\]
This completes the proof of the corollary.
\end{proof}

\begin{example} \label{example:counterexamplefcrystal}
Let $\mathcal{M}_{\pi} = (M, \varphi_{\pi})$ be an $F$-circular $F$-crystal of rank $2$, where $M$ has an ordered $W(k)$-basis $(v_1, v_2)$, $\pi = (1,2)$ and $(e_1,e_2) = (0,e)$, where $e \geq 2$ is an integer. Hence $\varphi(v_1)=v_2, \varphi(v_2) = p^ev_1$. There are two orbits of $\pi \times \pi$ on $I_2^2$ with
\[\epsilon_{\mathcal{O}_1} = (0,0), \qquad \epsilon_{\mathcal{O}_2} = (-e,e).\]
It is easy to compute $a_{\lambda}(\epsilon_{\mathcal{O}_1}) = 0$ for all $\lambda \geq 1$, $a_{\lambda}(\epsilon_{\mathcal{O}_2}) = 1$ for all $1 \leq \lambda \leq e$. Therefore $\gamma_{\mathcal{M}}(0) = 0$, $\gamma_{\mathcal{M}}(i) = i$ for all $1 \leq i \leq e$, and $\gamma_{\mathcal{M}}(i) = e$ for all $i \geq e$. Hence $\gamma_{\mathcal{M}}(n+1) - \gamma_{\mathcal{M}}(n) = 1$ for all $0 \leq n < e$. Moreover $n_{\mathcal{M}} = e$ (\cite[1.5.2 Theorem]{Vasiu:reconstructing}). Thus
\[\gamma_{\mathcal{M}}(2) - \gamma_{\mathcal{M}}(1) = \gamma_{\mathcal{M}}(3) - \gamma_{\mathcal{M}}(2) = \cdots = \gamma_{\mathcal{M}}(n_{\mathcal{M}}) - \gamma_{\mathcal{M}}(n_{\mathcal{M}}-1) > 0,\]
which shows that in Theorem \ref{theorem:strictinequalitycircular} the assumption that $\mathcal{M}$ is a Dieudonn\'e module is necessary.
\end{example}

\section*{Statement of Name Change}
The case where $\mathcal{M}_{\pi}$ is a Dieudonn\'e module in the main result was done in the dissertation \cite{Ding:thesis} of the first author. Since then, he has changed his name from Ding Ding to Zeyu Ding.

\section*{Acknowledgment}
The authors would like to thank Adrian Vasiu for many suggestions during the preparation of this paper and to the first draft of this paper. The first author would like to thank Adrian Vasiu for introducing him to $p$-divisible groups and for his guidance, encouragement and support throughout his doctoral studies at Binghamton University.

\bibliographystyle{amsplain}
\bibliography{references}

\end{document}